  \def\(#1\){#1}
\definecolor{codegreen}{rgb}{0,0.6,0}
\definecolor{codegray}{rgb}{0.5,0.5,0.5}
\definecolor{codepurple}{rgb}{0.58,0,0.82}
\definecolor{backcolour}{rgb}{0.95,0.95,0.92}
\lstdefinestyle{mystyle}{
    backgroundcolor=\color{backcolour},   
    commentstyle=\color{codegreen},
    keywordstyle=\color{magenta},
    numberstyle=\tiny\color{codegray},
    stringstyle=\color{codepurple},
    basicstyle=\ttfamily\footnotesize,
    breakatwhitespace=false,         
    breaklines=true,                 
    captionpos=b,                    
    keepspaces=true,                 
    numbers=left,                    
    numbersep=5pt,                  
    showspaces=false,                
    showstringspaces=false,
    showtabs=false,                  
    tabsize=2
}
\newcommand{\ones}{\mathbf 1}
\newcommand{\reals}{\mathbb{R}}
\newcommand{\naturals}{\mathbb{N}}
\newcommand{\id}{\mathrm{Id}}
\newcommand{\X}{{\mathcal{X}}}
\newcommand{\Prob}{\mathbb{P}}
\pgfplotsset{compat=1.15}
\theoremstyle{plain}
\newtheorem{theorem}{Theorem}[section]
\newtheorem{lemma}[theorem]{Lemma}
\newtheorem{proposition}[theorem]{Proposition}
\newtheorem{conjecture}[theorem]{Conjecture}
\newtheorem{question}[theorem]{Question}
\theoremstyle{definition}
\newtheorem{definition}[theorem]{Definition}
\newtheorem{remark}[theorem]{Remark}
\newtheorem{remark*}{Remark}
\title{Limit profiles and cutoff for the Burnside process on Sylow double cosets}
\author{Michael Howes}
\begin{document}
\maketitle
\begin{abstract}
    This article gives sharp estimates for the mixing time of the Burnside process for Sylow \(p\)-double cosets in the symmetric group \(S_n\). This process is a Markov chain on \(S_n\) which can be used to uniformly sample Sylow \(p\)-double cosets. The analysis applies when \(n = pk\) with \(p\) prime and \(k < p\). The main result describes the limit profile of the distance to the stationary distribution as \(p\) goes to infinity. From the limit profile, we get the following two corollaries. First, if \(k\) remains fixed as \(p \to \infty\), then order \(p\) steps are necessary and sufficient for mixing and cutoff does not occur. Second, if \(k \to \infty\) as \(p \to \infty\), then cutoff occurs at \(p \log k\) with a window of size \(p\). The limit profile is derived from explicit upper and lower bounds on the distance between the Burnside process and its stationary distribution. These non-asymptotic bounds give very accurate approximations even for \(p\) as small as \(11\).
\end{abstract}

\section{Introduction and main results}

The Burnside process is a Markov chain Monte Carlo algorithm for uniformly sampling orbits from a group action~\cite{jerrum1993uniform}. Recent works have shown empirically that the Burnside process mixes quickly in a number of complex and interesting examples \cite{bartholdi2024algorithm,diaconis2025random,diaconis2025counting}. This paper is perhaps the first to provide a sharp analysis of the mixing time of an instance of the Burnside process for double cosets.

To describe the Burnside process, let \(\mathcal{X}\) be a finite set and let \(G\) be a finite group acting on \(\mathcal{X}\) with \(x^g\) denoting the image of \(g\) acting on \(x\). The set \(\X\) splits into disjoint orbits
\[
    \X = \mathcal{O}_{x_1} \sqcup \mathcal{O}_{x_2} \sqcup \cdots \sqcup \mathcal{O}_{x_Z},
\]
where \(\mathcal{O}_x = \{x^g : g \in G\}\) is the orbit containing \(x\) and \(Z\) is the number of orbits. The Burnside process is a Markov chain on \(\X\) with stationary distribution \(\pi(x) = \frac{1}{Z|\mathcal{O}_x|}\). Thus, sampling \(X\) from \(\pi\) and reporting the orbit containing \(X\) gives a way of uniformly sampling an orbit. Furthermore, taking many steps of the Burnside produces random elements of \(\X\) that are approximately distributed according to \(\pi\).

The Burnside process transitions from \(x\) to \(y\) according to the following two-step procedure:
\begin{enumerate}
    \item From \(x \in \mathcal{X}\), choose \(g\) uniformly from the set \(G_x = \{g \in G : x^g = x\}\).
    \item Given \(g\), choose \(y\) uniformly from the set \(\X^g = \{y \in \X : y^g = y\}\).
\end{enumerate}
The probability of transitioning from \(x\) to \(y\) is thus given by the following transition kernel
\begin{equation}\label{eq:kernel}
    P(x,y) = \frac{1}{|G_x|} \sum_{g \in G_x \cap G_y} \frac{1}{|\X^g|},
\end{equation}
The kernel \(P\) is ergodic and reversible with respect to \(\pi\) and thus \(\pi\) is the unique stationary distribution of \(P\)~\cite[Theorem~2.1]{jerrum1993uniform}. For \(x \in \X\) and \(t \ge 1\) we will write \(P^t_x\) for the distribution on \(\X\) that corresponds to starting at \(x\) and taking \(t\) steps of the Burnside process. That is for any \(y \in \X\),
\[
    P_x^1(y) =P_x(y)= P(x,y) \quad \text{and} \quad P_x^t(y) = \sum_{z \in \X} P^{t-1}_x(z)P_z(y).
\]
This paper focuses on a particular case of the Burnside process called the \emph{Sylow--Burnside process}. In this example, \(\X = S_n\) is the symmetric group of degree \(n\) and \(G\) is the product group \(H \times H\) where \(H \subseteq S_n\) is a Sylow \(p\)-subgroup with \(p\) prime. The product \(H \times H\) acts on \(S_{n}\) by
\[
    \sigma^{h,g}=h^{-1}\sigma g.
\]
Under this group action the orbits are Sylow \(p\)-double cosets and the Sylow--Burnside process can be used to uniformly sample double cosets. Background and motivation for studying this example is given in Section~\ref{sec:sylow-background}.

If \(P\) is an ergodic transition kernel, then the distribution \(P_x^t\) converges to \(\pi\) as \(t\) goes to infinity for any \(x \in \X\). To understand how quickly \(P_x^t\) converges to \(\pi\),  \emph{total variation distance} is typically used. The total variation distance between two probability measures \(\mu\) and \(\nu\) on a finite set \(\X\) is given by
\[
    \Vert \mu - \nu \Vert_{\mathrm{TV}} = \sup_{A \subseteq \X}|\mu(A) - \nu(A)|=\frac{1}{2}\sum_{y \in \X}|\mu(x)-\nu(x)| = \sum_{y \in \X}(\mu(x)-\nu(x))_+.
\]
The field of Markov chain mixing is focused on proving upper bounds on \(\Vert P^t_x - \pi \Vert_{\mathrm{TV}}\) that depend on both \(t\) and parameters of the Markov chain \(P\) (such as the size of \(\X\)). The worst-case value of \(\Vert P^t_x - \pi \Vert_{\mathrm{TV}}\) (as a function of \(x\)) is denoted by \(d(t)\). That is, for \(t \in \naturals\),
\begin{align}\label{eq:dt}
    d(t) & = \max_{x \in \X}\Vert P^t_x - \pi \Vert_{\mathrm{TV}},
\end{align}
and \(d(t)=d(\lfloor t \rfloor)\) for noninteger \(t\) where \(\lfloor \cdot \rfloor\) is the floor function.

The goal of this paper is to approximate \(d(t)\) when \(P\) is the transition kernel for the Sylow--Burnside process. The analysis in this paper applies when \(n =pk\) with \(p\) prime and \(k < p\). The main result is the limit of \(d(t)\) as \(p,t \to \infty\) jointly. A result of this type is called a \emph{limit profile} or a \emph{cutoff profile}.
\begin{theorem}\label{thrm:mix}
    Suppose \(p=p_m\) is a diverging sequence of prime numbers, and that \(k=k_m\) is a sequence of positive integers with \(k_m < p_m\) for all \(m\). Let \(P\) be the transition kernel for the Sylow--Burnside process on \(S_{pk}\) and let \(d(t)\) be as in equation \eqref{eq:dt}.
    \begin{enumerate}
        \item If \(k_m \equiv k\) is eventually constant, then for all \(c \ge 0\),
              \begin{equation}\label{eq:fixed-k}
                  d(cp)\rightarrow 1-(1-e^{-c})^k.
              \end{equation}
        \item If \(k_m \to \infty\), then for all \(c \in \reals\),
              \begin{equation}\label{eq:cutoff}
                  d(p\log k + cp)  \to 1-\exp({-e^{-c}}). 
              \end{equation}
    \end{enumerate}
\end{theorem}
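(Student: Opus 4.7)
The plan is to derive matching upper and lower bounds on $d(t)$ that realize the two limit profiles via a comparison with a coupon-collector process built from $k$ independent geometric random variables of success probability $1/p$. First I would lay out the structural facts: since $k<p$, the Sylow $p$-subgroup $H$ equals $\langle c_1,\ldots,c_k\rangle$ for disjoint $p$-cycles $c_i$ acting on blocks $B_1,\ldots,B_k$ that partition $[n]$, and the $(H\times H)$-stabilizer of $\sigma$ is parametrized by $(a_i)_{i\in S(\sigma)}\in(\integers/p)^{S(\sigma)}$ where $S(\sigma)=\{i:\sigma(B_i)\text{ is a block}\}$. Consequently $|H\sigma H|=p^{2k-|S(\sigma)|}$ and $\pi(\sigma)=p^{|S(\sigma)|-2k}/Z$. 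The $g$-step picks $a\in(\integers/p)^{S(\sigma)}$ uniformly, and the $\tau$-step moves to $\sigma c$ for $c$ uniform in $C_{S_n}(g)=\mathrm{Sym}\bigl(\bigcup_{i\notin T}B_i\bigr)\times\bigl((\integers/p)\wr\mathrm{Sym}(T)\bigr)$, with $T=\{i\in S(\sigma):a_i\neq 0\}$ and $g=\prod c_i^{a_i}$.

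The core of the proof is a coupling between the chain and a coupon-collector process in which the $i$-th coupon is ``collected'' the first time the $g$-step chooses $a_i=0$, an event that happens independently with probability $1/p$ per step. Writing $\tau^\star=\max_{i\le k}Y_i$ for $Y_i\iid\geom(1/p)$, I would argue that the TV distance from $P_x^t$ to $\pi$ is within $o(1)$ of $\Prob(\tau^\star>t)$ in both directions. For the upper bound, identify a strong-stationary-time-style stopping rule tied to $\tau^\star$ plus a constant number of burn-in Burnside steps, showing that after all coupons are collected the distribution of $\sigma_t$ is $o(1)$-close to $\pi$; the randomization is produced by the $\mathrm{Sym}$-factor of the centralizer together with one additional step that erases residual structure. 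For the lower bound, construct a test set detecting uncollected coupons---for instance a ``sticky'' set tied to the initial alignment such as the block-preserving subgroup $S_p\wr S_k$ or a refined subset of it---and observe that under $\pi$ this set has $o(1)$ mass while the chain started from $\sigma_0$ with $|S(\sigma_0)|=k$ remains in the set with probability $\Prob(\tau^\star>t)-o(1)$. The limit profiles then follow from the extreme-value asymptotics $\Prob(\tau^\star/p>c)\to 1-(1-e^{-c})^k$ for fixed $k$, and $\Prob(\tau^\star-p\log k>cp)\to 1-\exp(-e^{-c})$ for diverging $k$.

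The main obstacle is establishing the upper bound and identifying the right sticky set for the lower bound. The naive statistic $|S(\sigma_t)|$ does not by itself evolve as a clean pure-death chain: when $|T|=k-1$ the single ``missing'' block has $|F|=p$, leaving no room for the $\mathrm{Sym}$-factor to genuinely re-randomize it, which produces a gap at $|S|=k-1$ in the reachable values. Handling this properly likely requires a refined statistic that jointly tracks $(\pi(\sigma_t),S(\sigma_t))$ together with the within-block bijections modulo the left-right action of $\langle c_i\rangle\times\langle c_i\rangle$, or equivalently an explicit coupling between two consecutive Burnside steps that exposes the coupon-collector structure. Making the upper bound quantitative enough to yield the sharp non-asymptotic bounds promised by the abstract (accurate for $p$ as small as $11$) requires careful control of every error term, including the difference between $\pi$ and the auxiliary uniform distribution on the complement of the sticky set---this comparison boils down, via inclusion-exclusion, to estimates of the form $p^k/\binom{n}{p}$ that decay exponentially whenever $k<p$.
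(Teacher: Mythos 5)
Your overall strategy matches the paper's proof in spirit: exhibit the coupon-collector structure with $k$ independent $\geom(1/p)$ variables, prove that hitting the large double cosets is (approximately) a strong stationary time, and pair this with a test set for the lower bound. The paper itself frames its argument in exactly this language. The extreme-value asymptotics you quote for $\tau^\star$ are also correct. However, the statistic you propose is not quite right, and the difficulties you flag at the end disappear once the right statistic is used. Your $S(\sigma)=\{i:\sigma(B_i)\text{ is a block}\}$ does not satisfy $|H\sigma H|=p^{2k-|S(\sigma)|}$: the condition $\sigma(B_i)$ a block is necessary but not sufficient for $\sigma\eta_i\sigma^{-1}\in H$ (one also needs $\sigma|_{B_i}$ to normalize the $p$-cycle), so the correct count is $|A(\sigma)|$ with $A(\sigma)=\{j:\sigma\eta_j\sigma^{-1}\in H\}\subseteq S(\sigma)$. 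The paper proves that the Sylow--Burnside process lumps \emph{exactly} under $T(\sigma)=2k-|A(\sigma)|$, giving a genuine Markov chain $\overline{P}$ on $\{k,\ldots,2k\}$, and that $\overline{P}$ is within $t/(p-2)!$ in total variation of an upper-triangular kernel $Q$ whose hitting time of $2k$ is literally the max of $2k-a$ independent geometrics. That exact lumping is the key structural fact you are missing; it removes the ``gap at $|S|=k-1$'' concern entirely, because the approximating chain $Q$ is defined by a one-step binomial update on the count alone, not by tracking which coupon goes.

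Two further points. The sticky set $S_p\wr S_k$ is not a workable test set: for $k=1$ it is all of $S_p$, and for general $k$ the chain can leave it as soon as two blocks fall simultaneously into the $\mathrm{Sym}$-factor. The correct test set is simply $B^c=\{\sigma:T(\sigma)<2k\}$, and the lower bound then follows for free from the elementary lumping inequality $\Vert\overline{P}^t_a-\overline{\pi}\Vert_{\mathrm{TV}}\le\Vert P^t_\sigma-\pi\Vert_{\mathrm{TV}}$ together with $\overline{\pi}(B^c)\to 0$ (most double cosets are maximal). Finally, no burn-in steps are needed for the upper bound: because $(H\times H)_\tau$ is trivial whenever $\tau$ lies in a maximal double coset, the one-step hitting distribution $P_\sigma(\cdot\mid B)$ is \emph{exactly} uniform on $B$ from any $\sigma$, so the first entrance time to $B$ is already a strong stationary time for the uniform distribution on $B$, and the only error to control is $\pi(B^c)$ — which is exactly what the quantitative double-coset counting bounds provide.
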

\begin{remark}
    \begin{enumerate}
        \item Theorem~\ref{thrm:mix} describes how fast \(t\) must grow as a function of \(p\) for \(d(t)\) to be small. If \(k\) is fixed, then taking \(t=cp\) with \(c\) large is necessary and sufficient for \(d(t)\) to be small. If \(k\) diverges with \(p\), then taking \(t = p\log k + cp\) with \(c\) large is necessary and sufficient for \(d(t)\) to be small.
        \item Equation \eqref{eq:cutoff} is an example of \emph{cutoff} \cite{diaconis1996cutoff}. If \(t = (1+\varepsilon)p\log k\), then \(d(t)\) approaches 0 and if \(t=(1-\varepsilon)p\log k\), then \(d(t)\) approaches 1. Demonstrating that a Markov chain has cutoff often requires detailed understanding of the Markov chain and its asymptotics.
        \item Theorem~\ref{thrm:mix} gives the limiting value of \(\Vert P^t_\sigma - \pi \Vert_{\mathrm{TV}}\) for the worst-case choice of \(\sigma \in S_{n}\). In the proof of Theorem~\ref{thrm:mix} we will see that this worst-case corresponds to setting \(\sigma =\id\) or any other \(\sigma\) with \(|H\sigma H|= |H|\). For general \(\sigma\), see the remarks after Theorem~\ref{thrm:bound}.
    \end{enumerate}
\end{remark}

The asymptotic results in Theorem~\ref{thrm:mix} are a consequence of the following bounds on \(d(t)\). Unlike Theorem~\ref{thrm:mix}, Theorem~\ref{thrm:bound} contains an explicit error bound that decays super-exponentially as \(p \to \infty\).
\begin{theorem}\label{thrm:bound}
    Fix \(p \ge 11\), \(1 \le k <p\) and \(\sigma \in S_{pk}\). If \(|H\sigma H|=p^a\), then for all \(t \ge 1\)
    \begin{equation}\label{eq:bounds}
        \left|  \Vert P^t_\sigma - \pi\Vert_{\mathrm{TV}} - 1+\left(1-\left(1-\frac{1}{p}\right)^t\right)^{2k-a}\right| \le \frac{4p^4}{(p-1)!} + \frac{t}{(p-2)!}.
    \end{equation}
    In particular,
    \begin{equation}\label{eq:upper-bound}
        \Vert P^t_\sigma - \pi \Vert_{\mathrm{TV}}  \le 1-\left(1-\left(1-\frac{1}{p}\right)^t\right)^{2k-a}+\frac{4p^4}{(p-1)!} + \frac{t}{(p-2)!}.
    \end{equation}
\end{theorem}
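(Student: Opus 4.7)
My plan is to prove Theorem~\ref{thrm:bound} by identifying, via the structure of the Sylow subgroup, a product-Bernoulli coupon-collector process that tightly approximates the Burnside chain. Since $k<p$, the Sylow $p$-subgroup $H\subseteq S_{pk}$ is elementary abelian of rank $k$ (a direct product of $k$ mutually disjoint $p$-cycles), so all its subgroups are elementary abelian $\mathbb{F}_p$-vector spaces. Under the $(H\times H)$-action, the stabilizer of $\sigma$ is in bijection with $V_\sigma := H \cap \sigma H \sigma^{-1}$ via $h \mapsto (h,\sigma^{-1}h\sigma)$, and orbit-stabilizer gives $|V_\sigma| = |H|^2/|H\sigma H| = p^{2k-a}$. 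Fixing a basis of $V_\sigma$, the first draw in the Burnside step becomes a uniform vector $(x_1,\ldots,x_{2k-a})\in(\mathbb{Z}/p)^{2k-a}$ whose coordinates are independent. In particular, the events $\{x_i = 0\}$ have probability $1/p$ each and are independent across $i$; these $2k-a$ independent rare events at each time step are the ``coupons'' whose cumulative collection governs mixing, since when $x_i=0$ the $i$-th direction of the centralizer $C_{S_n}(h)$ is unconstrained and $\tau$ is freely randomized along that direction.

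The heart of the proof is a precise one-step analysis of the second draw $\tau$ given $h$. The set $\{\tau : \tau^{-1}h\tau = \sigma^{-1}h\sigma\}$ is a right coset of the centralizer $C_{S_n}(h)$, and if $h$ is a product of $j$ disjoint $p$-cycles then $|C_{S_n}(h)| = p^j\cdot j!\cdot (n-pj)!$. I would write $P_\sigma$ as a mixture over $h \in V_\sigma$ of the uniform distributions on these cosets, then compare $P_\sigma(\tau)/\pi(\tau)$ to the corresponding ratio under the idealized product-Bernoulli process, using the identity $\pi(\tau)\propto |V_\tau|$. The dominant ``generic'' contributions should match the coupon-collector prediction, and the residual discrepancy is controlled by factorial combinatorics of $p$-cycles in $S_{pk}$: the $\frac{4p^4}{(p-1)!}$ term I expect to trace back to the $(p-1)!$ count of $p$-cycles on a fixed $p$-element set, and the $\frac{t}{(p-2)!}$ term to a union bound over $t$ steps for a rare degeneracy event in which the sampled $h$ has an atypical cycle structure breaking the coupling, with $(p-2)!$ counting the alternative placements of an unwanted second $p$-cycle.

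Once the one-step bound is in hand, iteration is routine: the triangle inequality for total variation, combined with the independence of the coupon events across coordinates, reduces the computation to the exact TV distance of a product of $2k-a$ independent Bernoulli$(1/p)$ chains on $\{0,1\}$, which equals $1-(1-(1-1/p)^t)^{2k-a}$. The main obstacle throughout is the one-step comparison itself: obtaining a super-exponential error requires a sharp pointwise estimate of $P_\sigma(\tau)/\pi(\tau)$, which I would obtain by exploiting the elementary-abelian structure of $V_\sigma$ and the orbit structure of $H$-conjugation on $V_\sigma$ to express the deviation from the ideal coupling as an explicit signed sum over atypical cycle types in $S_{pk}$. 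The assumption $k<p$ is used throughout to guarantee that $H$ and all its intersections with conjugates retain the transparent direct-product structure without which the coordinate decomposition of $V_\sigma$ would fail.
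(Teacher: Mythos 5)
Your coupon-collector heuristic is the right one, and the identifications $|V_\sigma|=p^{2k-a}$, the coordinate decomposition of $V_\sigma$ via the elementary-abelian structure, and the centralizer formula $|C_{S_n}(h)| = p^j\,j!\,(n-pj)!$ all appear in the paper's argument (as Lemma~\ref{lem:dist-R} and equation~\eqref{eq:size-commutator}). But the proposal has a genuine gap at its centerpiece: you propose a pointwise one-step comparison of $P_\sigma(\tau)/\pi(\tau)$ over the full state space $S_{pk}$ and then assert that ``iteration is routine'' via the triangle inequality. It is not. The quantities $a=T(\sigma)$, and with them the set $V_\sigma$ and your $2k-a$ coordinates, change from step to step, so there is no fixed set of ``coupons'' to track across time, and no stated map from $S_{pk}$ onto the $\{0,1\}^{2k-a}$ state space of your idealized product chain. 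The paper's crucial missing ingredient is the exact lumping of the Burnside chain under $T(\sigma)=\log_p|H\sigma H|$ (Theorem~\ref{thrm:lump}, proved via Lemmas~\ref{lem:dist-R} and~\ref{lem:dist-T}). Once you know the chain lumps, the state space collapses to $\{k,\ldots,2k\}$, and then the triangle inequality between the lumped kernel $\overline{P}$ and the monotone coupon-collector kernel $Q$ of \eqref{eq:Q} really does become a per-step estimate times $t$ (Lemma~\ref{lem:Qapprox}). Without the lumping, you would need to prove a pointwise ratio bound over $(pk)!$ states, track how $V_\sigma$ evolves, and justify that the Markov property survives your projection onto coordinates — none of which your sketch addresses.

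There is also a subtler issue with the error accounting. Your $\frac{4p^4}{(p-1)!}$ term does not come from ``the $(p-1)!$ count of $p$-cycles on a fixed $p$-element set''; in the paper it is the bound on the probability that a \emph{uniformly chosen double coset} fails to be of maximal size, $\frac{Z-f(2k;k)}{Z}\le\frac{2p^4}{(p-1)!}$ (Proposition~\ref{prop:most-large}), used twice: once to pass from TV of $P$ to TV of $\overline{P}$ (Lemma~\ref{lem:tv-comparison}, via Lemma~\ref{lem:conditional} which shows $P^t_\sigma$ conditioned on landing in a maximal double coset is exactly uniform there), and once to replace $\overline{\pi}$ by $\delta_{2k}$ (Lemma~\ref{lem:pi_approx}). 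The $\frac{t}{(p-2)!}$ term is indeed a union bound over $t$ steps, but the per-step quantity is the probability that a \emph{uniformly chosen permutation} in $S_{p(b-k)}$ fails to lie in a maximal double coset (Theorem~\ref{thrm:most-large}), not a degeneracy in the cycle structure of the sampled $h$. To complete your proof you would need to state and prove the lumping, derive the explicit lumped kernel~\eqref{eq:lump-P}, and import the two distinct double-coset counting estimates; at that point your argument essentially becomes the paper's.
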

\begin{remark}
    \begin{enumerate}
        \item  Somewhat surprisingly, the right-hand-side of \eqref{eq:upper-bound} diverges as \(t \to \infty\) for fixed \(p\). However, the function \(d(t)\) is non-decreasing and satisfies \(d(\ell t) \le (2d(t))^\ell\) \cite[Chapter~4]{Levin}. Thus, if \eqref{eq:upper-bound} is used to find \(t_0\) such that \(d(t_0)\le 1/4\), then for all \(t \ge t_0\) we will have \(d(t) \le 2^{-\lfloor t/t_0 \rfloor}\). It follows that \eqref{eq:upper-bound} can be adapted to give bounds on \(d(t)\) that go to zero even with fixed \(p\). Note that Theorem~\ref{thrm:mix} implies that the smallest such \(t_0\) is asymptotic to \(p\log k\) as \(p,k \to \infty\).
        \item Furthermore, if \(p\) is even moderately large, then the term \(\frac{t}{(p-2)!}\) is negligible. For instance if \(p=11\), then \(\frac{4p^4}{(p-1)!} < 0.017\) and \(\frac{1}{(p-2)!} < 3\times 10^{-6}\). Thus, \eqref{eq:bounds} gives the value of \(d(t)\) up to an error of less than \(0.02\) for all \(t \le 1000\). Simulations in Section~\ref{sec:sim} also show the accuracy of \eqref{eq:bounds}.
        \item From \eqref{eq:bounds} it is possible to derive the limit profile of \(\Vert P^t_\sigma - \pi \Vert_{\mathrm{TV}}\) for any sequence of permutations \(\sigma \in S_{pk}\) with \(p=p_m \to \infty\). The limit profile is the same as in Theorem~\ref{thrm:mix} except \(k\) is replaced with \(2k-a\) where \(|H\sigma H|=p^a\) as above. In particular, if \(a=2k\), then just a single step of the Burnside process is sufficient as \(p \to \infty\).
        \item The condition \(p \ge 11\) and the error terms in \eqref{eq:bounds} come from lower bounds on the number of double cosets of maximal size. These bounds have not been optimized and the condition \(p \ge 11\) can be weakened. However, the bounds are more than sharp enough to give the asymptotic result in Theorem~\ref{thrm:mix}.
    \end{enumerate}
\end{remark}

The remainder of this section gives an overview of the two main ideas behind Theorems~\ref{thrm:mix} and \ref{thrm:bound}. Section~\ref{sec:background} contains background on the Burnside process and Sylow \(p\)-double cosets. Section~\ref{sec:sylow--burnside} describes how to implement the Sylow--Burnside process and proves an important lumping property of the Sylow--Burnside process. The proofs of the main results are in Section~\ref{sec:proofs}. Section~\ref{sec:examples} contains examples and simulation results and Section~\ref{sec:related} discusses related Markov chains. The article ends in Section~\ref{sec:non-abelian} with conjectures and open questions about extending the results to \(n \ge p^2\).

\subsection{Proof overview}\label{sec:proof-overview}

Theorems~\ref{thrm:mix} and \ref{thrm:bound} rely on two main ideas. The first idea is that the Sylow--Burnside process on \(S_{pk}\) lumps to the size of the double coset. The second idea is that most double cosets are of maximal size.

\subsubsection{Lumping to double coset size}\label{sec:lump-overview}
A Markov chain lumps under a function \(T\) if the following holds:
\begin{definition}
    Let \(P\) be a transition kernel on a set \(\X\) and let \(T:\X \to \mathcal{Y}\) be a function. The transition kernel \(P\) \emph{lumps under \(T\)} if for all Markov chains \((X_t)_{t \ge 0}\) generated by \(P\), the process \(Y_t = T(X_t)\) is a Markov chain on \(\mathcal{Y}\). The process \((Y_t)_{t \ge 0}\) is called a \emph{lumped process}.
\end{definition}
If \(P\) lumps under \(T\), then we will let \(\overline{P}\) denote the transition kernel for the lumped process. The kernel \(\overline{P}\) is called the \emph{lumped kernel}. Furthermore, if \(P\) has a unique stationary distribution \(\pi\), then \(\overline{P}\) has a unique stationary distribution \(\overline{\pi}\). The distribution \(\overline{\pi}\) is called the \emph{lumped stationary distribution} and \(\overline{\pi}(y) = \pi(\{x:T(x)=y\})\).

Lumping a Markov chain can help analyze the mixing time of a Markov chain. This is because the lumped kernel \(\overline{P}\) has a smaller state space and is often easier to analyze. In the current example, the Sylow \(p\)-double cosets in \(S_{pk}\) can have size \(p^k,p^{k+1},\ldots,p^{2k}\). This means that mapping a permutation to the size of its double coset reduces the problem from a state space of size \((pk)!\) to a state space of size \(k+1\). The following result gives an exact formula for the lumped transition kernel of the Sylow--Burnside process. 
\begin{theorem}\label{thrm:lump}
    Define \(T: S_{pk} \to \{k,\ldots 2k\}\) by
    \begin{equation}\label{eq:T}
        T(\sigma)=a \Longleftrightarrow |H\sigma H|=p^a. 
    \end{equation}
    Then the Sylow--Burnside kernel \(P\) lumps under \(T\) and the lumped transition kernel is given by
    \begin{equation}\label{eq:lump-P}
        \overline{P}(a,b)=\sum_{y=0}^{2k-\max\{a,b\}}\binom{2k-a}{y} \left(\frac{p-1}{p}\right)^y  \frac{p^{a+b-2k}}{(p(k-y))!}f(b-y;k-y),
    \end{equation}
    where \(f(a;k)\) is the number of Sylow \(p\)-double cosets in \(S_{pk}\) of size \(p^a\). Furthermore, if \(Z = \sum_{a=k}^{2k}f(a;k)\) is the total number of Sylow \(p\)-double cosets, then the lumped stationary distribution is given by
    \begin{equation}\label{eq:lump-pi}
        \overline{\pi}(a) = \frac{f(a;k)}{Z}.
    \end{equation}
\end{theorem}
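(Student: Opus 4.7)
The plan is to reduce one step of the Sylow--Burnside process to a concrete recipe, then exploit the structure of $H$ as a product of disjoint $p$-cycles (available because $k < p$) to show that transition probabilities aggregate cleanly under $T$. Since $(h_1, h_2) \in G_\sigma$ iff $h_1 = \sigma h_2 \sigma^{-1}$, the stabilizer satisfies $G_\sigma \cong K_\sigma := H \cap \sigma^{-1} H \sigma$, of size $p^{2k-a}$, and the fixed set $\X^{(h_1, h_2)} = \{\tau : \tau h_2 \tau^{-1} = h_1\}$ equals the coset $\sigma \cdot C_{S_n}(h_2)$. A Burnside step therefore reduces to: sample $h$ uniformly from $K_\sigma$; sample $r$ uniformly from $C_{S_n}(h)$; return $\tau = \sigma r$.

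Next, I would establish a ``coordinate subgroup'' lemma. Writing $H = \langle c_1, \ldots, c_k\rangle$ with $c_i$ a $p$-cycle on block $B_i$, any nontrivial $h \in K_\sigma$ has support both a union of $B$-blocks (since $h \in H$) and a union of $\sigma^{-1}(B)$-blocks (since $h \in \sigma^{-1} H \sigma$). Because all such blocks have size $p$, each must coincide with some $\sigma^{-1}(B_j)$, so $K_\sigma = \langle c_i : i \in S\rangle$ where $S$ indexes the ``shared blocks'' and $|S| = 2k - a$. Consequently a uniform $h \in K_\sigma$ has exactly $y$ nontrivial coordinates with probability $\binom{2k-a}{y}(p-1)^y/p^{2k-a}$, and such an $h$ satisfies $|C_{S_n}(h)| = p^y \, y! \, (p(k-y))!$.

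Given $y$, I would then pin down the distribution of $T(\tau)$. Let $S_h \subseteq S$ index the $y$ nontrivial coordinates of $h$, let $U = \bigcup_{i \in S_h} B_i$ be the support of $h$, and split $r = r_U \cdot r_{U^c}$ accordingly. For each $i \in S_h$, one checks that $\tau(B_i)$ is forced to be a $B$-block, contributing $y$ shared blocks between $H$ and $\tau H \tau^{-1}$. Because $\sigma$ maps the $y$ blocks inside $U$ to $B$-blocks, $\sigma(U^c)$ is itself a union of the remaining $k-y$ $B$-blocks, and $\sigma r_{U^c}$ restricts to a uniformly random bijection between two sets of size $p(k-y)$, each canonically partitioned into $k-y$ $B$-blocks. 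Block-preserving identifications with $\{1, \ldots, p(k-y)\}$ turn this into a uniform random $\sigma' \in S_{p(k-y)}$, and the additional shared blocks correspond exactly to $\sigma'$ lying in a Sylow $p$-double coset of size $p^{b-y}$, an event of probability $p^{b-y} f(b-y; k-y)/(p(k-y))!$. Multiplying by the probability of $y$ nontrivial parts and summing over admissible $y$ yields \eqref{eq:lump-P}, while \eqref{eq:lump-pi} follows from $\pi(\sigma) = 1/(Zp^a)$ for every $\sigma$ with $T(\sigma) = a$ together with the count $p^a f(a;k)$ of such $\sigma$.

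The main obstacle is the coordinate subgroup lemma together with its analogue applied to $\tau H \tau^{-1}$: this is where the hypothesis $k < p$ is used (so that $H$ really is a direct product of single $p$-cycles rather than a more complicated wreath product), and pinning down the split into ``old'' shared blocks indexed by $S_h$ versus ``fresh'' ones produced by $\sigma r_{U^c}$ requires being careful about which blocks lie in $U$ versus $U^c$. Once these structural facts are in hand, the rest is bookkeeping.
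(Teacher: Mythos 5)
Your proposal is correct and follows essentially the same route as the paper. The reformulation of the fixed set $S_{pk}^{h,g}$ as the centralizer coset $\sigma C_{S_n}(g)$ is a clean packaging of the paper's Algorithm~\ref{alg:fixed-points}, the coordinate-subgroup lemma is the paper's description of $H\cap\sigma^{-1}H\sigma=\langle\eta_j:j\in A\rangle$, the binomial count of nontrivial coordinates is Lemma~\ref{lem:dist-R}, and the reduction of the remaining $k-y$ blocks to a uniform permutation in $S_{p(k-y)}$ is Lemma~\ref{lem:dist-T}; one small precision worth keeping in mind when writing this up is that the index set $S$ must be taken as $\{i:\sigma\eta_i\sigma^{-1}\in H\}$ rather than merely the indices with $\sigma(B_i)$ a $B$-block (the latter is a superset), though $|S|=2k-a$ and the rest of your argument goes through unchanged.
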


Some aspects of a Markov chain can be ``lost'' when going from \(P\) to \(\overline{P}\). For instance, it is always true that if \(P\) is a transition kernel on \(\X\) that lumps under \(T\), then for all \(x \in \X\)
\[
    \Vert \overline{P}^t_{T(x)} - \overline{\pi} \Vert_{\mathrm{TV}} \le \Vert P^t_x - \pi \Vert_{\mathrm{TV}}.
\]
However, the reverse inequality does not always hold---for example it fails when \(T\) is a constant. For the Sylow--Burnside process with \(T\) defined as in \eqref{eq:T}, Lemma~\ref{lem:tv-comparison} states that reverse inequality holds up to a small error that goes to zero as \(p \to \infty\). Thus, for the purposes of studying \(d(t)\), we can replace \(P\) with \(\overline{P}\) and reduce our problem to a much smaller state space.

Lumping has played an important role in the study and implementation of other examples of the Burnside process. \cite{diaconis2005analysis} showed that the Burnside process always lumps to the orbit---meaning the Markov chain lumps under \(T(x)=\mathcal{O}_x\). In \cite{diaconis2005analysis} an explicit formula for the lumped transition kernel was used to bound the mixing time of the Burnside process for \(S_n\) acting on binary strings. \cite{diaconis2020hahn} also study this example and compute the eigenvectors and eigenvalues of the lumped process. More recently, \cite{diaconis2025curiously} compute the eigenvectors and eigenvalues of the unlumped process and show that the mixing time depends heavily on the starting state. In a different direction, \cite{diaconis2025random} study the Burnside process for contingency tables and integer partitions and show that the lumped process can be enormously more efficient to run than the original process. The present example shows that sometimes it is useful to lump the Burnside process even further.

\subsubsection{Most double cosets are large}\label{sec:most-large}

To analyze the lumped Sylow--Burnside process, we need to understand \(f(a;k)\)---the number of Sylow \(p\)-double cosets in \(S_{pk}\) of size \(p^a\). As claimed in Theorem~\ref{thrm:lump}, the stationary distribution of the lumped process is given by
\[
    \overline{\pi}(a) = \frac{f(a;k)}{Z}.
\]
\cite{diaconis2025number} give a formula for \(f(a;k)\) that we restate in \eqref{eq:numcosets}. \cite{diaconis2025number} also give strong bounds on \(f(a;k)\). These bounds show that most Sylow \(p\)-double cosets have size \(p^{2k}\) in the sense that \(f(2k;k)/Z \to 1\) as \(p \to \infty\). This means that the lumped stationary distribution \(\overline{\pi}\) is well approximated by a point mass at \(2k\). Using this approximation, we get that
\[
    \Vert \overline{P}^t_a - \overline{\pi}\Vert_{\mathrm{TV}} \approx 1-\overline{P}^t_{a}(2k).
\]
That is, the total variation distance between \(\overline{P}^t_a\) and \(\pi\) is roughly equal to the probability that \(\overline{P}^t_a\) assigns to \(\{k,\ldots,2k-1\}\). To compute \(\overline{P}^t_a(2k)\) we use another approximation that replaces \(\overline{P}\) with a second transition kernel \(Q\). The approximating transition kernel \(Q\) has a nice probabilistic description. To move from \(a\) to \(b\) under \(Q\) do the following:
\begin{enumerate}
    \item Flip \(2k-a\) independent coins each with heads probability \(1/p\).
    \item Set \(b = a+X\) where \(X\) is the number of coins that land on heads.
\end{enumerate}
From this description, it follows that \(Q_a^t(2k)\) is equal to the probability that the maximum of \(2k-a\) independent geometric random variables is at most \(t\). Thus,
\[
    \Vert \overline{P}^t_a - \overline{\pi}\Vert_{\mathrm{TV}} \approx 1-Q^t_{a}(2k) = 1-\left(1-\left(1-\frac{1}{p}\right)^{t}\right)^{2k-a}.
\]
Proving the above formula for \(Q^t_a(2k)\) and bounding the distance between \(\overline{P}^t_a\) and \(Q^t_a\) gives Theorem~\ref{thrm:bound} and in turn Theorem~\ref{thrm:mix}.

\subsection*{Acknowledgements}

The author would like to thank Persi Diaconis for introducing him to the Burnside process and for motivating the study of the Sylow--Burnside process. Thanks is also given to Logan Bell, Andrew Liu and Timothy Sudijuno for their helpful comments and suggestions.

\section{Background}\label{sec:background}

\subsection{The Burnside process}

This section reviews some important properties of the Burnside process and surveys recent work on other instances of the Burnside process. Let \(G\) be a finite group acting on a finite set \(\X\). As described in the introduction, the Burnside process is a Markov chain on \(\X\) with transitions from \(x\) to \(y\) according to the following two-step rule:
\begin{enumerate}
    \item From \(x \in \mathcal{X}\), choose \(g\) uniformly from the set \(G_x = \{g \in G : x^g = x\}\).
    \item Given \(g\), choose \(y\) uniformly from the set \(\X^g = \{y \in \X : y^g = y\}\).
\end{enumerate}
The Burnside process can equivalently be described as a random walk on a large bipartite graph. The bi-partition is \(\X \sqcup G\) and an edge connects \(x\) to \(g\) if and only if \(x^g=x\). The two steps of the Burnside process are equivalent to picking \(g\) uniformly from the neighbors of \(x\) and then picking \(y\) uniformly from the neighbors of \(g\). Since the identity group element is connected to every \(x \in \X\), this two-step walk is ergodic on \(\X\). Furthermore, the stationary distribution of the random walk is given by:
\begin{equation}\label{eq:stationary}
    \pi(x) = \frac{\deg(x)}{|\{(x,g): x^g=x\}|} = \frac{|G_x|}{|\{(x,g): x^g=x\}|} =\frac{|G|}{|\mathcal{O}_x||\{(x,g): x^g=x\}|}= \frac{1}{Z|\mathcal{O}_x|}.
\end{equation}
In \eqref{eq:stationary}, the last equality holds because \(|\{(x,g) : x^g =x\}|=Z|G|\) by the lemma that is not Burnside's \cite{neumann1979lemma}. The second last equality uses the orbit--stabilizer theorem (for all \(x \in \X\), \(|G|=|\mathcal{O}_x||G_x|\)).

The definition of the Burnside process and the above results were introduced in \cite{jerrum1993uniform} as an approach to computational P\'olya theory. \cite{jerrum1993uniform} proposed using the Burnside process to estimate the number of orbits \(Z\) --- a task that has applications in chemistry, statistics and computer science. To make the Burnside process a tractable algorithm for estimating \(Z\), two things must be satisfied. First, it must be possible to draw uniform samples from both \(G_x\) and \(\X^g\) as in the definition of the Burnside process. Second, it is necessary to have a bound on the mixing time of the Burnside process. Starting with \cite{jerrum1993uniform}, different works have answered these questions for various group actions and found both positive and negative results.

In the negative direction \cite{goldberg2002burnside} showed that there exist group actions where the Burnside process mixes slowly. They construct their examples by connecting the Burnside process to the Swendsen--Wang algorithm for sampling from Pott's models. Other works have studied the mixing time of the Burnside process for different group actions and found more positive results. These examples include Bose--Einstein statistics \cite{diaconis2005analysis, diaconis2020hahn,diaconis2025curiously}, large sparse contingency tables \cite{dittmer2019counting}, set partitions \cite{paguyo2022mixing}, and conjugacy classes in CA groups \cite{rahmani2022mixing}. 

There also exist a number of works that focus on the implementation of the Burnside process for different applications. These include \cite{bartholdi2024algorithm} for sampling P\'olya trees, \cite{diaconis2025random} for sampling contingency tables and integer partitions and \cite{diaconis2025counting} for counting conjugacy classes in uni-triangular matrix groups. These papers derive implementations of the Burnside process and empirically find that the Burnside process mixes quickly.

Beyond the above examples, properly understanding the mixing time of the Burnside process is an open problem. For instance, Theorem~\ref{thrm:mix} is the first time a limit profile has been computed for a natural instance Burnside process. We hope that our analysis can inform the study of the Burnside process for other group actions.

\subsection{Sylow \(p\)-double cosets in the symmetric group}\label{sec:sylow-background}

In this section we describe the Sylow \(p\)-double cosets in the symmetric group and state some results from \cite{diaconis2025number}. Let \(S_n\) be the symmetric group of permutation of size \(n\). A Sylow \(p\)-subgroup of \(S_n\) is a subgroup \(H \subseteq S_n\) of order \(p^m\) where \(p^m\) is the largest power of \(p\) that divides \(|S_n|=n!\). The value of \(m\) is given by
\[
    m = \sum_{a=1}^\infty \left\lfloor \frac{n}{p^a}\right\rfloor.
\]
The structure of the subgroup \(H\) was determined by \cite{kaloujnine1948structure} and can be described as the automorphism group of a \(p\)-ary tree \cite{wildon2016sylow}. The product group \(H \times H\) acts on the symmetric group by
\[
    \sigma^{h,g} = h^{-1}\sigma g.
\]
The orbits under this group action are the \emph{Sylow \(p\)-double cosets}. A Sylow \(p\)-double coset is a set of the form
\[
    H\sigma H = \{h\sigma g : h,g \in H\}.
\]
The Sylow \(p\)-double cosets form a partition of \(S_n\) and the collection of all double cosets is represented as \(H \backslash S_n/H\). By the orbit--stabilizer theorem, the size of \(H \sigma H\) is given by
\[
    |H \sigma H| = \frac{|H|^2}{|H \cap \sigma^{-1} H \sigma|}.
\]
The subgroup \(H\) has size \(p^m\) and, by Lagrange's theorem \(|H \cap \sigma^{-1} H \sigma|\) must divide \(p^m\). It follows that the size of \(|H\sigma H|\) must be one of \(p^m,p^{m+1},\ldots,p^{2m}\). \cite{diaconis2025number} show that, except for a handful of values of \(n\) and \(p\), each possible size occurs as \(\sigma\) ranges over \(S_n\). They also show that, as long as \(p \ge 3\),  the proportion of permutations \(\sigma\) in double coset of size \(p^{2m}\) goes to \(1\) as \(n \to \infty\). Thus, most permutations are in large double cosets.

The results in this paper are restricted to the ``Abelian case'' when \(n=pk\) with \(k < p\) and so \(m=k\). The name refers to the fact that the Sylow p-subgroup \(H \subseteq S_n\) is Abelian if and only if \(n < p^2\). When \(n=pk\) with \(k<p\), the Sylow \(p\)-subgroup \(H\) is isomorphic to \(C_p^k\)--the product of \(k\) cyclic groups of order \(p\). 

As in the general case, the possible values for \(|H\sigma H|\) are \(p^k,p^{k+1},\ldots, p^{2k}\). \cite{diaconis2025number} compute the number of Sylow \(p\)-double cosets of each of these possible sizes.
\begin{theorem}[Theorem~3.1 in \cite{diaconis2025number}]\label{thrm:formula}
    Let \(f(a;k)\) be the number of Sylow \(p\)-double cosets of size \(p^a\) in \(S_{pk}\). Then for all \(k<p\) and \(a\in \{k,\ldots,2k\}\)
    \begin{equation}
        \label{eq:numcosets}
        f(a;k) = \frac{1}{p^a}\sum_{j=2k-a}^{k}(-1)^{j-(2k-a)}((k-j)p)!j!\binom{k}{j}^2(p(p-1))^j\binom{j}{2k-a}.
    \end{equation}
\end{theorem}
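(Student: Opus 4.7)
The plan is to reduce the count of Sylow $p$-double cosets of size $p^a$ to a count over $\sigma \in S_{pk}$ stratified by a combinatorial ``block-preservation'' index, and then extract this count by M\"obius inversion on binomial coefficients. By orbit--stabilizer applied to the $H\times H$-action $\sigma^{h,g}=h^{-1}\sigma g$, one has $|H\sigma H|\cdot|H\cap \sigma^{-1}H\sigma|=|H|^2=p^{2k}$, so with $b=2k-a$,
\[
p^a f(a;k) \;=\; \bigl|\{\sigma \in S_{pk} : |H \cap \sigma^{-1}H\sigma|=p^b\}\bigr|.
\]
Write $H = \langle \tau_1,\ldots,\tau_k\rangle \cong C_p^k$, where $\tau_i$ is the canonical $p$-cycle on the $i$-th block $B_i$, and define $m(\sigma)=\bigl|\{i:\sigma\tau_i\sigma^{-1}\in H\}\bigr|$, with $G_\sigma$ denoting the underlying index set. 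Concretely, $i \in G_\sigma$ iff $\sigma$ sends $B_i$ setwise to some block $B_j$ via a restriction lying in the $S_p$-normalizer of $\langle \tau_i\rangle$; that normalizer has order $p(p-1)$.

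The key step, and the step I expect to be the main obstacle, is the identity $|H\cap \sigma^{-1}H\sigma| = p^{m(\sigma)}$. The inclusion $\langle \tau_i : i \in G_\sigma\rangle \subseteq H \cap \sigma^{-1}H\sigma$ is immediate and contributes order $p^{m(\sigma)}$. For the reverse inclusion, I would take a generic $h = \prod \tau_i^{c_i} \in H \cap \sigma^{-1}H\sigma$ and argue that $c_{i_0}=0$ whenever $i_0 \notin G_\sigma$. The point is that $\sigma h \sigma^{-1} = \prod (\sigma\tau_i\sigma^{-1})^{c_i}$ lies in $H$ and its factors have pairwise disjoint supports $\sigma(B_i)$, so on any $x \in \sigma(B_{i_0})$ only the $i_0$-factor acts; if $c_{i_0}\neq 0$, this factor permutes $\sigma(B_{i_0})$ transitively as a $p$-cycle. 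Since $\sigma h\sigma^{-1}\in H$ must permute every block $B_j$ setwise, any $B_j$ meeting $\sigma(B_{i_0})$ must in fact equal $\sigma(B_{i_0})$ (orbits of a $p$-cycle have size $p$), and then $\sigma\tau_{i_0}\sigma^{-1}$ must lie in $\langle\tau_j\rangle$, contradicting $i_0\notin G_\sigma$. This block-by-block restriction argument is the technical heart of the proof: it is what rules out algebraic cancellations between different $\sigma\tau_i\sigma^{-1}$'s that could a priori enlarge the intersection beyond what the ``coordinates in $G_\sigma$'' already contribute.

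Granted the key lemma, the formula follows by double counting. Let $\mathcal{D}_j$ consist of tuples $(\sigma, S, T, \phi)$ with $S,T \in \binom{[k]}{j}$, $\phi : S \to T$ a bijection, and $\sigma|_{B_i}$ a normalizer bijection $B_i \to B_{\phi(i)}$ for every $i \in S$. Counting by $\sigma$ first yields $|\mathcal{D}_j| = \sum_\sigma \binom{m(\sigma)}{j}$, since specifying such a decoration of $\sigma$ reduces to choosing a $j$-subset of $G_\sigma$. Counting instead by first choosing $S,T,\phi$ ($\binom{k}{j}^2 j!$ ways), then a normalizer bijection on each of the $j$ active blocks ($(p(p-1))^j$ ways), then an arbitrary bijection on the remaining $(k-j)p$ points, gives
\[
|\mathcal{D}_j| \;=\; \binom{k}{j}^2\, j!\, (p(p-1))^j\, ((k-j)p)!.
\]
Inverting via the identity $\sum_{j\geq b}(-1)^{j-b}\binom{j}{b}\binom{m}{j}=\delta_{m,b}$ recovers $|\{\sigma:m(\sigma)=b\}|$, and dividing by $p^a = p^{2k-b}$ produces the claimed formula \eqref{eq:numcosets}.
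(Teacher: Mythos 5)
The paper states this result as Theorem~3.1 of \cite{diaconis2025number} and does not supply a proof of its own, so there is no internal argument to compare against; your task was in effect to reconstruct a proof from scratch, and the one you give is correct.

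Your key lemma, that \(H\cap\sigma^{-1}H\sigma=\langle\eta_i:i\in G_\sigma\rangle\) where \(G_\sigma=\{i:\sigma\eta_i\sigma^{-1}\in H\}\), is exactly the fact this paper invokes (without full justification) in Section~3.2.1 to derive Algorithm~1, and you are right to single it out as the technical heart: the trivial inclusion only gives a subgroup \emph{containing} the coordinate subgroup, and a priori a subgroup of \(C_p^k\) need not be a coordinate subgroup. Your disjoint-support argument closes the gap: if \(h=\prod\eta_i^{c_i}\) with \(c_{i_0}\neq 0\) and \(\sigma h\sigma^{-1}\in H\), then \(\sigma h\sigma^{-1}\) restricted to \(\sigma(B_{i_0})\) is the \(p\)-cycle \((\sigma\eta_{i_0}\sigma^{-1})^{c_{i_0}}\), so transitivity forces \(\sigma(B_{i_0})\) to coincide with some block \(B_j\); one should add the (small) observation that since \(c_{i_0}\) is invertible mod \(p\), the identity \((\sigma\eta_{i_0}\sigma^{-1})^{c_{i_0}}=\eta_j^d\) with \(d\neq 0\) can be raised to the \(c_{i_0}^{-1}\) power to conclude \(\sigma\eta_{i_0}\sigma^{-1}\in\langle\eta_j\rangle\), contradicting \(i_0\notin G_\sigma\). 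With the lemma in hand, the double count of decorated permutations \((\sigma,S,T,\phi)\) is set up correctly — for fixed \(\sigma\), a valid \(S\) must be a \(j\)-subset of \(G_\sigma\) and then \(T,\phi\) are forced by \(\sigma\), giving \(\binom{m(\sigma)}{j}\); counted the other way one gets \(\binom{k}{j}^2 j!\,(p(p-1))^j\,((k-j)p)!\) — and the binomial inversion \(\sum_{j\ge b}(-1)^{j-b}\binom{j}{b}\binom{m}{j}=\delta_{m,b}\) recovers \(|\{\sigma:m(\sigma)=b\}|=p^a f(a;k)\) with \(b=2k-a\), reproducing~\eqref{eq:numcosets}. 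The formula checks out at \(k=1\): it gives \(f(1;1)=p-1\) and \(f(2;1)=\frac{(p-1)!-(p-1)}{p}\), matching the values \(n_1,n_2\) used in Section~5.
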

\cite{diaconis2025number} also give the follow bounds on \(f(2k;k)\).
\begin{theorem}[Theorem~3.3 in \cite{diaconis2025number}]\label{thrm:most-large}
    For all \(p\) prime and \(k< p\),
    \[
        \frac{(kp)!}{p^{2k}}\left(1-\frac{1}{(p-2)!}\right) \le f(2k;k) \le \frac{(kp)!}{p^{2k}}.
    \]
\end{theorem}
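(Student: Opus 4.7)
The plan is to prove each inequality separately. The upper bound is immediate from counting: Sylow double cosets partition $S_{pk}$, and each double coset of size $p^{2k}$ contributes exactly that many elements, so $p^{2k}f(2k;k)\le (pk)!$. For the lower bound, I would reformulate it as bounding the set of ``bad'' permutations---those $\sigma\in S_{pk}$ with $|H\sigma H|<p^{2k}$, equivalently with $H\cap\sigma^{-1}H\sigma\ne\{\id\}$. Writing
\[
p^{2k}f(2k;k)=(pk)!-|\{\sigma:\sigma\text{ bad}\}|,
\]
the desired inequality reduces to showing $|\{\sigma:\sigma\text{ bad}\}|\le (pk)!/(p-2)!$.

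The structural key is the explicit description of $H$ in the Abelian case $k<p$: the Sylow subgroup is generated by $k$ disjoint $p$-cycles $c_1,\ldots,c_k$ whose supports are disjoint blocks $B_1,\ldots,B_k\subseteq\{1,\ldots,pk\}$ of size $p$. I would show that $\sigma$ is bad if and only if there exists some $j\in\{1,\ldots,k\}$ such that $\sigma(B_j)$ equals one of the blocks, say $B_i$, and the bijection $\sigma|_{B_j}\colon B_j\to B_i$ is \emph{affine}, meaning it conjugates $c_j$ to a power of $c_i$ (equivalently, $\sigma|_{B_j}$ takes the form $x\mapsto\alpha x+\beta$ with $\alpha\in(\integers/p)^{\times}$ under the $\integers/p$ identifications coming from the generators). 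The forward direction decomposes any witness $h=\prod_j c_j^{a_j}$ into its disjoint $p$-cycles; since every element of $H$ has a disjoint-cycle decomposition whose cycles are powers of the $c_i$, the conjugate $\sigma c_j^{a_j}\sigma^{-1}$ must itself be a power of some $c_i$ for each $j$ with $a_j\ne 0$, forcing both the block condition and, since $a_j\ne 0$ generates $\integers/p$, the affineness of $\sigma|_{B_j}$. The reverse direction is immediate by taking $h=c_j$.

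I would then apply the union bound over $j$. For fixed $j$, the number of $\sigma$ meeting the two conditions is (target block)$\times$(affine bijections $B_j\to B_i$)$\times$(free bijection on the complement) $=k\cdot p(p-1)\cdot (p(k-1))!$, where the count $p(p-1)$ follows from parametrizing affine bijections as $x\mapsto\alpha x+\beta$. This yields $|\{\sigma\text{ bad}\}|\le k^2 p(p-1)(p(k-1))!$. The final step is the inequality $k^2 p(p-1)(p(k-1))!\le (pk)!/(p-2)!$; after rearranging and using $(pk)!/(p(k-1))!=\binom{pk}{p}p!$, this reduces to $\binom{pk}{p}\ge k^2$, which is trivial for $k=1$ and follows from $\binom{pk}{p}\ge k^p\ge k^2$ when $k\ge 2$.

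The main obstacle is the structural characterization of bad permutations and the clean count of affine restrictions; once established, the rest is a routine union bound. Notably, the union bound saturates when $k=1$ (every bad $\sigma$ lies in the normalizer $N_{S_p}(H)$, which has order exactly $p(p-1)$), so the constant $1/(p-2)!$ cannot be improved by this argument in full generality.
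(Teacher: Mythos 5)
This theorem is stated in the paper only by citation to \cite{diaconis2025number}; the paper itself contains no proof, so there is nothing internal to compare against. Your argument is correct and self-contained. The upper bound is the observation that cosets of size $p^{2k}$ account for $p^{2k}f(2k;k)\le (pk)!$ elements. Your structural characterization of bad permutations is right: if $g=\prod_j\eta_j^{a_j}\ne\id$ with $\sigma g\sigma^{-1}\in H$, then for any $j_0$ with $a_{j_0}\ne 0$ the $p$-cycle $\eta_{j_0}^{a_{j_0}}$ must be sent by conjugation to one of the $p$-cycles $\eta_i^{b_i}$ of $\sigma g\sigma^{-1}$, forcing $\sigma(B_{j_0})=B_i$; and since $a_{j_0}$ is invertible mod $p$, raising to the $a_{j_0}^{-1}$th power shows $\sigma\eta_{j_0}\sigma^{-1}$ is itself a power of $\eta_i$, i.e.\ $\sigma|_{B_{j_0}}$ is affine. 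The converse is immediate. The count $k\cdot p(p-1)\cdot(p(k-1))!$ for each fixed $j$ and the union bound over $j$ are correct, and the final reduction to $\binom{pk}{p}\ge k^2$ via $\binom{pk}{p}\ge(pk/p)^p=k^p$ closes the argument, including the edge case $k=1$ where the bound is exactly attained on $N_{S_p}(H)$. This is a clean proof; whether it coincides with the argument in the cited reference I cannot say, but it fully establishes the stated bounds.
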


The following result is not stated in \cite{diaconis2025number} but can be derived from their proof of Theorem~3.3 and the remarks that follow. For completeness, a proof is given in Section~\ref{sec:additional-proofs} of the appendix.
\begin{proposition}\label{prop:most-large}
    Let \(Z = \sum_{a=k}^{2k} f(a;k)\) be the total number of Sylow \(p\)-double cosets. Then for all \(p \ge 11\) prime and \(k<p\),
    \[
        Z\left(1-\frac{2p^4}{(p-1)!}\right) \le f(2k;k) \le Z.
    \]
\end{proposition}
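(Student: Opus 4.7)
The upper bound $f(2k;k) \le Z$ is immediate, since $f(2k;k)$ is one summand in the sum $Z=\sum_{a=k}^{2k} f(a;k)$. For the lower bound, the plan is to derive a closed form for $Z - f(2k;k)$ from Theorem~\ref{thrm:formula}, show that this difference is dominated by a single explicit term, and combine with the lower bound on $f(2k;k)$ from Theorem~\ref{thrm:most-large}.

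First I would substitute Theorem~\ref{thrm:formula} into $Z = \sum_{a=k}^{2k} f(a;k)$, re-index by $c = 2k - a$, and swap the order of summation. Writing $g(j;k) = ((k-j)p)!\, j!\, \binom{k}{j}^2 (p(p-1))^j$, the inner sum becomes $\sum_{c=0}^j (-1)^{j-c} p^c \binom{j}{c} = (p-1)^j$ by the binomial theorem, which collapses the expression to
\[
    Z \;=\; \frac{1}{p^{2k}} \sum_{j=0}^k g(j;k)(p-1)^j.
\]
Setting $a = 2k$ in Theorem~\ref{thrm:formula} gives $f(2k;k) = p^{-2k}\sum_{j=0}^k (-1)^j g(j;k)$, and subtracting (using $(p-1)^j - (-1)^j \le (1+1/(p-1))(p-1)^j$ for $j \ge 1$ and $p \ge 11$) yields
\[
    Z - f(2k;k) \;\le\; \frac{1.1}{p^{2k}} \sum_{j=1}^k g(j;k)(p-1)^j.
\]

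Next I would show the right-hand sum is dominated by its $j = 1$ term. Writing $T_j := g(j;k)(p-1)^j$, a direct computation gives
\[
    \frac{T_{j+1}}{T_j} \;=\; \frac{(k-j)^2\, p (p-1)^2}{(j+1)\prod_{i=1}^p \bigl((k-j-1)p + i\bigr)}.
\]
For $p \ge 11$ the denominator (which equals $p!$ when $j = k-1$ and is at least $(p+1)^p$ when $j \le k-2$) dwarfs the polynomial numerator, so successive ratios are very small and $\sum_{j=1}^k T_j$ is only a tiny multiple of $T_1 = k^2 p(p-1)^2 ((k-1)p)!$. The ratio $T_1/(pk)!$ itself is at most $p^4/(p-1)!$: for $k = 1$ it equals $(p-1)^2/(p-1)!$ exactly, and for $k \ge 2$ the bound $(pk)!/((k-1)p)! \ge (p+1)^p$ makes it exponentially smaller.

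Combining these estimates with the lower bound $f(2k;k) \ge (pk)!/p^{2k} \cdot (1 - 1/(p-2)!)$ from Theorem~\ref{thrm:most-large} gives
\[
    \frac{Z - f(2k;k)}{Z} \;\le\; \frac{Z - f(2k;k)}{f(2k;k)} \;\le\; \frac{2 p^4}{(p-1)!},
\]
which rearranges to the stated inequality. The main difficulty is in this last constant-chasing step: the $k = 1$ case essentially saturates the estimate, so the slack in the constant $2$ has to absorb the factor $1.1$ from the alternating sign, the geometric tail of the $T_j$, and the correction $1/(1 - 1/(p-2)!)$ from Theorem~\ref{thrm:most-large}. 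The hypothesis $p \ge 11$ is precisely what makes these small factors combine cleanly.
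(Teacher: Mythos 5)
Your proof is correct and takes a genuinely different route from the paper's. The paper proceeds by bounding each individual $f(a;k)$ for $a<2k$: it first shows (via an alternating-sum argument) that $f(a;k)\le\Lambda_a:=p^{-a}\Gamma_{2k-a,a}$, then that $\Lambda_a$ is increasing in $a$ on $[k,2k-1]$ so all such $f(a;k)\le\Lambda_{2k-1}$, and finally compares $\Lambda_{2k-1}$ to $f(2k;k)$ via Theorem~\ref{thrm:most-large} to get $f(a;k)/f(2k;k)\le 2p^3/(p-1)!$; summing over the $k<p$ values of $a$ produces the extra factor of $p$. You instead obtain a clean closed form $Z=p^{-2k}\sum_{j=0}^{k}g(j;k)(p-1)^j$ by swapping the order of summation and applying the binomial theorem, so that $Z-f(2k;k)=p^{-2k}\sum_{j\ge1}g(j;k)\bigl((p-1)^j-(-1)^j\bigr)$ is a sum of nonnegative terms dominated by $j=1$; you then compare $T_1=g(1;k)(p-1)$ to $(pk)!$ and invoke the same Theorem~\ref{thrm:most-large} lower bound. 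Both approaches hinge on the same two facts---the sums are dominated by their leading terms, and $f(2k;k)\ge(pk)!/p^{2k}(1-1/(p-2)!)$---but your closed form for $Z$ (which, incidentally, also follows directly from the Cauchy--Frobenius lemma together with \eqref{eq:size-commutator}) is a nice shortcut that removes the need for the paper's two intermediate bounding lemmas. The paper's route is more modular and also yields per-$a$ bounds on $f(a;k)/f(2k;k)$, which may be of independent use; yours gets to the aggregate bound more directly and with a pleasingly explicit formula for $Z$.
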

Both Theorem~\ref{thrm:most-large} and Proposition~\ref{prop:most-large} are quantitative versions of the statement ``most Sylow \(p\)-double cosets are of size \(p^{2k}\).'' However, the two results correspond to different meanings of ``most.'' Theorem~\ref{thrm:most-large} states that if \(\sigma\) is chosen uniformly from \(S_{pk}\), then the probability that \(\sigma\) is in a large double-coset goes to 1 as \(p\) goes to infinity. That is, \(p^{2k}f(2k;k)/(pk)! \to 1\). On the other hand, Proposition~\ref{prop:most-large} states that if the double coset is chosen uniformly at random, then the probability that double coset is of size \(p^{2k}\) goes to \(1\). That is, \(f(2k;k)/Z \to 1\). Both these results hold in a very strong sense. They are uniform in \(k<p\) and the error terms go to zero super-exponentially. Theorem~\ref{thrm:most-large} and Proposition~\ref{prop:most-large} are both used to prove the mixing time result in Theorem~\ref{thrm:bound}.

There are many other things to say about Sylow \(p\)-subgroups and double cosets in \(S_n\). See \cite{wildon2016sylow} and \cite{diaconis2025number} for a growing literature.

\section{The Sylow--Burnside process}\label{sec:sylow--burnside}

In this section we study the Sylow Burnside process. Section~\ref{sec:running} contains an implementation of the Sylow--Burnside process and Section~\ref{sec:lump-proof} proves that the Sylow--Burnside process lumps to double coset size. We begin by setting some notation and by fixing a particular Sylow \(p\)-subgroup to work with.

For integers \(a \le b\), \([a:b]\) will be the set \(\{a,a+1,\ldots,b\}\) of size \(b-a+1\). Through-out \(H\) will be a fixed Sylow \(p\)-subgroup in \(S_{pk}\) with \(p\) prime and \(1 \le k < p\). Without loss of generality, we will take \(H\) to be the subgroup
\begin{equation}
    \label{eq:H-def}
    H = \langle \eta_1,\ldots,\eta_k \rangle,
\end{equation}
where \(\eta_j\) is the cyclic permutation \(((j-1)p+1,\ldots,jp)\). The permutations \(\eta_j\) have order \(p\) and commute. This means that each \(g \in H\) can be written uniquely as
\begin{equation}\label{eq:H-form}
    g = \eta_1^{i_1}\cdots \eta_k^{i_k} \quad i_j \in [0:p-1].
\end{equation}
It follows that if \(|\{j \in [1:k] : i_j \neq 0\}|=y\), then \(g\) has \(y\) \(p\)-cycles and \((k-y)p\) fixed points.  

\subsection{Implementation of the Sylow--Burnside process}\label{sec:running}

Let \(H\) be the Sylow \(p\)-subgroup defined by \eqref{eq:H-def}. The goal of this section is to describe how to implement the Burnside process for \(H \times H\) acting on \(S_{pk}\) by \(\sigma^{h,g}=h^{-1}\sigma g\). That is, how to draw uniform samples in the following two steps:
\begin{enumerate}
    \item Given \(\sigma \in S_{pk}\), sample \(h,g\) uniformly from the stabilizer \((H \times H)_\sigma=\{(h,g):\sigma^{h,g}=\sigma\}\).
    \item Given \(h,g\), sample \(\tau\) uniformly from the set of fixed points \(S_{pk}^{h,g} = \{\tau : \tau^{h,g}=\tau\}\).
\end{enumerate}
These two steps are described formally in Algorithms~\ref{alg:stablizer} and \ref{alg:fixed-points} respectively and their derivations are given in the next two subsections. Both algorithms are based on results from Section~3 of \cite{diaconis2025random} which gives a general implementation of the Burnside process for double cosets. The key observation is the equivalence
\begin{equation}\label{eq:equiv}
    \sigma^{h,g}=\sigma \Longleftrightarrow h = \sigma g \sigma^{-1}.
\end{equation}

Algorithms~\ref{alg:stablizer} and \ref{alg:fixed-points} are easy to implement and quick to run. The GitHub repository \url{https://github.com/Michael-Howes/BurnsideProcess} contains an implementation of the Sylow--Burnside process in the Julia Programming Language \cite{bezanson2017julia}. This implementation is used to perform the simulations in Section~\ref{sec:sim}.

\subsubsection{Sampling from a stabilizer}
Equation \eqref{eq:equiv} implies that the map \(g \mapsto (\sigma g \sigma^{-1},g)\) is a bijection between \(H \cap \sigma^{-1} H \sigma\) and \((H \times H)_\sigma\). Thus, to perform the first step it is sufficient to be able to sample from \(H \cap \sigma^{-1}H\sigma\). By definition
\[
    H \cap \sigma^{-1}H \sigma = \langle\eta_1,\ldots,\eta_k \rangle \cap \langle \sigma^{-1}\eta_1\sigma,\ldots, \sigma^{-1}\eta_k \sigma\rangle.
\]
Furthermore, the permutations \(\sigma^{-1}\eta_j\sigma\) are all cyclic permutations of order \(p\). Thus, if \(\sigma^{-1} \eta_j \sigma \in H\) for some \(j\), then it must be the case that \(\sigma^{-1}\eta_j\sigma=\eta_{j'}^i\) for some \(j' \in [1:k]\) and \(i \in [1:p-1]\). It follows that if \(A = \{j : \sigma\eta_j \sigma^{-1}\in  H\}\), then
\[
    H \cap \sigma^{-1}H\sigma = \langle \eta_j : j \in A\rangle.
\]
And thus every element of \(H \cap \sigma^{-1}H\sigma\) can be written uniquely as \(\prod_{j \in A}\eta_j^{i_j}\) for some choice of \(i_j \in [0:(p-1)]\). From this observation, it is easy to draw uniform samples from \(H \cap \sigma^{-1}H\sigma\) and hence \((H \times H)_{\sigma}\). This idea is formalized in Algorithm~\ref{alg:stablizer}.

\subsubsection{Sampling from a set of fixed points}

Now we will describe how to sample \(\tau\) uniformly from \(S_{pk}^{h,g}\). First note that during the Burnside process, we always have that \(\sigma \in S_{pk}^{h,g}\), and so we can assume that \(S_{pk}^{h,g} \neq \emptyset\). This implies that \(h = \sigma g \sigma^{-1}\) for some \(\sigma\) and thus \(h\) has the same cycle type of \(g\). Since \(h,g \in H\), \(h\) and \(g\) only have fixed points and \(p\)-cycles, and they must have the same number of each.

Let \(y \in [0:k]\) be the number of \(p\)-cycles in \(h\) and \(g\). A permutation, \(\tau\) satisfies \(\tau g \tau^{-1}=h\) if and only if \(\tau\) maps the \((k-y)p\) fixed points of \(g\) to the \((k-y)p\) fixed points of \(h\) and maps the \(y\) \(p\)-cycles of \(g\) to the \(y\) \(p\)-cycles of \(h\). Specifically, \(\tau\) can act as an arbitrary bijection on the fixed points but on the \(p\)-cycles \(\tau\) can only cyclically shift the elements of the \(p\)-cycles of \(g\) and then map the cycles as blocks to the cycles of \(h\). It follows that 
\begin{equation}\label{eq:size-commutator}
    |S_{pk}^{h,g}|=((k-y)p)! \times y! p^{y}.
\end{equation}
Algorithm~\ref{alg:fixed-points} is derived from formula and description above.

\begin{algorithm*}[p]
    \caption{Uniform sampler for the stabilizer \((H \times H)_\sigma\).}
    \begin{algorithmic}[1]
        \Require{\(\sigma \in S_{pk}\).}
        \Ensure{\((h,g) \in (H \times H)_\sigma\) uniformly sampled.}
        \State{\(g \gets \id\)}\Comment{The identity permutation.}
        \For{\(j \in [1:k]\)}
        \If{\(\sigma \eta_j\sigma^{-1} \in H\)}
        \State{Sample \(i_j\) uniformly from \([0:p-1]\)}
        \State{\(g \gets g\eta_j^{i_j}\)}
        \EndIf
        \EndFor
        \State{\(h \gets \sigma g \sigma^{-1}\)}
        \State\Return{\((h,g)\)}
    \end{algorithmic}
    \label{alg:stablizer}
\end{algorithm*}

\begin{algorithm*}[p]
    \caption{Uniform sampler for the set of fixed points \(S_{pk}^{h,g}\).}
    \begin{algorithmic}[1]
        \Require{\((h,g) \in H \times H\) with \(S_{pk}^{h,g}\) non-empty.}
        \Ensure{\(\tau \in S_{pk}^{h,g}\) uniformly sampled.}
        \Statex\Comment{Define \(\tau\) on the fixed points of \(g\).}
        \State{\(F(g) \gets \{a : g(a)=a\}\)}
        \State{\(F(h) \gets \{b : h(b)=b\}\)}
        \State{Uniformly sample a bijection \(\rho : F(g) \to F(h)\)}
        \For{\(a \in F(g)\)}
        \State{\(\tau(a) \gets \rho(a)\)}
        \EndFor
        \Statex\Comment{Define \(\tau\) on the \(p\)-cycles of \(g\).}
        \State{Find the \(p\)-cycles of \(g\): \((a_{1,1},a_{1,2},\ldots,a_{1,p}),\ldots,(a_{y,1},\ldots,a_{y,p})\)}
        \State{Find the \(p\)-cycles of \(h\): \((b_{1,1},b_{1,2},\ldots,b_{1,p}),\ldots,(b_{y,1},\ldots,b_{y,p})\)}
        \State{Uniformly sample \(c_1,\ldots,c_y \in [0:(p-1)].\)}
        \State{Uniformly sample \(\gamma \in S_y\).}
        \For{\(i \gets 1\) to \(y\)}
        \For{\(j \gets 1\) to \(p\)}
        \State{\(\tau(a_{i,j}) \gets b_{\gamma(i), j+c_i \bmod p}\)}
        \EndFor
        \EndFor
        \State\Return{\(\tau\).}
    \end{algorithmic}
    \label{alg:fixed-points}
\end{algorithm*}

\subsection{Proof of Theorem~\ref{thrm:lump}}\label{sec:lump-proof}

Algorithms~\ref{alg:stablizer} and \ref{alg:fixed-points} will be used to show that the Sylow--Burnside process lumps to double coset size. Let \(P\) be the transition kernel for the Sylow--Burnside process and let \(T : S_{pk} \to [k:2k]\) be given by \(|H\sigma H|=p^{T(\sigma)}\). Theorem~\ref{thrm:lump} states that \(P\) lumps under \(T\) and gives expressions for the lumped kernel \(\overline{P}\) and stationary distribution \(\overline{\pi}\). By Dynkin's criteria \cite[Section~6.3]{kemeny1976finite} it is sufficient to show that for all \(a,b \in [k:2k]\) if \(T(\sigma)=a\), then
\begin{align}\label{eq:dynkin}
    \overline{P}_a(b) & := P_\sigma(\{\tau: T(\tau)=b\})=\sum_{y=0}^{2k-\max\{a,b\}}\binom{2k-a}{y} \left(\frac{p-1}{p}\right)^y  \frac{p^{a+b-2k}}{(p(k-y))!}f(b-y;k-y), \\
    \overline{\pi}(b) & := \pi(\{\tau : T(\tau)=b\}) =\frac{f(b;k)}{Z},\label{eq:dynkin-2}
\end{align}
where \(f(a;k)\) is the number of double cosets in \(H \backslash S_{pk}\slash H\) of size \(p^{a}\) and \(Z\) is the total number of double cosets. To show \eqref{eq:dynkin}, we will introduce a second function \(R:H \to [0:k]\) and then prove two lemmas about the relationship between \(R\) and \(T\). To define \(R(g)\) write \(g=\eta_1^{i_1}\cdots\eta_k^{i_k}\) as in \eqref{eq:H-form} and set
\begin{equation}
    \label{eq:R}
    R(g) = |\{j \in [1:k] : i_j \neq 0\}|.
\end{equation}
That is, \(R(g)\) is the number of \(p\)-cycles in \(g\). The first lemma, Lemma~\ref{lem:dist-R}, describes the distribution of \(R(g)\) when \(g\) is uniformly sampled from \(H \cap \sigma^{-1} H \sigma \) as in Algorithm~\ref{alg:stablizer}.
\begin{lemma}\label{lem:dist-R}
    Fix \(\sigma \in S_{pk}\) such that \(T(\sigma)=a\). If \(g\) is uniformly sampled from \(H \cap \sigma^{-1} H \sigma\), then 
    \[
        \Prob(R(g) = y) = \binom{2k-a}{y} \left(\frac{p-1}{p}\right)^y \left(\frac{1}{p}\right)^{2k-a-y}.
    \]
    In words, \(R(g)\) is binomially distributed with parameters \(2k-a\) and \(\frac{p-1}{p}\).
\end{lemma}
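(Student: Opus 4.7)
The plan is to unpack the structure of $H \cap \sigma^{-1}H\sigma$ already identified in Section~\ref{sec:running} and translate the uniform distribution on this subgroup into a product of independent uniform choices of exponents. The key quantity to pin down is the rank $|A|$ of this intersection as an elementary abelian $p$-group, and this is where orbit--stabilizer gives us the link to $a = T(\sigma)$.

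First I would compute $|H \cap \sigma^{-1}H\sigma|$. The action of $H\times H$ on $S_{pk}$ by $\sigma^{h,g}=h^{-1}\sigma g$ has stabilizer $(H\times H)_\sigma = \{(\sigma g\sigma^{-1},g) : g\in H,\ \sigma g\sigma^{-1}\in H\}$, which is in bijection with $H \cap \sigma^{-1}H\sigma$. By orbit--stabilizer,
\[
    p^a = |H\sigma H| = \frac{|H\times H|}{|(H\times H)_\sigma|} = \frac{p^{2k}}{|H\cap \sigma^{-1}H\sigma|},
\]
so $|H\cap \sigma^{-1}H\sigma| = p^{2k-a}$.

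Next I would recall from Section~\ref{sec:running} that, because each conjugate $\sigma^{-1}\eta_j\sigma$ is a $p$-cycle, membership of $\sigma^{-1}\eta_j\sigma$ in $H$ forces it to equal some power $\eta_{j'}^i$, and consequently
\[
    H\cap\sigma^{-1}H\sigma = \langle \eta_j : j\in A\rangle, \qquad A = \{j \in [1:k] : \sigma^{-1}\eta_j\sigma \in H\}.
\]
Since the $\eta_j$ commute and have coprime supports, the right-hand side has order exactly $p^{|A|}$. Combining with the previous step gives $|A| = 2k-a$. Every $g \in H\cap \sigma^{-1}H\sigma$ then has a unique representation $g = \prod_{j\in A}\eta_j^{i_j}$ with $i_j \in [0:p-1]$, and by the uniqueness of the general representation \eqref{eq:H-form} we have $i_j = 0$ for $j \notin A$.

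Finally, sampling $g$ uniformly from $H\cap\sigma^{-1}H\sigma$ corresponds to sampling the exponents $(i_j)_{j\in A}$ independently and uniformly from $[0:p-1]$. By definition $R(g) = |\{j\in[1:k]: i_j\neq 0\}| = |\{j\in A: i_j\neq 0\}|$, which is a sum of $|A| = 2k-a$ independent Bernoulli random variables each equal to $1$ with probability $(p-1)/p$. Hence $R(g) \sim \mathrm{Binom}(2k-a,(p-1)/p)$ as claimed. There is no genuine obstacle here; the only point that requires a moment of care is verifying $|A| = 2k-a$, and this is handled cleanly by combining orbit--stabilizer with the structural description of $H\cap\sigma^{-1}H\sigma$ from the implementation section.
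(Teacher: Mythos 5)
Your proof is correct and follows essentially the same route as the paper: identify $H\cap\sigma^{-1}H\sigma$ as an elementary abelian $p$-group generated by a sub-collection of the $\eta_j$, use orbit--stabilizer to get its order $p^{2k-a}$, and observe that uniform sampling amounts to independent uniform exponents so that $R(g)$ is a sum of $2k-a$ independent Bernoulli$\left(\frac{p-1}{p}\right)$ indicators. One small slip worth noting: you write $A=\{j : \sigma^{-1}\eta_j\sigma\in H\}$, but the correct index set for the identity $H\cap\sigma^{-1}H\sigma=\langle\eta_j:j\in A\rangle$ is $A=\{j : \sigma\eta_j\sigma^{-1}\in H\}$ (equivalently $\{j:\eta_j\in\sigma^{-1}H\sigma\}$), as in Algorithm~\ref{alg:stablizer}; your set is obtained by conjugating in the wrong direction. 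The two index sets are in general different, though they have the same cardinality (the map sending $j$ with $\sigma^{-1}\eta_j\sigma=\eta_{j'}^{i}$ to $j'$ is a bijection between them), so your distributional conclusion is unaffected, but the intermediate claim about the generating set as stated does not hold.
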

\begin{proof}
    By Algorithm~\ref{alg:stablizer}, we know that \(g\) can be represented as
    \[ g = \prod_{j \in A} \eta_j^{i_j},\]
    where \(A = \{ j : \sigma \eta_j \sigma^{-1} \in H\}\) and \(\{i_j : j \in A\}\) are independent and uniformly distributed on \([0:(p-1)]\). Furthermore, we have that
    \[
        p^{|A|} =  |H \cap \sigma^{-1}H\sigma| = \frac{|H|^2}{|H\sigma H|} = p^{2k-a}.
    \]
    Thus, \(R(g)=\sum_{j \in A}I[i_j\neq 0]\) is equal in distribution to the sum of \(2k-a\) independent binary variables each with expected value \(\frac{p-1}{p}\). Thus, \(R(g)\) is binomially distributed with parameters \(2k-a\) and \(\frac{p-1}{p}\) as claimed.
\end{proof}

The next result, Lemma~\ref{lem:dist-T}, gives the distribution of \(T(\tau)\) when \(\tau\) is sampled from \(S_{pk}^{h,g}\) as in Algorithm~\ref{alg:fixed-points}. It shows that the distribution of \(T(\tau)\) can be described in terms of uniformly sampling \(\tau' \in S_{pk'}\) with \(k' \le k\) and looking at the size of the \(p\)-Sylow double coset containing \(\tau'\).
\begin{lemma}
    \label{lem:dist-T}
    Let \((h,g) \in H\times H\) be such that \(S_{pk}^{h,g} \neq \emptyset\) and \(R(g)=R(h)=y\). Let \(\tau\) be uniformly distributed in \(S_{pk}^{(h,g)}\), then
    \[
        \Prob(T(\tau) = b) = 
        \begin{cases}
            \frac{1}{(p(k-y))!}p^{b-y}f(b-y;k-y) & \text{if } b \le 2k-y, \\
            0                                    & \text{otherwise}.
        \end{cases}
    \]
\end{lemma}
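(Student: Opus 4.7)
The plan is to reduce the distribution of $T(\tau)$ to a uniform sampling problem in the smaller symmetric group $S_{(k-y)p}$, to which Theorem~\ref{thrm:formula} then applies directly. I would begin by expressing $T(\tau)$ in terms of which generators of $H$ conjugate back into $H$. As recalled in the discussion preceding Algorithm~\ref{alg:stablizer}, $\tau\eta_j\tau^{-1} \in H$ iff $\tau$ maps the block $B_j := \{(j-1)p+1,\ldots,jp\}$ onto some $B_{j'}$ in a manner that intertwines $\eta_j$ with $\eta_{j'}^{i}$ for some $i \in [1:p-1]$. Setting $S = \{j : \tau\eta_j\tau^{-1} \in H\}$, one then has $H \cap \tau^{-1}H\tau = \langle \eta_j : j \in S\rangle$, so $T(\tau) = 2k - |S|$.

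Next I would decompose $|S|$ using Algorithm~\ref{alg:fixed-points}. Let $J_g, J_h \subseteq [1:k]$ be the $(k-y)$-element index sets with $F(g) = \bigcup_{j \in J_g} B_j$ and $F(h) = \bigcup_{j \in J_h} B_j$. For $j \notin J_g$, Algorithm~\ref{alg:fixed-points} sends $B_j$ onto some $B_{j'}$ with $j' \notin J_h$ via a cyclic shift that is automatically intertwining (a direct calculation shows $\tau\eta_j\tau^{-1}$ is an explicit power of $\eta_{j'}$, independent of the chosen shift $c_i$), so all $y$ such indices lie in $S$. For $j \in J_g$, whether $j \in S$ depends only on the uniform bijection $\rho : F(g) \to F(h)$. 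Writing $X$ for the number of $j \in J_g$ with $\rho\eta_j\rho^{-1} \in H$, we have $|S| = y + X$, so the event $T(\tau) = b$ is exactly $\{X = 2(k-y) - (b-y)\}$.

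Finally I would transport the question into $S_{(k-y)p}$. Choose bijections $\phi_g : F(g) \to [1:(k-y)p]$ and $\phi_h : F(h) \to [1:(k-y)p]$ that each send the $k-y$ blocks in their domains onto the standard $p$-blocks, \emph{while intertwining every $\eta_j$ ($j \in J_g$ or $j \in J_h$) with the corresponding generator} $\eta'_i$ of the standard Sylow $p$-subgroup $H^* = \langle \eta'_1, \ldots, \eta'_{k-y}\rangle \subseteq S_{(k-y)p}$; such identifications are straightforward to construct. Under $\pi := \phi_h \circ \rho \circ \phi_g^{-1}$, uniform $\rho$ becomes uniform $\pi \in S_{(k-y)p}$, and $X$ equals the number of $i \in [1:k-y]$ with $\pi\eta'_i\pi^{-1} \in H^*$, which is $2(k-y) - T^*(\pi)$ where $p^{T^*(\pi)} = |H^*\pi H^*|$. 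Hence $\Prob(T(\tau) = b) = \Prob(T^*(\pi) = b-y)$, and since $\pi$ is uniform the count of such $\pi$ is $p^{b-y}f(b-y;k-y)$ by Theorem~\ref{thrm:formula}. Dividing by $((k-y)p)!$ yields the claimed formula, with $f(b-y;k-y) = 0$ producing the ``otherwise'' case $b > 2k-y$.

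The main obstacle is the intertwining bookkeeping: verifying from Algorithm~\ref{alg:fixed-points} that $\tau$ restricted to each $g$-cycle support is automatically compatible with $H$ (picking out a specific power of $\eta_{j'}$), and constructing the identifications $\phi_g, \phi_h$ so that the ``conjugates-into-$H^*$'' count is preserved under the reduction. Both facts are concrete but need care, since "$\tau(B_j)$ is a block" is weaker than "$\tau\eta_j\tau^{-1} \in H$" in general; once the cyclic compatibility is unpacked, the lemma becomes a clean substitution into Theorem~\ref{thrm:formula}.
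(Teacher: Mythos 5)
Your proof is correct and follows essentially the same route as the paper: split the generators $\eta_j$ according to whether $B_j$ lies in the cycle support or the fixed-point set of $g$, observe that the cycle-support indices automatically conjugate into $H$, and reduce the remaining count to the double-coset size of a uniform permutation in $S_{p(k-y)}$. The paper states the reduction to $S_{p(k-y)}$ as an "equal in distribution" step without constructing the block-preserving identifications $\phi_g,\phi_h$; your more explicit bookkeeping there is a matter of detail, not a different argument.
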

\begin{proof}
    First note that
    \[
        T(\tau) = 2k - |\{j : \tau \eta_j \tau^{-1} \in H\}|.
    \]
    Next write \(g=\eta_1^{j_1}\cdots \eta_k^{i_k}\) and set \(J_0(g):=\{j \in [1:k]: \eta_j =0\}\) and \(J_1(g) := [1:k] \setminus J_0(g)\).  Define \(J_0(h)\) and \(J_1(h)\) analogously.
    
    By assumption, we have that \(|J_1(g)|=|J_1(h)|=y\) and \(|J_0(g)|=|J_0(h)|=k-y\). In particular, \(g\) and \(h\) both have \((k-y)p\) fixed points and \(y\) \(p\)-cycles. By Algorithm~\ref{alg:fixed-points} we know that \(\tau\) must map each cycle \(\eta_j^{i_j}\) with \(i_j \neq 0\) to a power of a \(p\)-cycle of \(h\). It therefore follows that if \(j \in J_1(g)\), then \(\tau\eta_j\tau^{-1} \in H\). Therefore,
    \[
        T(\tau) \le 2k-|J_1(g)| = 2k-y.
    \]
    It remains to show that if \(b \le 2k-y\), then
    \[
        \Prob(T(\tau) = b)= \frac{1}{(p(k-y))!}p^{b-y}f(b-y;k-y).
    \]
    To see this, let \(F(g)\) and \(F(h)\) be the set of fixed points of \(g\) and \(h\). Recall that by Algorithm~\ref{alg:fixed-points}, \(\tau\) induces a random bijection from \(F(g)\) to \(F(h)\). It follows that the number of \(j \in J_1(h)\) such that \(\tau\eta_j\tau^{-1}\in H\) is equal in distribution \(|\{j \in [1:(k-y)] : \rho \eta_j \rho^{-1} \in H'\}|\) where \(\rho\) is uniformly sampled form \(S_{p(k-y)}\) and \(H'\) is a \(p\)-Sylow subgroup of \(S_{p(k-y)}\). It follows that for \(b \le 2k-y\),
    \begin{align*}
        \Prob(T(\tau)=b) & =\Prob(2k - |\{j \in J_0(h) : \tau \eta_j \tau^{-1} \in H\}|+|\{j \in J_1(h) : \tau \eta_j \tau^{-1} \in H\}| = b) \\
                         & =\Prob(2k -y- |\{j \in [1:(k-y)] : \rho \eta_j \rho^{-1} \in H\}| = b)                                             \\
                         & =\Prob(2(k -y)- |\{j \in [1:(k-y)] : \rho \eta_j \rho^{-1} \in H\}| = b-y)                                         \\
                         & =\Prob(|H'\rho H'|=p^{b-y})                                                                                        \\
                         & =\frac{1}{((k-y)p)!}p^{b-y}f(b-y;k-y),
    \end{align*}
    where the last line holds because there are \(p^{b-y}f(b-y;k-y)\) permutations in \(S_{(k-y)p}\) in double cosets of size \(p^{b-y}\).
\end{proof}
We are now ready to prove Theorem~\ref{thrm:lump} by showing \eqref{eq:dynkin} and \eqref{eq:dynkin-2}. This is done by marginalizing over the distribution of \(R(g)\) where \(h,g\) are sampled in the first step of the Sylow--Burnside process. Lemma~\ref{lem:dist-R} gives the distribution of \(R(g)\) and Lemma~\ref{lem:dist-T} gives the conditional distribution of \(T(\tau)\) given \(R(g)\). The distribution of \(T(\tau)\) when \(\tau \sim P_\sigma\) is derived by combining these two results.
\begin{proof}[Proof of Theorem~\ref{thrm:lump}]
    To prove \eqref{eq:dynkin}, fix \(\sigma \in S_{pk}\) with \(T(\sigma)=a\). To sample \(\tau \sim P_\sigma\), we first sample \((h,g)\) as in Algorithm~\ref{alg:stablizer} and then sample \(\tau\) as in Algorithm~\ref{alg:fixed-points}. Thus,
    \[
        \Prob(T(\sigma)=b)  =\sum_{y=0}^{k}\Prob(R(g)=y)\Prob(T(\tau)=b\mid R(g)=y) 
    \]
    By Lemmas~\ref{lem:dist-R} and \ref{lem:dist-T},
    \begin{align*}
         & \sum_{y=0}^{k}\Prob(R(g)=y)\Prob(T(\tau)=b\mid R(g)=y)                                                                             \\
         & =\sum_{y=0}^{k}\binom{2k-a}{y} \left(\frac{p-1}{p}\right)^y \left(\frac{1}{p}\right)^{2k-a-y} \frac{1}{(p(k-y))!}p^{b-y}f(b-y;k-y) \\
         & =\sum_{y=0}^{2k-\max\{a,b\}}\binom{2k-a}{y} \left(\frac{p-1}{p}\right)^y  \frac{p^{a+b-2k}}{(p(k-y))!}f(b-y;k-y).
    \end{align*}
    Thus, \(P\) lumps under the map \(T\) and the lumped kernel is given by \eqref{eq:dynkin}. For the stationary distribution of \(\overline{P}\), suppose that \(\tau \sim \pi\), then
    \[  
        \overline{\pi}(b) = \Prob(T(\tau)=b) = \frac{1}{Zp^{b}}p^{b}f(b;k) = \frac{f(b;k)}{Z},
    \]
    and so \(\overline{\pi}\) is given by \eqref{eq:dynkin-2}.
\end{proof}

\section{Proofs of mixing time results}\label{sec:proofs}

In this section we first prove the non-asymptotic result in Theorem~\ref{thrm:bound} and then take a limit to derive the limit profile in Theorem~\ref{thrm:mix}.

\subsection{Proof of Theorem~\ref{thrm:bound}}

As explained in Section~\ref{sec:proof-overview}, there are two ideas behind the proof of Theorem~\ref{thrm:bound}. The first idea is to approximate the mixing time of \(P\) with the mixing time of \(\overline{P}\). This approximation is based on the fact that most double cosets are large and is proved in Lemmas~\ref{lem:conditional} and \ref{lem:tv-comparison}. 

The second idea is that the mixing time of \(\overline{P}\) is approximately the time it takes reach \(2k\) (corresponding to the Burnside process reaching the large double cosets). To compute the time it takes \(\overline{P}\) to reach \(2k\) we approximate \(\overline{P}\) by a simpler transition kernel \(Q\). In Lemma~\ref{lem:Qapprox} we show that \(Q\) is a good approximation to \(\overline{P}\). Finally, in Lemma~\ref{lem:Qmix}, we compute the time it takes \(Q\) to reach \(2k\). Combining this calculation with the various approximations gives Theorem~\ref{thrm:bound}.

The first lemma describes the conditional distribution of \(\tau\) when \(\tau\) is sampled via the Burnside process and conditioned on being in a large double coset. Specifically, it states that if \(\tau \sim P^t_{\sigma}\) and \(T(\tau)=2k\), then \(\tau\) is uniformly distributed over the set of such \(\tau\).
\begin{lemma}\label{lem:conditional}
    Let \(B=\{\sigma \in S_{pk}:T(\sigma)=2k\}\) and fix \(\sigma \in S_{pk}\) and \(t \ge 1\). Then, the conditional distributions \(P^t_{\sigma}(\cdot \mid B)\) and \(\pi(\cdot \mid B)\) are both uniform on \(B\). That is, for any, \(A \subseteq S_{pk}\)
    \[
        P^t_{\sigma}(A \mid B)=\pi(A\mid B)=\frac{|A \cap B|}{|B|}.
    \]
\end{lemma}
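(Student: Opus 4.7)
The plan is to prove the lemma in two parts: first handle $\pi(\cdot\mid B)$ (which is essentially immediate from the formula for $\pi$), and then handle $P^t_\sigma(\cdot\mid B)$ by showing that the one-step transition probability $P(x,\tau)$ depends only on $x$ when $\tau$ ranges over $B$.

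For the stationary part, recall that $\pi(\tau) = 1/(Z|\mathcal{O}_\tau|)$ and that $\tau \in B$ means $|\mathcal{O}_\tau| = |H\tau H| = p^{2k}$. Thus $\pi(\tau) = 1/(Zp^{2k})$ is the same constant for every $\tau \in B$, which immediately gives $\pi(A\mid B) = |A \cap B|/|B|$.

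For the dynamic part, the key observation is that if $\tau \in B$ then the $(H\times H)$-stabilizer of $\tau$ is trivial: by orbit--stabilizer, $|(H\times H)_\tau| = |H|^2/|H\tau H| = p^{2k}/p^{2k} = 1$. So in step 1 of the Burnside process from $\tau$ we are forced to choose $(h,g) = (\id,\id)$, and then in step 2 we sample uniformly from $S_{pk}^{\id,\id} = S_{pk}$. Consequently $P(\tau, x) = 1/(pk)!$ for every $x \in S_{pk}$ whenever $\tau \in B$. Combined with reversibility of $P$ with respect to $\pi$, this gives
\[
    \pi(x) P(x,\tau) = \pi(\tau) P(\tau, x) = \frac{1}{Z p^{2k}} \cdot \frac{1}{(pk)!},
\]
so $P(x,\tau) = 1/(\pi(x)\, Z p^{2k}\, (pk)!)$ depends only on $x$, not on the choice of $\tau \in B$.

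From here the proof is essentially one line: writing $P^t_\sigma(\tau) = \sum_{x} P^{t-1}_\sigma(x)\, P(x,\tau)$ and using that each summand is constant in $\tau \in B$ shows that $P^t_\sigma(\tau)$ itself is constant in $\tau \in B$, which is exactly uniformity of $P^t_\sigma(\cdot \mid B)$ on $B$. I do not expect any serious obstacles; the only mild subtlety is remembering to invoke reversibility (from \cite[Theorem~2.1]{jerrum1993uniform}) to transfer the easy one-step uniformity \emph{out of} $B$ into the desired symmetry of transitions \emph{into} $B$.
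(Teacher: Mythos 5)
Your proof is correct, and the key observation (that the stabilizer \((H\times H)_\tau\) is trivial for \(\tau\in B\), forcing uniformity of the transitions involving \(\tau\)) is exactly the one the paper uses. The stationary-distribution half is identical. Where you diverge is in the dynamic half: you compute \(P(\tau,x)=1/(pk)!\) for \(\tau\in B\) and then invoke reversibility to convert this into constancy of \(P(x,\tau)\) over \(\tau\in B\), whereas the paper reads constancy of \(P(\sigma,\tau)\) directly off the kernel formula \eqref{eq:kernel}: since \((H\times H)_\sigma\cap(H\times H)_\tau=\{(\id,\id)\}\) when \(\tau\in B\), the sum collapses to a single term and \(P(\sigma,\tau)=\frac{1}{|(H\times H)_\sigma|}\cdot\frac{1}{(pk)!}\), independent of \(\tau\). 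So your reversibility step is a harmless but unnecessary detour; the same triviality of the stabilizer you use to make \(P(\tau,\cdot)\) uniform also makes \(P(\cdot,\tau)\) constant in \(\tau\) directly.

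One thing your version does more cleanly is the passage from \(t=1\) to general \(t\): you argue that \(P^t_\sigma(\tau)=\sum_x P^{t-1}_\sigma(x)P(x,\tau)\) is constant in \(\tau\in B\) because each summand is, which is a one-line conclusion. The paper instead writes an identity for the conditional probability \(P^t_\sigma(A\mid B)\) in terms of the \(P_\tau(A\mid B)\), which (read literally) isn't quite a valid decomposition of a conditional probability and needs to be unpacked as a statement about \(P^t_\sigma(A\cap B)\). Your phrasing avoids that wrinkle entirely.
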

\begin{proof}
    Note that \(\pi(\tau)= Z^{-1}p^{-T(\tau)}\). Thus, \(\pi(\tau)\) depends only on \(T(\tau)\) and so \(\pi(\cdot \mid B)\) is uniform on \(B\).
    
    To show that \(P^t_{\sigma}(A \mid B) = |A\cap B|/|B|\) we can assume without loss of generality that \(t=1\). This is because for any \(t \ge 1\)
    \[
        P^t_{\sigma}(A \mid B) = \sum_{\tau \in S_{pk}} P^{t-1}_{\sigma}(\tau)P_{\tau}(A \mid B).
    \]
    Thus, if \(P_{\sigma}(\cdot \mid B)\) is the uniform distribution for every \(\sigma \in S_{pk}\), then \(P^t_{\sigma}(\cdot\mid B)\) is also the uniform distribution. When \(t=1\), we have that
    \[
        P_{\sigma}(\tau) = \frac{1}{|(H \times H)|_{\sigma}}\sum_{(h,g) \in (H \times H)_{\sigma} \cap (H \times H)_\tau}\frac{1}{|S_{pk}^{h,g}|}.
    \]
    If \(T(\tau)=2k\), then \((H \times H)_\tau\) only contains the identity. Thus, for \(\tau \in B\),
    \[
        P_{\sigma}(\tau) = \frac{1}{|(H \times H)_{\sigma}|} \frac{1}{|S_{pk}|}.
    \]
    This shows that \(\tau \mapsto P_{\sigma}(\tau)\) is constant on \(B\) and hence the conditional distribution \(P_\sigma(\cdot \mid B)\) is uniform.
\end{proof}

Lemma~\ref{lem:tv-comparison} is used to compare the mixing time of the lumped chain to the mixing time of the original process. The key idea is that the distance \(\Vert P^t_\sigma - \pi \Vert_{\mathrm{TV}}\) is well approximated by \(\pi(B)-P^t_\sigma(B)\). This approximation is justified by the fact that \(\pi(B) \approx 1\) and by Lemma~\ref{lem:conditional} which states that \(P^t_\sigma(\cdot \mid B)=\pi(\cdot \mid B)\). The proof below formalizes this idea.
\begin{lemma}\label{lem:tv-comparison}
    Fix \(p \ge 11\),  \(t \ge 1\) and \(\sigma \in S_{pk}\). If \(T(\sigma)=a\), then
    \[
        \Vert \overline{P}_{a}^t - \overline{\pi} \Vert_{\mathrm{TV}} \le \Vert P_\sigma^t - \pi \Vert_{\mathrm{TV}} \le \Vert \overline{P}_a^t - \overline{\pi} \Vert_{\mathrm{TV}} + \frac{2p^4}{(p-1)!}
    \]
\end{lemma}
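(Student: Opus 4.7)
The lower bound is the standard lumping inequality: for any $C \subseteq [k:2k]$, the preimage $A = T^{-1}(C)$ satisfies $P^t_\sigma(A) = \overline{P}^t_a(C)$ and $\pi(A) = \overline{\pi}(C)$, so $|\overline{P}^t_a(C) - \overline{\pi}(C)| \le \Vert P^t_\sigma - \pi\Vert_{\mathrm{TV}}$, and taking the supremum over $C$ gives the lower bound. This direction uses nothing beyond the lumping property established in Theorem~\ref{thrm:lump}.

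For the upper bound, let $B = \{\tau \in S_{pk} : T(\tau) = 2k\}$ be the union of the maximal-sized double cosets. The plan is to write the TV sum as
\[
\Vert P^t_\sigma - \pi\Vert_{\mathrm{TV}} = \tfrac{1}{2}\sum_{\tau \in B}|P^t_\sigma(\tau) - \pi(\tau)| + \tfrac{1}{2}\sum_{\tau \in B^c}|P^t_\sigma(\tau) - \pi(\tau)|,
\]
and control the two pieces separately using Lemma~\ref{lem:conditional} and Proposition~\ref{prop:most-large}. Lemma~\ref{lem:conditional} says that both $P^t_\sigma(\,\cdot\mid B)$ and $\pi(\,\cdot\mid B)$ are uniform on $B$, so every term in the first sum has the same value $|\overline{P}^t_a(2k) - \overline{\pi}(2k)|/|B|$, whence the sum equals $|\overline{P}^t_a(2k) - \overline{\pi}(2k)|$. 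For the second sum the triangle inequality gives the trivial bound $\sum_{\tau \in B^c} |P^t_\sigma(\tau) - \pi(\tau)| \le P^t_\sigma(B^c) + \pi(B^c)$; rewriting $P^t_\sigma(B^c) = \pi(B^c) + (P^t_\sigma(B^c) - \pi(B^c))$ and noting $|P^t_\sigma(B^c) - \pi(B^c)| = |\overline{P}^t_a(2k) - \overline{\pi}(2k)|$ bounds this by $2\pi(B^c) + |\overline{P}^t_a(2k)-\overline{\pi}(2k)|$.

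Adding the two pieces, halving, and using $|\overline{P}^t_a(2k) - \overline{\pi}(2k)| \le \Vert \overline{P}^t_a - \overline{\pi}\Vert_{\mathrm{TV}}$ gives
\[
\Vert P^t_\sigma - \pi\Vert_{\mathrm{TV}} \le \Vert \overline{P}^t_a - \overline{\pi}\Vert_{\mathrm{TV}} + \pi(B^c).
\]
Finally, Proposition~\ref{prop:most-large} and $\overline{\pi}(2k) = f(2k;k)/Z$ yield $\pi(B^c) = 1 - \overline{\pi}(2k) \le 2p^4/(p-1)!$ for $p \ge 11$, which gives the stated bound.

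The only genuinely non-routine step is the collapse of $\sum_{\tau \in B}|P^t_\sigma(\tau) - \pi(\tau)|$ from $|B|$ separate terms down to the single-coordinate discrepancy $|\overline{P}^t_a(2k) - \overline{\pi}(2k)|$: a priori one would fear losing a factor of $|B|$ from the triangle inequality, but Lemma~\ref{lem:conditional} forces all the signs to align so the sum telescopes. Everything else is routine manipulation, so once Lemma~\ref{lem:conditional} and Proposition~\ref{prop:most-large} are in hand the proof is short.
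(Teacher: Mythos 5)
Your proof is correct and follows essentially the same approach as the paper's: both decompose total variation over the set $B$ of maximal-size double cosets and its complement, use Lemma~\ref{lem:conditional} to collapse the $B$ contribution to the single lumped coordinate $|\overline{P}^t_a(2k)-\overline{\pi}(2k)|$, and use Proposition~\ref{prop:most-large} to bound the $B^c$ contribution by $\pi(B^c)\le 2p^4/(p-1)!$. The only presentational difference is that the paper works with the $\sup_{A}\{\pi(A)-P^t_\sigma(A)\}$ formulation of TV while you use the $\tfrac12\sum_\tau|\cdot|$ formulation; the underlying argument is the same.
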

\begin{proof}
    The first inequality holds for any lumping of a Markov chain. To prove the second inequality, let \(B=\{\tau : T(\tau)=2k\}\) as in Lemma~\ref{lem:conditional}. Also let \(\varepsilon = \frac{2p^4}{(p-1)!}\). By Proposition~\ref{prop:most-large} we have 
    \[
        \pi(B^c) = \frac{Z-f(2k;k)}{Z} \le \varepsilon.
    \]
    Thus,
    \begin{align*}
        \Vert P_\sigma^t - \pi \Vert_{\mathrm{TV}} & =\sup_{A \subseteq S_{pk}} \{\pi(A)-P_\sigma^t(A)\}                                                       \\
                                                   & =\sup_{A \subseteq S_{pk}} \{\pi(A \cap B)+\pi(A \cap B^c)-P_\sigma^t(A\cap B) - P_\sigma^t(A \cap B^c)\} \\
                                                   & \le\sup_{A \subseteq S_{pk}} \{\pi(A \cap B)-P_\sigma^t(A\cap B)\} + \varepsilon.
    \end{align*}
    By Lemma~\ref{lem:conditional}, for any \(A \subseteq S_{pk}\),
    \begin{align*}
        \pi(A \cap B)-P_\sigma^t(A\cap B) & = \pi(B)\pi(A \mid B) - P_\sigma^t(B)P_\sigma(A \mid B)          \\
                                          & = (\pi(B)-P^t_\sigma(B))\frac{|A \cap B|}{|B|}                   \\
                                          & \le \pi(B)-P^t_\sigma(B)                                         \\
                                          & = \overline{\pi}(2k) - \overline{P}^t_a(2k)                      \\
                                          & \le \Vert \overline{\pi} - \overline{P}^t_a\Vert_{\mathrm{TV}}.
    \end{align*}
    Thus,
    \[ 
        \Vert P_\sigma^t - \pi \Vert_{\mathrm{TV}} \le \sup_{A \subseteq S_{pk}} \{\pi(A \cap B)-P_\sigma^t(A\cap B)\}+\varepsilon \le \Vert \overline{\pi}-\overline{P}_a^t\Vert_{\mathrm{TV}}+\varepsilon.
    \]
\end{proof}

Let \(Q\) be the following transition kernel on \([k:2k]\)
\begin{equation}\label{eq:Q}
    Q(a,b) = \begin{cases}\binom{2k-a}{2k-b}\left(\frac{p-1}{p}\right)^{2k-b}\left(\frac{1}{p}\right)^{b-a} & \text{if } b \ge a, \\
             0                                                                                 & \text{otherwise}.
    \end{cases}
\end{equation}
The transition kernel \(Q\) is upper triangular and \(2k\) is the unique absorbing state. This implies that the stationary distribution of \(Q\) is \(\delta_{2k}\)--a point pass at \(2k\). The next lemma states that transition matrix \(Q\) is a good approximation to the lumped transition matrix \(\overline{P}\). This approximation is based on the fact that most double cosets are large and so the sum in \eqref{eq:lump-P} is dominated by the largest term.

\begin{lemma}\label{lem:Qapprox}
    For all \(t \in \naturals\) and \(a \in [k:2k]\)
    \[
        \Vert \overline{P}^t_a-Q^t_a \Vert_{\mathrm{TV}} \le \frac{t}{(p-2)!}.
    \]
\end{lemma}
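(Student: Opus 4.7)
The plan is to establish the one-step bound
\[
\Vert \overline{P}_a - Q_a \Vert_{\mathrm{TV}} \le \frac{1}{(p-2)!} \quad \text{for every } a \in [k:2k],
\]
and then extend to $t$ steps via the telescoping identity
\[
\overline{P}^t - Q^t = \sum_{s=0}^{t-1} \overline{P}^s \,(\overline{P}-Q)\, Q^{t-1-s}.
\]
Since any Markov kernel is a weak contraction in total variation and $\Vert \mu \overline{P} - \mu Q \Vert_{\mathrm{TV}} \le \sup_x \Vert \overline{P}_x - Q_x \Vert_{\mathrm{TV}}$ for any probability measure $\mu$, the triangle inequality applied to this decomposition bounds each of the $t$ summands by $1/(p-2)!$, yielding the claimed $t/(p-2)!$.

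The one-step bound will come from a coupling through the shared Binomial first stage. By the proof of Theorem~\ref{thrm:lump},
\[
\overline{P}(a,b) = \sum_{y=0}^{2k-a} \Prob(R(g)=y)\,\Prob(T(\tau)=b\mid R(g)=y),
\]
where $R(g) \sim \mathrm{Bin}(2k-a,(p-1)/p)$ by Lemma~\ref{lem:dist-R} and, by Lemma~\ref{lem:dist-T}, the conditional law of $T(\tau)$ given $R(g)=y$ is supported on $\{k,\ldots,2k-y\}$ with mass
\[
q_y := \frac{p^{2(k-y)}\,f(2(k-y);\,k-y)}{((k-y)p)!}
\]
on the top value $2k-y$. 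Applying Theorem~\ref{thrm:most-large} with $k$ replaced by $k-y$ (the edge case $y=k$ being trivial, since then $q_k=1$) gives $q_y \ge 1 - 1/(p-2)!$. Meanwhile $Q$ admits the parallel description of drawing $Y \sim \mathrm{Bin}(2k-a,(p-1)/p)$ and outputting $2k-Y$. Coupling $R(g)=Y$ forces the two outputs to agree on the event $\{T(\tau)=2k-R(g)\}$, so the coupling inequality gives
\[
\Vert \overline{P}_a - Q_a \Vert_{\mathrm{TV}} \le \Prob\bigl(T(\tau) \ne 2k-R(g)\bigr) \le \frac{1}{(p-2)!}.
\]

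The main conceptual point is recognizing that $Q$ is exactly the approximation to $\overline{P}$ obtained by pretending the second Burnside stage always hits the largest available double coset: both kernels inherit the same first-stage Binomial law from Lemma~\ref{lem:dist-R}, so their discrepancy reduces to the probability that the second stage fails to land on $2k-R(g)$, which is uniformly controlled by Theorem~\ref{thrm:most-large}. Once this identification is made the one-step coupling is immediate, and the extension to $t$ steps via telescoping and contractivity is routine.
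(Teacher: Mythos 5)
Your proof is correct and follows the same overall structure as the paper's: reduce to a one-step bound $\Vert \overline{P}_a - Q_a \Vert_{\mathrm{TV}} \le 1/(p-2)!$ and propagate to $t$ steps (the paper phrases the $t$-step step as ``by induction,'' which is your telescoping identity plus contractivity). The one-step bound is established a little differently in presentation: the paper directly shows $\overline{P}(a,b) \ge Q(a,b)\bigl(1-\tfrac{1}{(p-2)!}\bigr)$ for $b \ge a$ by isolating the $y=2k-b$ term in the sum defining $\overline{P}(a,b)$, then bounds $\sum_b (Q(a,b)-\overline{P}(a,b))_+$; you instead recognize that $\overline{P}$ and $Q$ share the same first-stage binomial $R(g)$, couple them through it, and bound the probability that the second stage misses the top value $2k-R(g)$. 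Both hinge on exactly the same input, Theorem~\ref{thrm:most-large} applied to $k-y$, and your coupling phrasing makes the probabilistic content (``$Q$ is the idealization where the second stage always lands maximally'') more transparent; it is a clean and equivalent reformulation rather than a genuinely different argument.
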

\begin{proof}
    By induction, it suffices to show that for all \(a \in [k:2k]\),
    \[
        \Vert \overline{P}_a- Q_a \Vert_{\mathrm{TV}} \le \frac{1}{(p-2)!}.
    \]
    By Theorem~\ref{thrm:lump} we have that for \(b \in [k:2k]\) with \(b \ge a\)
    \begin{align*}
        \overline{P}(a,b) & =\sum_{y=0}^{2k-b}\binom{2k-a}{y} \left(\frac{p-1}{p}\right)^y \frac{p^{a+b-2k} }{(p(k-y))!}f(b-y;k-y) \\
                          & \ge \binom{2k-a}{2k-b} \left(\frac{p-1}{p}\right)^{2k-b}  \frac{p^{a+b-2k}}{(p(b-k))!}f(2(b-k);b-k)
    \end{align*}
    By Theorem~\ref{thrm:most-large}, we have that \(\frac{1}{(p(b-k))!}f(2(b-k);b-k) \ge \frac{1}{p^{2(b-k)}}\left(1-\frac{1}{(p-2)!}\right)\). Thus,
    
    \begin{align*}       
        \overline{P}(a,b) & \ge \binom{2k-a}{2k-b}\left(\frac{p-1}{p}\right)^{2k-b}p^{a+b-2k}\frac{1}{p^{2(b-k)}}\left(1-\frac{1}{(p-2)!}\right) \\
                          & =\binom{2k-a}{2k-b}\left(\frac{p-1}{p}\right)^{2k-b}\left(\frac{1}{p}\right)^{b-a}\left(1-\frac{1}{(p-2)!}\right)    \\
                          & =Q(a,b)\left(1-\frac{1}{(p-2)!}\right).
    \end{align*}
    And for \(b < a\), \(\overline{P}(a,b) \ge 0 = Q(a,b)\). Thus,
    \begin{align*}
        \Vert \overline{P}_a -Q_a \Vert_{\mathrm{TV}}                                   = & \sum_{b=0}^k(Q(a,b)-\overline{P}(a,b))_+                                  \\
        =                                                                                 & \sum_{b=a}^k(Q(a,b)-\overline{P}(a,b))_+                                  \\
        \le                                                                               & \sum_{b=a}^k\left(Q(a,b) - \left(1-\frac{1}{(p-2)!}\right)Q(a,b)\right)_+ \\
        =                                                                                 & \frac{1}{(p-2)!}\sum_{b=a}^k Q(a,b)                                       \\
        =                                                                                 & \frac{1}{(p-2)!}
    \end{align*}
\end{proof}
The next lemma states that \(\overline{\pi}\) is well-approximated by \(\delta_{2k}\). That is the stationary distributions of \(\overline{P}\) and \(Q\) are close to each other.
\begin{lemma}\label{lem:pi_approx}
    If \(p \ge 11\), then 
    \[
        \Vert \overline{\pi}-\delta_{2k}\Vert_{\mathrm{TV}} \le \frac{2 p^4}{(p-1)!}
    \]
\end{lemma}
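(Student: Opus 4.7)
The plan is to reduce the statement directly to Proposition~\ref{prop:most-large}, using the basic fact that the total variation distance between a probability measure and a point mass has a particularly simple closed form.

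First I would note that for any distribution $\mu$ on the state space $[k:2k]$ and any point $a_0$ in that space,
\[
    \Vert \mu - \delta_{a_0}\Vert_{\mathrm{TV}} = 1 - \mu(a_0),
\]
which follows immediately from the identity $\Vert \mu - \nu \Vert_{\mathrm{TV}} = \frac{1}{2}\sum_y |\mu(y) - \nu(y)|$ upon splitting the sum into the $y = a_0$ term and the rest. Applying this with $\mu = \overline{\pi}$ and $a_0 = 2k$ gives $\Vert \overline{\pi} - \delta_{2k}\Vert_{\mathrm{TV}} = 1 - \overline{\pi}(2k)$.

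Next I would substitute the formula for $\overline{\pi}$ from Theorem~\ref{thrm:lump}, namely $\overline{\pi}(2k) = f(2k;k)/Z$, to rewrite the bound as
\[
    \Vert \overline{\pi} - \delta_{2k}\Vert_{\mathrm{TV}} = \frac{Z - f(2k;k)}{Z}.
\]
By Proposition~\ref{prop:most-large}, for $p \ge 11$ prime and $k < p$ we have $f(2k;k) \ge Z(1 - 2p^4/(p-1)!)$, which rearranges to $(Z - f(2k;k))/Z \le 2p^4/(p-1)!$, yielding the desired bound.

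There is no real obstacle here; the entire statement is a two-line computation once Proposition~\ref{prop:most-large} is in hand, since the only nontrivial ingredient---that the lumped stationary distribution concentrates on the top state---is exactly what that proposition provides. The work was already done in establishing Proposition~\ref{prop:most-large}; this lemma is just packaging it in the form needed for the Markov chain approximation argument.
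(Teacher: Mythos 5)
Your proposal is correct and matches the paper's proof exactly: both compute $\Vert \overline{\pi}-\delta_{2k}\Vert_{\mathrm{TV}} = 1-\overline{\pi}(2k) = (Z-f(2k;k))/Z$ and then apply Proposition~\ref{prop:most-large}. The only difference is that you spell out the elementary identity for total variation distance to a point mass, which the paper leaves implicit.
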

\begin{proof}
    By Proposition~\ref{prop:most-large} we have
    \[
        \Vert \overline{\pi}-\delta_{2k}\Vert_{\mathrm{TV}} = 1-\overline{\pi}(2k) = \frac{Z-f(2k;k)}{Z} \le \frac{2p^4}{(p-1)!}.
    \]
\end{proof}
The chain \(Q\) is much easier to analyze than \(\overline{\pi}\). In particular, the time it takes \(Q\) to reach \(2k\) is exactly equal in distribution to the maximum of several independent geometric random variables.
\begin{lemma}
    \label{lem:Qmix}
    For all \(t \in \naturals\) and \(a \in [k:2k]\)
    \[
        \Vert Q_a^t - \delta_{2k}\Vert_{\mathrm{TV}} = 1 - Q_a^t(2k) = 1-\left(1-\left(1-\frac{1}{p}\right)^t\right)^{2k-a}.
    \]
\end{lemma}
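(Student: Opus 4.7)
The plan is to realize the Markov chain driven by $Q$, started from $a$, as a deterministic function of $2k-a$ independent Geometric$(1/p)$ random variables, and then read off the hitting probability of $2k$ from their order statistics. This matches the intuition already stated in Section~\ref{sec:most-large}: once a ``coin'' lands heads it stays heads, and we reach state $2k$ precisely when every coin has landed heads at least once.

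First I would set $n_0 := 2k - a$ and let $T_1, \ldots, T_{n_0}$ be independent Geometric$(1/p)$ random variables, so that $\Prob(T_i > t) = (1-1/p)^t$ for every $t \ge 0$. Define $X_t := 2k - |\{i : T_i > t\}|$, so $X_0 = a$. I would then check that $(X_t)_{t \ge 0}$ is a Markov chain with transition kernel $Q$: conditional on $X_t = b$, exactly $2k-b$ of the $T_i$ satisfy $T_i > t$, and by the memoryless property each of these independently satisfies $T_i \le t+1$ with probability $1/p$. Hence, given $X_t = b$, the increment $X_{t+1} - X_t$ is Binomial$(2k-b,\,1/p)$, which matches the definition of $Q(b,\cdot)$ in \eqref{eq:Q} after the substitution $x = b'-b$ and $2k-b' = (2k-b)-x$.

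Next, $X_t = 2k$ if and only if every $T_i \le t$, so by independence
\[
Q_a^t(2k) = \Prob\left(\max_{1 \le i \le n_0} T_i \le t\right) = \prod_{i=1}^{n_0} \Prob(T_i \le t) = \left(1 - (1-1/p)^t\right)^{2k-a}.
\]
The remaining equality $\Vert Q_a^t - \delta_{2k}\Vert_{\mathrm{TV}} = 1 - Q_a^t(2k)$ is automatic: because $\delta_{2k}$ is a point mass at $2k$, the total variation distance reduces to the $Q_a^t$-mass placed on $[k:2k-1]$, which is exactly $1 - Q_a^t(2k)$.

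The only step requiring genuine verification is the identification of $Q$ with the binomial transition above, and this is a direct consequence of the memoryless property of the geometric distribution; I do not anticipate any real obstacle. An entirely equivalent alternative would be to prove the closed form for $Q_a^t(2k)$ by induction on $t$, collapsing the one-step recursion via the binomial theorem and the identity $\tfrac{p-1}{p}(1-(1-1/p)^t) + \tfrac{1}{p} = 1-(1-1/p)^{t+1}$---essentially the same calculation, but without the probabilistic coupling.
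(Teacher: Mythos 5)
Your proposal is correct and follows essentially the same route as the paper: both realize the $Q$-chain as tracking the number of a fixed collection of $2k-a$ independent Geometric$(1/p)$ ``coin'' times that have already struck, so that hitting state $2k$ corresponds to the maximum of these geometrics being at most $t$, and the total-variation distance to the point mass $\delta_{2k}$ collapses to $1 - Q_a^t(2k)$. Your write-up is a touch more explicit in defining $X_t = 2k - |\{i : T_i > t\}|$ up front and verifying the Markov property via memorylessness, but this is the same coupling the paper uses.
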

\begin{proof}
    We will prove this result by giving an equivalent description of the chain \(Q\). Suppose that \((X_t)_{t \ge 0}\) is a Markov chain with transition kernel \(Q\) and \(X_0=a\). Then, given \(X_t=b\), \(X_{t+1}\) is equal in distribution to \(a\) plus a binomial variable with parameters \(2k-b\) and \(1/p\). The process \(X_t\)
    \begin{enumerate}
        \item Set \(X_0=a\) and start with \(2k-a\) independent coins each with probability of heads equal to \(1/p\).
        \item For each \(t\ge t\), flip all the coins available coins and remove the ones that land on heads. Let \(X_t\) be equal to \(X_{t-1}\) plus the number of removed coins.
    \end{enumerate}
    From the above description, \(X_t=2k\) is equivalent to all \(2k-a\) coins landing on heads in the first \(t\) steps. This gives the equality
    \begin{equation}\label{eq:geoms}
        \Prob(X_{t}=2k) =\Prob(\max\{Z_j : 1 \le j \le 2k-a\} \le t),
    \end{equation}
    where \(Z_j\) is the time it takes the \(j\)th coin to land on heads. Thus, \(\{Z_j\}_{j=1}^{2k-a}\) are independent geometric random variables with \(\Prob(Z_j > t) = \left(1-\frac{1}{p}\right)^t\). 
    
    By independence, we have
    \begin{align*}
        \Vert Q^t_0 - \delta_{2k} \Vert_{\mathrm{TV}} & = 1-Q_0^t(2k)                                           \\
                                                      & = 1 - \Prob(X_t=2k)                                     \\
                                                      & = 1 - \Prob(\max\{Z_j : 1\le j \le 2k-a\} \le t)        \\
                                                      & =1-\Prob(Z_1\le t)^{2k-a}                               \\
                                                      & =1-\left(1-\left(1-\frac{1}{p}\right)^t\right)^{2k-a},
    \end{align*}
    as claimed.
\end{proof}
We are now ready to prove Theorem~\ref{thrm:bound}. That is, for all \(\sigma \in S_{pk}\) and \(t \ge 1\), if \(T(\sigma)=a\) and \(p \ge 11\), then
\begin{equation}\label{eq:bound-variable-sigma}
    \left\vert \Vert P^t_\sigma -\pi\Vert_{\mathrm{TV}} - 1 + \left(1-\left(1-\frac{1}{p}\right)^t\right)^{2k-a}\right\vert \le \frac{4p^4}{(p-1)!} + \frac{t}{(p-2)!}.
\end{equation}
\begin{proof}[Proof of Theorem~\ref{thrm:bound}]
    Note that
    \[
        \left\vert \Vert P^t_\sigma -\pi\Vert_{\mathrm{TV}} - \Vert Q_a^t - \delta_{2k}\Vert_{\mathrm{TV}}\right\vert \le \left\vert \Vert P^t_\sigma -\pi\Vert_{\mathrm{TV}} - \Vert \overline{P}_a^t - \overline{\pi}\Vert_{\mathrm{TV}}\right\vert+\left\vert \Vert \overline{P}_a^t - \overline{\pi}\Vert_{\mathrm{TV}} - \Vert Q_a^t - \delta_{2k}\Vert_{\mathrm{TV}}\right\vert,
    \]
    and, by Lemma~\ref{lem:tv-comparison}
    \[
        \left\vert \Vert P^t_\sigma -\pi\Vert_{\mathrm{TV}} - \Vert \overline{P}_a^t - \overline{\pi}\Vert_{\mathrm{TV}}\right\vert \le \frac{2p^4}{(p-1)!}.
    \]
    Furthermore, by Lemma~\ref{lem:Qapprox} and \ref{lem:pi_approx}, we have
    \begin{align*}
        \left\vert \Vert \overline{P}_a^t - \overline{\pi}\Vert_{\mathrm{TV}} - \Vert Q_a^t - \delta_{2k}\Vert_{\mathrm{TV}}\right\vert 
        \le & \Vert \overline{P}_a^t - Q_a^t \Vert_{\mathrm{TV}} + \Vert \overline{\pi}-\delta_{2k}\Vert_{\mathrm{TV}} \\
        \le & \frac{t}{(p-2)!} + \frac{2p^4}{(p-1)!}.
    \end{align*}
    Thus,
    \[
        \left\vert \Vert P^t_\sigma -\pi\Vert_{\mathrm{TV}} - 1 + \left(1-\left(1-\frac{1}{p}\right)^t\right)^{2k-a}\right\vert = \left\vert \Vert P^t_\sigma -\pi\Vert_{\mathrm{TV}} - \Vert Q_a^t - \delta_{2k}\Vert_{\mathrm{TV}}\right\vert \le \frac{4p^4}{(p-1)!} + \frac{t}{(p-2)!},
    \]
    As claimed in Theorem~\ref{thrm:bound}.
\end{proof}

\subsection{Proof of Theorem~\ref{thrm:mix}}

Theorem~\ref{thrm:mix} is a straight forward consequence of Theorem~\ref{thrm:bound} and the following lemma.

\begin{lemma}\label{lem:Qlimit}
    Let \(p=p_m\) and \(k=k_m\) be two sequences with \(p_m \to \infty\). Then,
    \begin{enumerate}
        \item If \(k_m \to k< \infty\) and \(c \ge 0\), then
              \[
                  \left(1-\left(1-\frac{1}{p}\right)^{\lfloor pc\rfloor}\right)^k = (1-e^{-c})^k.
              \]
        \item If \(k_m \to \infty\) and \(c \in \reals\), then
              \[
                  \left(1-\left(1-\frac{1}{p}\right)^{\lfloor p\log k + pc\rfloor}\right)^{k} = \exp(-e^{-c}).
              \]
    \end{enumerate}
\end{lemma}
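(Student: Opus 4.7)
Both parts reduce to careful asymptotic control of $(1-1/p)^{p\alpha}$ as $p\to\infty$. The basic identity I will use throughout is
\[
\log\!\left(1-\frac{1}{p}\right) = -\frac{1}{p} - \frac{1}{2p^2} + O\!\left(\frac{1}{p^3}\right),
\]
so for any sequence of exponents $N = N_m$ with $N/p \to \alpha$ (finite), one has $N \log(1-1/p) \to -\alpha$, and hence $(1-1/p)^N \to e^{-\alpha}$.

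\textbf{Part 1 ($k$ eventually constant).} Here I just apply the remark above with $N = \lfloor pc \rfloor$, so $N/p \to c$ and $(1-1/p)^{\lfloor pc\rfloor} \to e^{-c}$. Since $k$ is eventually a fixed positive integer, raising to the $k$-th power is continuous and gives $(1-e^{-c})^k$. This part is essentially one line.

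\textbf{Part 2 ($k\to\infty$).} Now the exponent $N = \lfloor p\log k + pc\rfloor$ grows with $p$ because $\log k$ can, so I need a sharper expansion. Writing $N = p\log k + pc + O(1)$ and using the logarithm expansion above,
\[
N \log\!\left(1-\frac{1}{p}\right) = -(\log k + c)\left(1 + O(1/p)\right) + O(1/p) = -\log k - c + o(1),
\]
where the $o(1)$ is justified by the assumption $k < p$ (so $\log k = o(p)$, and in fact $\log k / p \to 0$ regardless of the rate at which $k$ diverges). Exponentiating,
\[
\left(1-\frac{1}{p}\right)^{\lfloor p\log k + pc\rfloor} = \frac{e^{-c}}{k}\,(1+o(1)).
\]
Therefore
\[
1 - \left(1-\frac{1}{p}\right)^{\lfloor p\log k + pc\rfloor} = 1 - \frac{a_k}{k}, \qquad a_k \to e^{-c}.
\]
Raising to the $k$-th power and invoking the standard fact that $(1 - a_k/k)^k \to e^{-a}$ whenever $a_k \to a$ (which follows from $k\log(1-a_k/k) = -a_k + O(a_k^2/k) \to -a$), I obtain the claimed limit $\exp(-e^{-c})$.

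\textbf{Main obstacle.} The only subtle point is tracking the error in Part 2: the exponent $N$ grows like $p\log k$, so a naive use of $(1-1/p)^p \to e^{-1}$ is not enough, and one must carry the $O(1/p)$ correction through the logarithm. The hypothesis $k_m < p_m$ is exactly what ensures $\log k / p \to 0$ and hence that this correction is negligible; without such a bound on $k$ versus $p$ the expansion would break down.
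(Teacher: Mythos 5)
Your argument is correct and follows essentially the same route as the paper: expand $\log(1-1/p)$, note that the hypothesis $k<p$ forces $\log k / p \to 0$ (so the second-order term in the logarithm is negligible), and then pass to the limit of $(1-a_k/k)^k$. The only difference is presentational — the paper runs the same idea through an explicit non-asymptotic sandwich $1-e^{-(x-1)/p} \le 1-(1-1/p)^{\lfloor x\rfloor} \le 1-e^{-x(1/p+1/p^2)}$ rather than $O(\cdot)/o(\cdot)$ notation, but the content is identical.
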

\begin{proof}
    The first claim is immediate. For the second claim note that for any \(p \ge 2\) and \(x \ge 0\), 
    \begin{align*}
        1-\exp\left(-\frac{x-1}{p}\right) \le 1-\left(1-\frac{1}{p}\right)^{\lfloor x\rfloor} \le 1-\exp\left(-x\left(\frac{1}{p}-\frac{1}{p^2}\right)\right)
    \end{align*}
    If \(x = p\log k + pc\) and \(p,k\to\infty\) with \(k \le p-1\), then
    \begin{align*}
        \left(1-\exp\left(-\frac{x-1}{p}\right)\right)^k             & =\left(1-\frac{e^{-c-1/p}}{k}\right)^k \to \exp(-e^{-c})              \\
        \left(1-\exp\left(-\frac{x}{p}-\frac{x}{p^2}\right)\right)^k & =\left(1-\frac{e^{-c-(\log k + c)/p}}{k}\right)^k \to \exp(-e^{-c}),
    \end{align*}
    And so \(\left(1-\left(1-\frac{1}{p}\right)^{\lfloor x\rfloor}\right)^{k} \to \exp({-e^{-c}})\).
\end{proof}

\begin{proof}[Proof of Theorem~\ref{thrm:mix}]
    Theorem~\ref{thrm:bound} gives
    \begin{align*}
        \left| \Vert P^t_\sigma -\pi\Vert_{\mathrm{TV}}- 1 + \left(1-\left(1-\frac{1}{p}\right)^t\right)^{2k-a}\right| \le \frac{4p^4}{(p-1)!} + \frac{t}{(p-2)!},
    \end{align*}
    where \(a = T(\sigma)\). Since \(a \in [k:2k]\), this implies that
    \begin{align*}
        \left| d(t)- 1 + \left(1-\left(1-\frac{1}{p}\right)^t\right)^{k}\right| \le \frac{4p^4}{(p-1)!} + \frac{t}{(p-2)!}.
    \end{align*}
    Next note that if \(t = \lfloor cp\rfloor \) or \(t = \lfloor p\log k + cp\rfloor\) for some fixed \(c\), then \(\frac{4p^4}{(p-2)!}+\frac{t}{(p-2)!} \to 0\) as \(p \to \infty\). Thus, for \(t =\lfloor cp\rfloor\) or \(t=\lfloor p \log k + cp\rfloor\), we have that
    \[
        \lim_{p \to \infty} d(t) =1-\lim_{p \to \infty}\left(1-\left(1-\frac{1}{p}\right)^t\right)^{k}.
    \]
    Thus, Lemma~\ref{lem:Qlimit} implies Theorem~\ref{thrm:mix}.
\end{proof}

\section{Examples and simulations}\label{sec:examples}

In this section, Theorems~\ref{thrm:mix} and \ref{thrm:bound} are compared to an exact computation in the case when \(k=1\) and to simulations with moderate values of \(p\) and \(k\).

\subsection{Exact computation when \(k=1\)}

In the case when \(k=1\), all Sylow \(p\)-double cosets either have size \(p\) or size \(p^2\). Theorem~\ref{thrm:formula} from \cite{diaconis2025number} shows that there are \(n_1=p-1\) double cosets of size \(p\) and \(n_2 = \frac{(p-1)!-(p-1)}{p}\) of size \(p^2\). The stationary distribution of the Sylow--Burnside process in this case is
\[
    \pi(\sigma) = \begin{cases}
        \frac{1}{(n_1+n_2)p^2} & \text{if }|H\sigma H|=p^2, \\
        \frac{1}{(n_1+n_2)p}   & \text{if } |H\sigma H|=p.
    \end{cases}
\]
In Section~\ref{appn:exact} of the appendix, it is shown that the entries of the transition kernel are given by
\[
    P(\sigma,\tau) = \begin{cases}
        \frac{1}{p!}                  & \text{if } |H\sigma H| = p^2,                                   \\
        \frac{1}{pp!}                 & \text{if } |H\sigma H| = p \text{ and } H\sigma H \neq H\tau H, \\
        \frac{p-1}{p^2}+\frac{1}{pp!} & \text{if } |H\sigma H|=p \text{ and } H\sigma H = H\tau H.
    \end{cases}
\]
Section~\ref{appn:exact} also contains an eigenvalue decomposition of \(P\). The transition kernel has four distinct eigenvalues. These eigenvalues are \(1\), \(1-\frac{1}{p}\), \(1-\frac{1}{p}-\frac{p-1}{p(p-2)!}\) and \(0\) with multiplicities \(
1, p-2, 1\) and \(p!-p\) respectively. From the eigenvalue decomposition, it is possible to exactly compute \(\Vert P^t_\sigma - \pi \Vert_{\mathrm{TV}}\). Section~\ref{appn:exact} proves that for all \(\sigma \in S_p\)
\begin{equation}
    \Vert P^t_\sigma - \pi \Vert_{\mathrm{TV}} =\begin{cases}
        \left(1-\frac{1}{n_1}\right)\left(1-\frac{1}{p}\right)^t + \frac{n_2}{n_1(n_1+n_2)}\left(1-\frac{1}{p}-\frac{p-1}{p(p-2)!}\right)^t & \text{if } |H\sigma H| = p.  \\
        \frac{n_1}{n_1+n_2}\left(1-\frac{1}{p}-\frac{p-1}{p(p-2)!}\right)^t                                                                 & \text{if } |H\sigma H|=p^2.
    \end{cases}\label{eq:exact}
\end{equation}
If \(p\) is even moderately large, then \(n_2 \gg n_1\) and \((p-2)! \gg p\). This gives
\[
    \frac{n_2}{n_1(n_1+n_2)}\left(1-\frac{1}{p}-\frac{p-1}{p(p-2)!}\right)^t \approx \frac{1}{n_1}\left(1-\frac{1}{p}\right)^t, \quad \frac{n_1}{n_1+n_2}\approx 0.
\]
Thus, in the \(k=1\) case we can derive the approximation:
\begin{align*}
    \Vert P^t_\sigma - \pi \Vert_{\mathrm{TV}} & \approx \begin{cases}
                                                             \left(1-\frac{1}{p}\right)^t & \text{if } |H\sigma H|=p,    \\
                                                             0                            & \text{if } |H\sigma H|=p^2.
                                                         \end{cases}
\end{align*}
Which agrees with the approximation in Theorem~\ref{thrm:bound} when \(k=1\).
\subsection{Simulations}\label{sec:sim}
By running the algorithms in Section~\ref{sec:running} we can empirically study the Sylow--Burnside process and estimate \(\Vert P_\sigma^t - \pi \Vert_{\mathrm{TV}}\) for moderate \(p\) and \(k\). Figure~\ref{fig:sim} shows the result of one such simulation. In the figure, \(\Vert P_\sigma^t - \pi \Vert_{\mathrm{TV}}\) is estimated by generating \(B\) independent realizations of the Sylow--Burnside process initialized at the identity permutation and run for \(t_{\max}\) steps. This gives \(B\) sequences of permutations \((\sigma_t^{(b)})_{t=1}^{t_{\max}}\) for \(b=1,\ldots,B\). For each \(t\), the permutations \(\sigma_t^{(b)}\) are used to construct an empirical measure \(\hat\mu_t\) on \([k:2k]\) given by
\[
    \hat\mu_t(a) = \frac{1}{B}\sum_{b=1}^B I[T(\sigma_t^{(b)})=a].
\]
The measure \(\hat\mu_t\) is an approximation to the measure \(\overline{P}^t_k\)--the probability distribution induced by taking \(t\) steps of the lumped Burnside process started at \(k\). Figure~\ref{fig:sim} shows \(\Vert \hat\mu_t - \overline{\pi}\Vert_{\mathrm{TV}}\) as a function of \(t\) and approximations based on Theorems~\ref{thrm:mix} and \ref{thrm:bound}. The approximation from Theorem~\ref{thrm:bound} is especially accurate. 

\begin{figure}
    \centering
    \begin{subfigure}{0.48\textwidth}
        \centering
        \includegraphics[width=\textwidth]{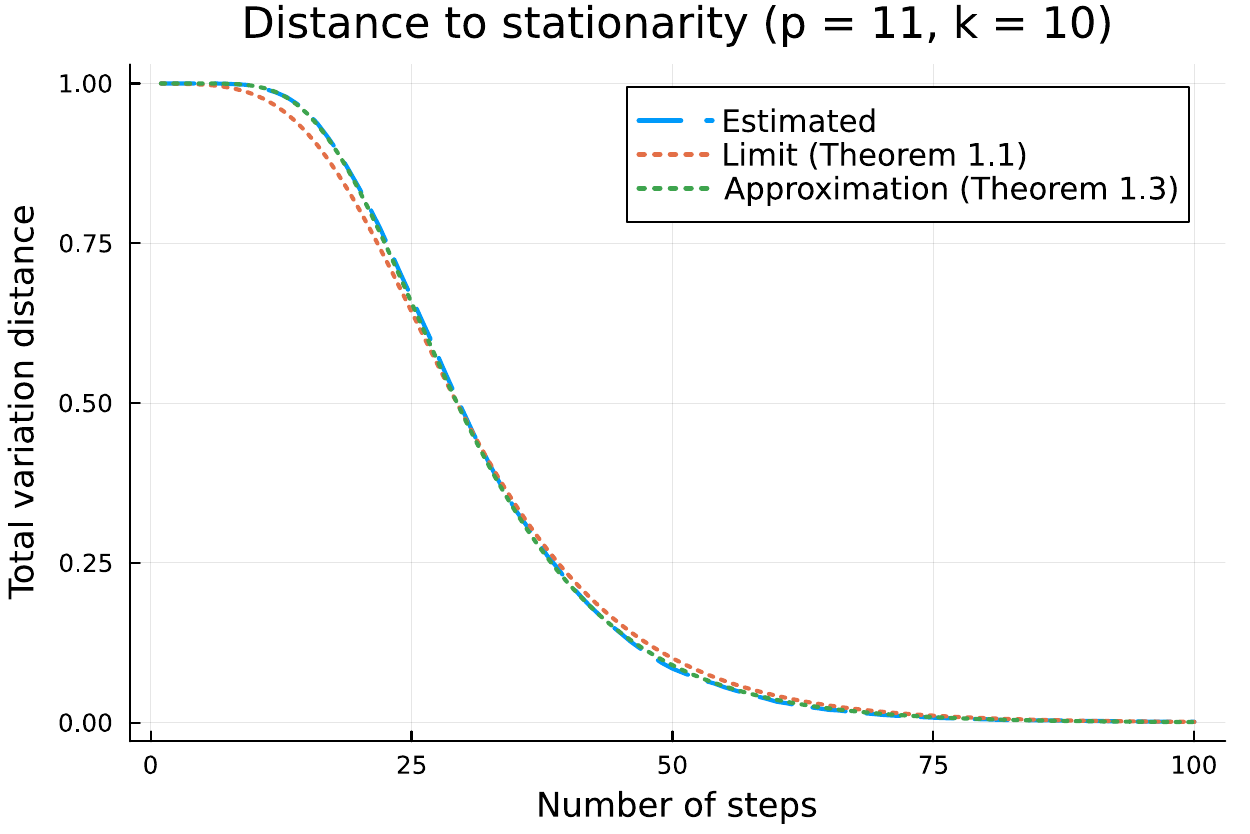}
        \caption{}
    \end{subfigure}
    \hfill
    \begin{subfigure}{0.48\textwidth}
        \centering
        \includegraphics[width=\textwidth]{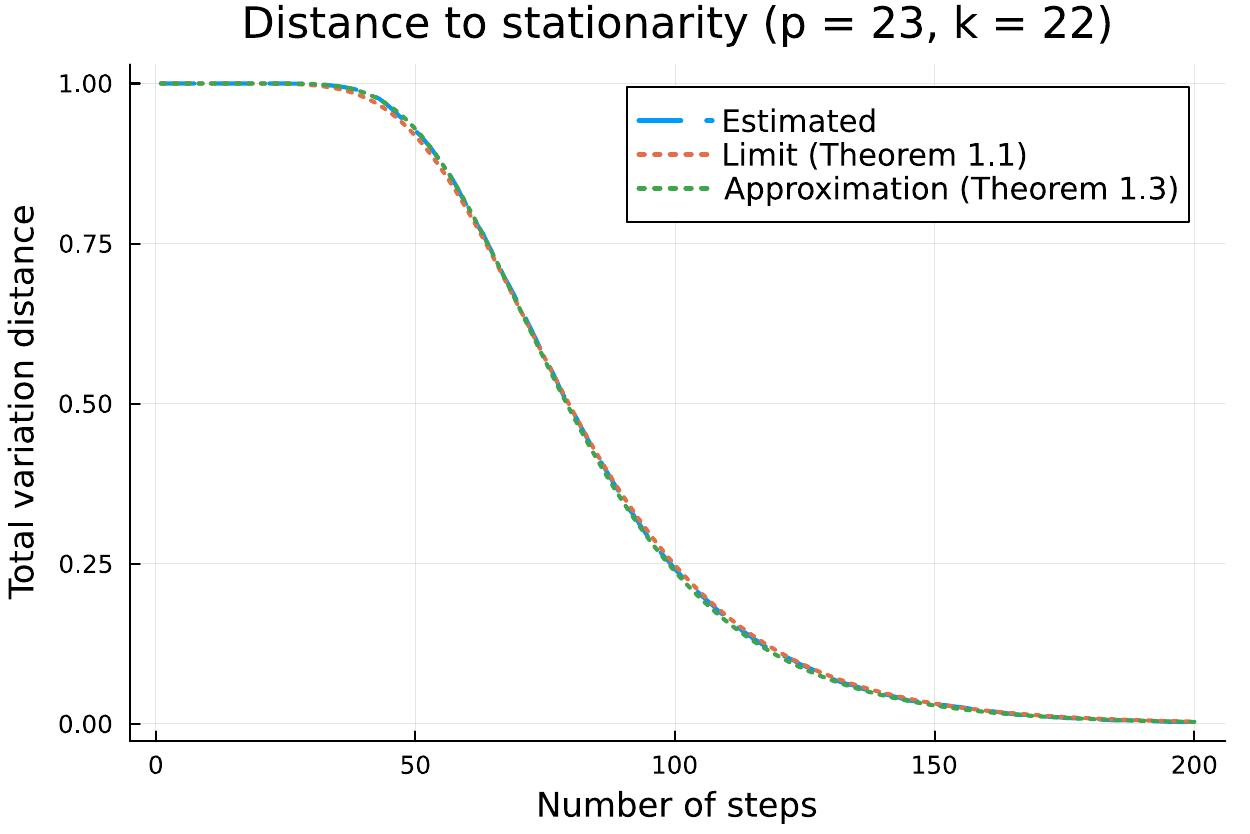}
        \caption{}
    \end{subfigure}
    \caption{Both figures show a plot of \(\Vert \hat\mu_t - \overline{\pi}\Vert_{\mathrm{TV}}\) as a function of \(t\) for different values of \(p\) and \(k\). In both cases, the empirical measure \(\hat\mu_t\) is estimated based on \(B=10,000\) runs of the Sylow--Burnside process. The plots also show the limit profile \(t \mapsto 1-\exp(-e^{-c})\) where \(c = (t-p\log k)/p\) from Theorem~\ref{thrm:mix} and the approximation \(t \mapsto 1-(1-(1-1/p)^t)^k\) from Theorem~\ref{thrm:bound}. Both approximations are accurate when \(p=23\) and \(k=22\) but only Theorem~\ref{thrm:bound} is accurate for \(p=11\) and \(k=10\). The high accuracy of Theorem~\ref{thrm:bound} is reflected in the fact that the error term goes to zero super-exponential fast in \(p\). The code use to create Figure~\ref{fig:sim} is available at \url{https://github.com/Michael-Howes/BurnsideProcess/blob/main/Examples/sylow_double_cosets_example.ipynb}}
    \label{fig:sim}
\end{figure}

\section{Related Markov chains}\label{sec:related}

The present argument gives a new approach to the analysis of Burnside processes. Previous efforts used Doeblin conditions \cite{jerrum1993uniform, diaconis2005analysis}, explicit diagonalization \cite{diaconis2020hahn, diaconis2025curiously} and coupling \cite{aldous2014reversible, paguyo2022mixing, rahmani2022mixing}. 

The approach used here is most similar to that of \emph{strong stationary times} \cite{aldous1986shuffling, diaconis1990strong}. Lemmas~\ref{lem:conditional} and Lemma~\ref{lem:tv-comparison} can be restated as follows. Let \((\sigma_t)_{t \ge 0}\) be generated according to the Sylow--Burnside process and let \(T\) be the smallest \(t\) such that  \(\sigma_t\) is in a large double coset. Then, the random permutation \(\sigma_T\) is independent of \(T\) and approximately distributed according to \(\pi\). If \(\sigma_T\) was exactly distributed according to the stationary distribution \(\pi\), then \(T\) would be a strong stationary time and we would have the bound
\begin{equation} \label{eq:stationary-time}
    \Vert P_{\sigma}^t - \pi \Vert_{\mathrm{TV}} \le \Prob(T > t \mid \sigma_0=\sigma)
\end{equation}
see \cite{aldous1986shuffling, diaconis1990strong} or \cite[Chapter~6]{Levin}. Since \(\sigma_T\) is only approximately distributed according to \(\pi\), \eqref{eq:stationary-time} holds up to a small error. Controlling this error and approximating \(\Prob(T > t\mid \sigma_0=\sigma)\) is essentially how Theorem~\ref{thrm:bound} was proved.

The analysis of the Sylow--Burnside process is also reminiscent of a natural Markov chain for sampling \(k\)-dimesional subspaces of an \(n\)-dimension vector space over the finite field \(\mathbb{F}_q\). This Markov chain was analyzed in \cite{d1995nearest}. The Markov chain lumps distance to the initial subspace and most of the subspaces are at the maximal distance \(k\). The distance typically increases and so \(k\) steps are necessary and sufficient. It would be interesting to investigate the connection with \cite{d1995nearest} further. One direction would be to study the Burnside process for Sylow \(p\)-double cosets in the matrix group \(GL_n(\mathbb{F}_q)\).

\section{The non-Abelian case}\label{sec:non-abelian}

We end with some conjectures and questions about the behavior of the Sylow--Burnside process in the non-Abelian case when \(n \ge p^2\). Even answering these questions for \(n=p^2\) is an interesting problem.

The main challenge appears to be generalizing Section~\ref{sec:running} on the implementation of the Burnside process. Although some aspects of Section~\ref{sec:running} go through, it seems difficult to describe the subgroup \(H \cap \sigma^{-1} H\sigma\). If there is a method for uniformly sampling from \(H \cap \sigma^{-1}H\sigma\), then the algorithms in Section~\ref{sec:running} can be easily adapted to run the Sylow--Burnside process in general. Unfortunately, even computing \(|H \cap \sigma^{-1} H \sigma|\) seems hard.

One important property about the Abelian case was a simple description of the distribution of the cycle type of \(g\) when \(g\) was uniformly sampled from \(H \cap \sigma H \sigma^{-1}\) (Lemma~\ref{lem:dist-R}). This leads to the following question. 

\begin{question}\label{ques:intersects}
    Let \(H\) be a Sylow \(p\)-subgroup of \(S_n\). Does there exist a simple description of the cycle type of \(g\) when \(g\) is uniformly sampled from \(H \cap \sigma^{-1}H\sigma\)? Relatedly, is there a formula for \(|H\cap \sigma^{-1}H\sigma|\) or a simple algorithm for sampling from \(H \cap \sigma^{-1}H\sigma\)?
\end{question}

Assuming that one can sample from \(H \cap \sigma^{-1}H\sigma\) and hence run the Sylow--Burnside process, we have the following conjectures about the behavior of the Markov chain. These two conjectures correspond to the two main tools used in the Abelian case as described in Section~\ref{sec:proof-overview}.

\begin{conjecture}\label{conj:lump}
    Let \(P\) be the transition kernel for the Sylow--Burnside process and let \(T\) be a function that records the conjugacy type of \(H \cap \sigma^{-1}H\sigma\) as a subgroup of \(S_n\), then \(P\) lumps under \(T\). 
\end{conjecture}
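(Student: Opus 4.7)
The plan is to verify Dynkin's criterion. Fix $\sigma_1, \sigma_2 \in S_n$ with $K_{\sigma_i} := H \cap \sigma_i^{-1}H\sigma_i$ conjugate in $S_n$, and fix a conjugacy class $\mathcal{C}$ of subgroups of $S_n$; it suffices to show $P_{\sigma_1}(\{\tau : T(\tau) = \mathcal{C}\}) = P_{\sigma_2}(\{\tau : T(\tau) = \mathcal{C}\})$. The first step is to condition on the pair $(h, g)$ sampled in the first step of the Burnside process. Using the bijection $g \mapsto (\sigma g\sigma^{-1}, g)$ from $K_\sigma$ onto $(H\times H)_\sigma$ and the identity $|S_n^{h,g}| = |C_{S_n}(g)|$ (valid whenever this set is nonempty), a direct computation gives
\[
    P_\sigma(\{\tau : T(\tau) = \mathcal{C}\}) = \frac{1}{|K_\sigma|}\sum_{g \in K_\sigma}\Prob_{g,\sigma}(T(\tau) = \mathcal{C}),
\]
where $\Prob_{g,\sigma}$ is the law of $\tau = \sigma c$ for $c$ uniform in $C_{S_n}(g)$, so that $K_\tau = H \cap c^{-1}(\sigma^{-1}H\sigma)c$.

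The crux of the argument is to show that $\Prob_{g,\sigma}(T(\tau) = \mathcal{C})$ depends on $(g, \sigma)$ only through the $S_n$-conjugacy class of $g$. Since any two Sylow $p$-subgroups of $S_n$ are conjugate and $c$ ranges over all of $C_{S_n}(g)$, one expects the random Sylow subgroup $c^{-1}(\sigma^{-1}H\sigma)c$ to sweep out the Sylow $p$-subgroups of $S_n$ containing $g$ with a canonical distribution determined by $g$ and the fixed subgroup $H$ alone. Granting this claim, pick any $\pi \in S_n$ with $\pi K_{\sigma_1}\pi^{-1} = K_{\sigma_2}$: the bijection $g \mapsto \pi g \pi^{-1}$ from $K_{\sigma_1}$ to $K_{\sigma_2}$ preserves the $S_n$-conjugacy class of each element, so the two averages in the displayed formula agree and Dynkin's criterion is verified.

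The hard part will be the crux claim, which is closely tied to Question~\ref{ques:intersects}. Making it precise requires a structural understanding of Sylow $p$-intersections in $S_n$, namely the joint distribution of $(g, H \cap c^{-1}Pc)$ as $P$ varies over Sylow subgroups containing $g$ and $c$ varies in $C_{S_n}(g)$. A natural first test case is $n = p^2$, where $H \cong C_p \wr C_p$ has a fairly explicit subgroup lattice; verifying (or refuting) the conjecture there should either indicate that the statement holds in general or point to a necessary refinement, such as lumping by the finer invariant of the $N_{S_n}(H)$-conjugacy class of $K_\sigma$ rather than the $S_n$-conjugacy class.
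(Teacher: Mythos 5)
This statement is labeled a conjecture in the paper, and no proof is given for it there, so there is no paper argument to compare against.

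What you have written is not a proof but an honest reduction, and the reduction itself is carried out correctly. The passage from $P_\sigma(\{\tau : T(\tau)=\mathcal{C}\})$ to an average over $g\in K_\sigma$ of $\Prob_{g,\sigma}(T(\tau)=\mathcal{C})$ is right: the bijection $g\mapsto(\sigma g\sigma^{-1},g)$ and the identification of $S_n^{h,g}$ (when nonempty) with a coset $\sigma\,C_{S_n}(g)$ are exactly the facts the paper uses in Section~\ref{sec:running} and in Lemmas~\ref{lem:dist-R}--\ref{lem:dist-T} for the Abelian case. Your ``crux claim'' plays precisely the role that Lemma~\ref{lem:dist-T} plays there, so the skeleton of the argument matches the architecture of Theorem~\ref{thrm:lump}.

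The gap, which you yourself flag, is that the crux claim is unproven, and I would add that as stated it is not even clearly the right invariance to aim for. The quantity $\Prob_{g,\sigma}(T(\tau)=\mathcal{C})$ is the law of the conjugacy type of $H\cap c^{-1}(\sigma^{-1}H\sigma)c$ for $c$ uniform in $C_{S_n}(g)$; both $H$ and $\sigma^{-1}H\sigma$ appear asymmetrically, and conjugating $g$ by a generic $\pi\in S_n$ moves it out of $H$ and destroys that relationship. So ``depends only on the $S_n$-conjugacy class of $g$'' is quite a strong assertion, and your final step (transporting along $g\mapsto\pi g\pi^{-1}$) silently relies on it. A weaker and more plausible target, which would still verify Dynkin's criterion, is that the multiset $\{\Prob_{g,\sigma}(T(\tau)=\mathcal{C}) : g\in K_\sigma\}$ depends only on the conjugacy type of $K_\sigma$; and your closing remark about possibly needing to lump by $N_{S_n}(H)$-conjugacy of $K_\sigma$, rather than $S_n$-conjugacy, is a sensible hedge that the conjecture as stated could require refinement. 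In short: a reasonable plan that correctly identifies both the right reduction and the genuine open difficulty (which is essentially Question~\ref{ques:intersects}), but not a proof.
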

\begin{conjecture}\label{conj:most-large}
    Let \(Z\) be the total number of Sylow \(p\)-double cosets in \(S_n\) and let \(Z_{0}\) be the number of double cosets of maximal size \(p^{2m}\), then if \(p,n\to \infty\)
    \[
        \frac{Z_{0}}{Z} \to 1
    \]
\end{conjecture}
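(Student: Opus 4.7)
The plan is to reduce Conjecture~\ref{conj:most-large} to a quantitative version of the statement ``most permutations lie in maximal double cosets'' and then attack that statement via a first-moment computation on $|H \cap \sigma^{-1} H \sigma|$. Let $m$ be the exponent of $p$ in $n!$, so $|H| = p^m$ and double cosets have sizes $p^m, p^{m+1}, \ldots, p^{2m}$. From the identity $n! = \sum_{a=m}^{2m} p^a f(a;n)$ and the fact that every double coset has size at least $p^m$, one gets
\[
\frac{Z - Z_0}{Z_0} \;\le\; \frac{n! - p^{2m} Z_0}{p^m Z_0} \;=\; \frac{\epsilon_n}{1 - \epsilon_n}\, p^m,
\]
where $\epsilon_n := 1 - p^{2m} Z_0 / n!$ is the fraction of permutations lying in non-maximal double cosets. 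Thus $Z_0/Z \to 1$ follows once we establish $\epsilon_n = o(p^{-m})$ uniformly along sequences with $p, n \to \infty$; this is the non-Abelian analogue of Theorem~\ref{thrm:most-large}, and recovers it in the Abelian regime.

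To bound $\epsilon_n$, I would exploit the double-counting identity
\[
\frac{1}{n!} \sum_{\sigma \in S_n} |H \cap \sigma^{-1} H \sigma| \;=\; \sum_{\lambda} \frac{|H_\lambda|^2}{|C_\lambda|},
\]
where $|H_\lambda|$ is the number of elements of $H$ of cycle type $\lambda$ and $|C_\lambda|$ is the corresponding count in $S_n$. Since $|H \cap \sigma^{-1} H \sigma|$ is a power of $p$, it is either $1$ or at least $p$, so Markov's inequality applied to $|H \cap \sigma^{-1}H\sigma| - 1 \ge 0$ gives
\[
\epsilon_n \;\le\; \frac{1}{p - 1} \sum_{\lambda \neq (1^n)} \frac{|H_\lambda|^2}{|C_\lambda|}.
\]
The contribution from the identity cycle type is exactly $1$ and cancels. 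The remaining task is to show the sum over $\lambda \ne (1^n)$ is $o(p^{-m+1})$.

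The main obstacle is controlling this cycle-type sum when $n \ge p^2$. In the Abelian case $n < p^2$, every element of $H$ has only $p$-cycles and fixed points, so $|H_\lambda|$ is read off immediately from \eqref{eq:H-form}, and the sum collapses to a binomial computation that recovers (and in fact improves upon) the bounds behind Theorem~\ref{thrm:most-large}. For $n \ge p^2$, however, $H$ is a genuine iterated wreath product $C_p \wr C_p \wr \cdots$, and its elements can have cycles of every $p$-power length, with the distribution over cycle types determined recursively by the tree structure. A natural first target is $n = p^2$, where $H \cong C_p \wr C_p$ and the cycle index of $H$ can be written down explicitly via P\'olya enumeration; if the bound can be verified there, one might try an induction on the depth of the wreath product, tracking how $|H_\lambda|$ transforms under one wreath layer. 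An alternative route would be to generalize the formula of Theorem~\ref{thrm:formula} for $f(a;n)$ by inclusion-exclusion over subgroup intersections, but that approach currently leans heavily on the Abelian structure of $H$ and would likely require a separate conceptual input --- perhaps a positive answer to Question~\ref{ques:intersects}.
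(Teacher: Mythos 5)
The statement you are trying to prove is stated in the paper as an open conjecture, not a theorem; there is no proof in the paper to compare against. Your framework (first-moment on $|H\cap\sigma^{-1}H\sigma|$ via the cycle-type sum $\Sigma := \sum_{\lambda\neq(1^n)}|H_\lambda|^2/|C_\lambda|$) is, I believe, the right object to look at, and you correctly identify the genuine obstacle: understanding the cycle types of the non-Abelian Sylow subgroup. However, your specific reduction is both unnecessary and potentially fatal.

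The step $Z-Z_0 \le (n! - p^{2m}Z_0)/p^m$ treats every non-maximal double coset as if it had minimal size $p^m$. That factor of $p^m$ is exactly what inflates your intermediate target to $\epsilon_n = o(p^{-m})$. When $n$ grows much faster than $p$ (say $n\sim e^p$, still allowed under ``$p,n\to\infty$''), $m\sim n/(p-1)$ is enormous, $p^{-m}$ is astronomically small, and the single-$p$-cycle term in $\Sigma$ alone is of order $p/n^{p-2}$, which is nowhere near $o(p^{-m+1})$. So the intermediate claim $\epsilon_n = o(p^{-m})$ can fail even in regimes where the conjecture is expected to hold; the Markov/reduction route would dead-end there. (As a smaller point, your parenthetical that ``this recovers Theorem~\ref{thrm:most-large}'' is also off: the bound $\epsilon_n \le 1/(p-2)!$ from Theorem~\ref{thrm:most-large} does \emph{not} imply $\epsilon_n = o(p^{-m})$ when $k$ is close to $p$, since $p^{p-1}/(p-2)!\to\infty$.)

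The fix is that you do not need $\epsilon_n = o(p^{-m})$ at all. Burnside's lemma applied to $H\times H$ acting on $S_n$ gives $Z = p^{-2m}\sum_{\sigma}|H\cap\sigma^{-1}H\sigma| = (n!/p^{2m})\bigl(1+\Sigma\bigr)$, while $Z_0 = p^{-2m}\cdot\#\{\sigma : H\cap\sigma^{-1}H\sigma = 1\} = (1-\epsilon_n)n!/p^{2m}$. Hence
\[
\frac{Z-Z_0}{Z_0} \;=\; \frac{\Sigma + \epsilon_n}{1-\epsilon_n},
\]
and combined with your (correct) Markov bound $\epsilon_n\le \Sigma/(p-1)$, the conjecture is \emph{equivalent} to $\Sigma\to 0$. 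So the right target is simply $\sum_{\lambda\neq(1^n)}|H_\lambda|^2/|C_\lambda|\to 0$, with no loss of a $p^m$ factor and no detour through $\epsilon_n$. This is the statement you should aim your cycle-index/wreath-product analysis at; the conceptual difficulty of controlling $|H_\lambda|$ for iterated wreath products, which you already flagged, remains the open part.
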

\begin{remark}
    \begin{enumerate}
        \item In the Abelian case, the conjugacy type of \(H \cap \sigma^{-1}H\sigma\) is exactly determined by the size of \(|H\sigma H|\). When \(n \ge p^2\), we believe that Conjecture~\ref{conj:lump} is the correct generalization of Theorem~\ref{thrm:lump}.
        \item Conjecture~\ref{conj:most-large} is similar to Theorem~1.2 in \cite{diaconis2025number} which can be equivalently stated as \(\frac{Z_0p^{2m}}{n!}\to 1\) as \(n \to \infty\). The difference between Conjecture~\ref{conj:most-large} and Theorem~1.2 in \cite{diaconis2025number} is the same as the difference between Proposition~\ref{prop:most-large} and Theorem~\ref{thrm:most-large}.
    \end{enumerate}
\end{remark}
In addition to proving Conjectures~\ref{conj:lump} and \ref{conj:most-large}, a generalization of the current paper would require a way of approximating the time it takes the Sylow--Burnside process to reach the large double cosets. In the Abelian case, this approximation was done with the monotone transition kernel \(Q\). Assuming that Conjecture~\ref{conj:lump} is true, there may exist an analogous transition matrix that satisfies a similar monotonicity property. We state a more precise version of this an open question.
\begin{question}\label{ques:monotone}
    Call a transition kernel \(Q\) on \(S_n\) \emph{monotone} if \(Q(\sigma,\tau) > 0\) implies \(|H\sigma H| \le |H\tau H|\).    Does there exist a monotone approximation to the transition kernel for the Sylow--Burnside process?
\end{question}
Answers to these questions and proofs of the conjectures seem to be the most promising path to generalizing beyond the Abelian case.

\newcommand{\etalchar}[1]{$^{#1}$}

\begin{appendix}
    \section{Proof of Proposition~\ref{prop:most-large}}\label{sec:additional-proofs}
    To prove Proposition~\ref{prop:most-large} we will prove a series of lemmas that bound \(f(a;k)\). As stated in Section~\ref{sec:sylow-background}, these proofs are based on the comments given in \cite[Remark~3.4]{diaconis2025number}. 
    
    \begin{lemma}\label{lemma:first-bound}
        For all \(a,p,k\)
        \[
            f(a;k) \le \frac{1}{p^{a}}((a-k)p)!(2k-a)!\binom{k}{2k-a}^2(p(p-1))^{2k-a}.
        \]
    \end{lemma}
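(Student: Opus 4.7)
The plan is to invoke the exact formula from Theorem~\ref{thrm:formula}, which writes $p^a f(a;k)$ as the alternating sum
\[
p^a f(a;k) = \sum_{j=2k-a}^k (-1)^{j-(2k-a)} T_j, \qquad T_j := ((k-j)p)!\, j!\, \binom{k}{j}^{2} (p(p-1))^{j} \binom{j}{2k-a}.
\]
Substituting $j = 2k-a$ shows that the right-hand side of the claimed inequality is precisely $\tfrac{1}{p^a} T_{2k-a}$, so the lemma reduces to verifying $\sum_{j=2k-a}^k (-1)^{j-(2k-a)} T_j \le T_{2k-a}$.

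Every $T_j$ is non-negative, so if the sequence $(T_j)_{j=2k-a}^{k}$ is non-increasing, the standard alternating-series estimate gives the bound immediately: regroup as
\[
\sum_{j=2k-a}^k (-1)^{j-(2k-a)} T_j = T_{2k-a} - \sum_{i \ge 0}\bigl(T_{2k-a+2i+1}-T_{2k-a+2i+2}\bigr),
\]
and each grouped difference is non-negative, so the whole expression is at most $T_{2k-a}$. Thus the entire proof reduces to proving monotonicity of $(T_j)$ on the summation range.

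To check monotonicity, one cancels the factorial and binomial ratios to obtain
\[
\frac{T_{j+1}}{T_j} = \frac{(k-j)^2\, p(p-1)}{\bigl(j+1-(2k-a)\bigr)\,\prod_{i=0}^{p-1}\bigl((k-j)p - i\bigr)}.
\]
In the summation range $j+1-(2k-a) \ge 1$. When $k-j = 1$ the product in the denominator is $p!$, so the ratio is at most $\tfrac{1}{(p-2)!}$, and for $k-j \ge 2$ the product is bounded below by $((k-j-1)p+1)^{p} \ge (p+1)^p$, which easily dominates the numerator $(k-j)^2 p(p-1) \le k^2 p^2 < p^4$ since $k < p$. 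Thus $T_{j+1}/T_j \le 1$ throughout, completing the monotonicity check.

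The main obstacle is only bookkeeping: one must handle the boundary $j=k-1$ (where the denominator is merely $p!$) and verify that the summation range conditions on $j$ and $a$ force $j+1-(2k-a) \ge 1$. Both are elementary, and the ratio bound has substantial slack, so the proof is essentially a two-line alternating-series argument once the explicit formula is in hand.
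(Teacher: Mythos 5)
Your proposal takes essentially the same route as the paper: apply Theorem~\ref{thrm:formula}, observe that the claimed bound is the leading term $T_{2k-a}$, compute the ratio $T_{j+1}/T_j$, and conclude via the alternating-series estimate once monotonicity is known. The only small blemish is the step ``$(p+1)^p \ge p^4$,'' which fails for $p=3$ (the value is $64 < 81$); the paper avoids this by cancelling $(k-j)(p-1)$ against the factor $(k-j)p-1$, giving the uniform bound $T_{j+1}/T_j \le \tfrac{1}{(j+1-(2k-a))(p-2)!} \le 1$ for all $p \ge 2$, but your case $k-j\ge 2$ with $p=3$ can also be checked directly, so the conclusion stands.
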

    \begin{proof}
        By equation \eqref{eq:numcosets} we have
        \begin{equation}\label{eq:alt-sum}
            f(a;k) = \frac{1}{p^a}\sum_{j=2k-a}^{k} (-1)^{j-(2k-a)}\Gamma_{j,a},
        \end{equation}
        where
        \[
            \Gamma_{j,a} = ((k-j)p)!j!\binom{k}{j}^2 (p(p-1))^j \binom{j}{2k-a}.
        \]
        For \(j \in [2k-a:k-1]\) we have
        \begin{align*}
            \frac{\Gamma_{j+1,a}}{\Gamma_{j,a}} & =\frac{((k-j-1)p)!(j+1)!\binom{k}{j+1}^2 (p(p-1))^{j+1} \binom{j+1}{2k-a}}{((k-j)p)!j!\binom{k}{j}^2 (p(p-1))^j \binom{j}{2k-a}} \\
                                                & =\frac{(j+1)\left(\frac{k-j}{j+1}\right)^2p(p-1)\frac{j+1}{j+1-2k-a}}{((k-j)p)((k-j)p-1)\cdots ((k-j-1)p+1)}                     \\
                                                & =\frac{(k-j)^2p(p-1)}{(j+1-2k-a)((k-j)p)((k-j)p-1)\cdots ((k-j-1)p+1)}                                                           \\
                                                & =\frac{(k-j)(p-1)}{(j+1-2k-a)((k-j)p-1)\cdots ((k-j-1)p+1)}.
        \end{align*}
        Since \(j < k\), we have that \((k-j)(p-1) \le (k-j)p -1\). It follows that
        \begin{align*}
            \frac{\Gamma_{j+1,a}}{\Gamma_{j,a}} & \le \frac{1}{(j+1-2k-a)}{((k-j)p-2)\cdots((k-j-1)p+1)} \le \frac{1}{(j+1-2k-a)(p-2)!} \le 1.
        \end{align*}
        This means that \(0 \le \Gamma_{j+1,a} \le \Gamma_{j,a}\) and so the sum in \eqref{eq:alt-sum} is an alternating sum of non-negative, non-increasing terms. This implies that
        \begin{equation}\label{eq:gamma-bound}
            f(a;k) \le \frac{1}{p^a}\Gamma_{2k-a,a} = \frac{1}{p^a}((a-k)p)!(2k-a)!\binom{k}{2k-a}^2 (p(p-1))^{2k-a}.
        \end{equation}
    \end{proof}
    The next lemma gives a bound on \(f(a;k)\) that is independent of \(a\).
    \begin{lemma}
        \label{lem:second-bound}
        For \(p \ge 11\) and \(k \le a < 2k\),
        \begin{equation}\label{eq:f-small}
            f(a;k) \le \frac{1}{p^{2k-2}}((k-1)p)!k^2(p-1).
        \end{equation}
    \end{lemma}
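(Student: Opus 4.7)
Proof plan: My strategy is to show that the Lemma~\ref{lemma:first-bound} bound, viewed as a function of \(a \in \{k, \ldots, 2k-1\}\), attains its maximum at \(a = 2k - 1\), and that this maximum already equals the right-hand side of Lemma~\ref{lem:second-bound}. Reindexing by \(j = 2k - a \in \{1, \ldots, k\}\) and setting
\[
\Phi(j) := \frac{1}{p^{2k-j}}((k-j)p)!\, j!\, \binom{k}{j}^2 (p(p-1))^j,
\]
direct substitution gives \(\Phi(1) = \frac{1}{p^{2k-2}}((k-1)p)!\, k^2 (p-1)\), which is precisely the claimed bound. Lemma~\ref{lemma:first-bound} states \(f(a;k) \le \Phi(2k - a)\), so it suffices to prove that \(\Phi\) is nonincreasing on \(\{1, 2, \ldots, k\}\).

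The heart of the argument is the ratio
\[
\frac{\Phi(j+1)}{\Phi(j)} = \frac{p(p-1)(k-j)}{(j+1)\prod_{i=1}^{p-1}((k-j)p - i)},
\]
and the task is to bound it by \(1\) for each \(1 \le j \le k-1\). This is the same computation that appeared in the proof of Lemma~\ref{lemma:first-bound}, except that now the threshold \(1\) is tighter because \(j\) is smaller. I will split on the size of \(k - j\). When \(k - j \ge 2\), every factor in the denominator product is at least \((k-j-1)p + 1 \ge p + 1\), so the product is at least \((p+1)^{p-1}\) while the numerator is at most \(p^3\) (using \(k - j \le k - 1 \le p - 2\)); for \(p \ge 11\) this gives an enormous margin. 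When \(k - j = 1\) (forcing \(j = k - 1\) and hence \(k \ge 2\)), the denominator product collapses to \((p-1)!\) and the ratio becomes \(p/(k(p-2)!) \le p/(2(p-2)!)\), which is again far below \(1\) for \(p \ge 11\).

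The main obstacle, and the reason for the \(p \ge 11\) hypothesis, is the boundary case \(k - j = 1\): that is where the ratio is largest and the slack is smallest, while for \(k - j \ge 2\) the inequality holds by a super-exponentially large factor. Once monotonicity is established, the chain \(f(a;k) \le \Phi(2k-a) \le \Phi(1)\) delivers the claimed inequality.
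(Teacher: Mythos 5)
Your proof is correct and follows essentially the same approach as the paper: both start from Lemma~\ref{lemma:first-bound}, identify the extreme value (your \(\Phi(1)\), the paper's \(\Lambda_{2k-1}\)) as the claimed right-hand side, and establish monotonicity of the bound via a ratio estimate that ultimately reduces to a factorial-versus-polynomial comparison requiring \(p\geq 11\). The only cosmetic difference is that you reindex by \(j=2k-a\) and split into the cases \(k-j\geq 2\) and \(k-j=1\), whereas the paper handles all \(a\) with a single uniform lower bound \(\frac{(p-2)!}{p^2}\geq 1\) on the ratio \(\Lambda_{a+1}/\Lambda_a\).
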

    \begin{proof}
        Let \(\Lambda_a =\frac{1}{p^a}((a-k)p)!(2k-a)!\binom{k}{2k-a}^2 (p(p-1))^{2k-a}\) and note that by Lemma~\ref{lemma:first-bound} we have that \(f(a;k) \le \Lambda_a\). Furthermore,
        \[
            \Lambda_{2k-1} = \frac{1}{p^{2k-1}}((k-1)p)!k^2p(p-1) = \frac{1}{p^{2k-2}}((k-1)p)!k^2(p-1) 
        \]
        is the right-hand-side of \eqref{eq:f-small}. Therefore, for the current lemma, it suffices to show that \((\Lambda_a)_{a=k}^{2k-1}\) is a decreasing sequence as this would imply
        \[
            f(a;k) \le \Lambda_a \le \Lambda_{2k-1}.
        \]
        Note that
        \begin{align*}
            \frac{\Lambda_{a+1}}{\Lambda_a} & =\frac{p^a((a+1-k)p)!(2k-a-1)!\binom{k}{2k-a-1}^2 (p(p-1))^{2k-a-1}}{p^{a+1}((a-k)p)!(2k-a)!\binom{k}{2k-a}^2 (p(p-1))^{2k-a}} \\
                                            & =\frac{((a+1-k)p)!\left(\frac{2k-a}{a+1-k}\right)^2}{((a-k)p)!(2k-a) p^2(p-1)}                                                 \\
                                            & =\frac{((a+1-k)p)((a+1-k)p-1)\cdots ((a-k)p+1)(2k-a)}{(a+1-k) p^2(p-1)}                                                        \\
                                            & =\frac{((a+1-k)p-1)\cdots ((a-k)p+1)(2k-a)}{(a+1-k)p(p-1)}                                                                     \\
                                            & \ge \frac{(p-1)!(2k-a)}{(a+1-k)p(p-1)}                                                                                         \\
                                            & =\frac{(p-2)!(2k-a)}{(a+1-k)p}
        \end{align*}
        We know that \(a+1-k \le k < p\) and \(2k-a \ge 1\). Thus, \(\frac{\Lambda_{a+1}}{\Lambda_a} \ge \frac{(p-2)!}{p^2}\). If \(p \ge 11\), then \((p-2)! \ge p^2\) and so \(\Lambda_{a+1}\ge \Lambda_a\) as required.
    \end{proof}
    
    The final lemma compares \(f(a;k)\) to \(f(2k;k)\).
    \begin{lemma}
        \label{lem:third-bound}
        For \(p \ge 11\) and \(k \le a < 2k\),
        \begin{equation}
            \frac{f(a;k)}{f(2k;k)} \le \frac{2p^3}{(p-1)!}
        \end{equation}
    \end{lemma}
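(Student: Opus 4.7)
The plan is to directly combine the upper bound on $f(a;k)$ from Lemma~\ref{lem:second-bound} with the lower bound on $f(2k;k)$ from Theorem~\ref{thrm:most-large}, simplify algebraically, and then use a straightforward binomial estimate to extract the stated bound. No new ideas are required beyond recognising that a crude factorial estimate is enough once the ratio is simplified.

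First, applying the two bounds gives
\[
    \frac{f(a;k)}{f(2k;k)} \;\le\; \frac{\frac{1}{p^{2k-2}}((k-1)p)!\, k^2(p-1)}{\frac{(kp)!}{p^{2k}}\bigl(1-\frac{1}{(p-2)!}\bigr)} \;=\; \frac{p^2 k^2(p-1)}{\binom{kp}{p}\, p!\,\bigl(1-\frac{1}{(p-2)!}\bigr)},
\]
where in the last step I rewrite $\frac{(kp)!}{((k-1)p)!} = p!\binom{kp}{p}$.

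Next, I would use the elementary estimate $\binom{kp}{p} \ge k^p$ (a special case of $\binom{n}{r} \ge (n/r)^r$), together with $k^p \ge k^2$ for $k \ge 1$ and $p \ge 2$, to obtain $\binom{kp}{p} \ge k^2$. Plugging this into the above display and using $p! = p \cdot (p-1)!$ yields
\[
    \frac{f(a;k)}{f(2k;k)} \;\le\; \frac{p^2(p-1)}{p!\bigl(1-\frac{1}{(p-2)!}\bigr)} \;=\; \frac{p(p-1)}{(p-1)!\bigl(1-\frac{1}{(p-2)!}\bigr)}.
\]
Finally, to show the right-hand side is at most $\frac{2p^3}{(p-1)!}$, it suffices to verify $p(p-1) \le 2p^3\bigl(1-\frac{1}{(p-2)!}\bigr)$, i.e.\ $\frac{p-1}{2p^2} \le 1-\frac{1}{(p-2)!}$. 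For $p\ge 11$ the right side exceeds $1 - 1/9! > 1/2$ while the left side is at most $1/2$, so the inequality holds comfortably.

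The only potentially delicate step is step three, where the loss $(1-1/(p-2)!)$ from Theorem~\ref{thrm:most-large} must be absorbed into the constant $2$; the hypothesis $p \ge 11$ (already present in Lemma~\ref{lem:second-bound}) is more than sufficient for this. Everything else reduces to routine cancellation, and the bound $\binom{kp}{p}\ge k^p$ is exactly tight enough to kill the $k^2$ factor uniformly in $k < p$.
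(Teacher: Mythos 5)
Your proof is correct, and it takes essentially the same route as the paper: upper-bound $f(a;k)$ by Lemma~\ref{lem:second-bound}, lower-bound $f(2k;k)$ by Theorem~\ref{thrm:most-large}, and simplify the ratio. The one genuine difference is the final estimate. The paper bounds $k^2$ crudely by $p^2$ and then argues that the product $(pk)(pk-1)\cdots((k-1)p+1)$ in the denominator is at least $p!$; you instead rewrite that product as $p!\binom{kp}{p}$ and kill the $k^2$ factor with the clean combinatorial inequality $\binom{kp}{p}\ge k^p\ge k^2$, which is tighter and more transparent. Your version is also arithmetically more robust: the paper's displayed chain appears to drop a factor of $p$ in passing from $p^{2k}/p^{2k-2}$ to ``$p$'' (and has an index typo in the product), which it then silently compensates for in a later step; your bookkeeping, which keeps the explicit factor $1-\frac{1}{(p-2)!}$ until the very end, avoids that issue and establishes the stated bound cleanly, in fact with room to spare.
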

    \begin{proof}
        By Lemma~\ref{lem:second-bound} we know that
        \[
            f(a;k) \le \frac{1}{p^{2k-2}}((k-1)p)!k^2(p-1)
        \]
        Furthermore, from Theorem~\ref{thrm:most-large} we have
        \[
            f(2k;k) \ge \left(1-\frac{1}{(p-2)!}\right)\frac{(pk)!}{p^{2k}} \ge \frac{(pk)!}{2p^{2k}}.
        \]
        It follows that
        \begin{align*}
            \frac{f(a;k)}{f(2k;k)} & \le \frac{2((k-1)p)!k^2(p-1)p^{2k}}{p^{2k-2}(pk)!} \\
                                   & \le \frac{2((k-1)p)!pk^2(p-1)}{(pk)!}              \\
                                   & = \frac{2pk^2(p-1)}{(pk)(pk-1)\cdots (p(k-1)-1)}   \\
                                   & \le \frac{2p^4}{(pk)(pk-1)\cdots (p(k-1)-1)}       \\
                                   & \le \frac{2 p^4}{p!}                               \\
                                   & =\frac{2p^3}{(p-1)!}.
        \end{align*}
    \end{proof}
    Finally, we can prove Proposition~\ref{prop:most-large}.
    \begin{proof}[Proof of Proposition~\ref{prop:most-large}]
        By Lemma~\ref{lem:third-bound} if \(Z=\sum_{a=k}^{2k}f(a;k)\), then
        \begin{align*}
            f(2k;k) & =Z-\sum_{a=k}^{2k-1}f(a;k)            \\
                    & \ge Z-(k-1)\frac{2p^3}{(p-1)!}f(2k;k) \\
                    & \ge Z-\frac{2p^4}{(p-1)!}Z, 
        \end{align*}
        And so
        \[
            1-\frac{2p^4}{(p-1)!} \le \frac{f(2k;k)}{Z} \le 1,
        \]
        as required.
    \end{proof}
    \section{Derivation of total variation distance when \(k=1\)}\label{appn:exact}
    In the section we prove \eqref{eq:exact} by proving a series of lemmas. We first prove the formula for \(P\) given in Section~\ref{sec:examples}, then we compute an eigenvalue decomposition for \(P\). This eigenvalue decomposition is used to give an exact formula for \(P^t(\sigma,\tau)\) which is then used to compute the total variation distance as in \eqref{eq:exact}.
    \begin{lemma}
        \label{lem:P_k_eq_1}
        Suppose that \(k=1\), then the transition kernel for the Sylow--Burnside process is given by 
        \[
            P(\sigma,\tau) = \begin{cases}
                \frac{1}{p!}                  & \text{if } |H\sigma H| = p^2,                                   \\
                \frac{1}{pp!}                 & \text{if } |H\sigma H| = p \text{ and } H\sigma H \neq H\tau H, \\
                \frac{p-1}{p^2}+\frac{1}{pp!} & \text{if } |H\sigma H|=p \text{ and } H\sigma H = H\tau H.
            \end{cases}
        \]
    \end{lemma}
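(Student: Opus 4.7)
The plan is to apply Algorithms~\ref{alg:stablizer} and \ref{alg:fixed-points} directly in the special case $k=1$. Here $H=\langle\eta_1\rangle$ is cyclic of prime order $p$ generated by the $p$-cycle $\eta_1=(1,2,\ldots,p)$, and the only possible double-coset sizes are $p$ and $p^2$.

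First I would dichotomize based on Algorithm~\ref{alg:stablizer}. The set $A=\{j:\sigma\eta_j\sigma^{-1}\in H\}$ is either empty or equal to $\{1\}$. If $A=\emptyset$, then $H\cap\sigma^{-1}H\sigma=\{\id\}$, so $|H\sigma H|=p^2$; this is Case~1 of the lemma. If $A=\{1\}$, then $\sigma\in N_{S_p}(H)$, so $H\sigma H=\sigma H$ has size $p$; in particular $H\sigma H=H\tau H$ if and only if $\tau\in\sigma H$.

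In Case~1, Algorithm~\ref{alg:stablizer} returns $(h,g)=(\id,\id)$ deterministically, and Algorithm~\ref{alg:fixed-points} then samples $\tau$ uniformly from $S_p$; this gives $P(\sigma,\tau)=1/p!$ as claimed.

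In Case~2, Algorithm~\ref{alg:stablizer} returns $g=\eta_1^{i_1}$ with $i_1$ uniform on $[0:p-1]$ and $h=\sigma g\sigma^{-1}\in H$. I would split on $i_1$. With probability $1/p$ we have $i_1=0$ and $(h,g)=(\id,\id)$, so $\tau$ is uniform on $S_p$; this contributes $\frac{1}{p}\cdot\frac{1}{p!}=\frac{1}{p\,p!}$ to $P(\sigma,\tau)$ for every $\tau$. With probability $(p-1)/p$, $g$ is a $p$-cycle generating $H$; the standard fact that a $p$-cycle in $S_p$ has cyclic centralizer of order $p$ gives $C_{S_p}(g)=\langle g\rangle=H$, and combining this with the equivalence \eqref{eq:equiv} yields $S_p^{h,g}=\sigma C_{S_p}(g)=\sigma H$, a set of size $p$. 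Hence $\tau$ is uniform on $\sigma H$; each nonzero value of $i_1$ contributes $\frac{1}{p}\cdot\frac{1}{p}=\frac{1}{p^2}$ per $\tau\in\sigma H$, for a total of $(p-1)/p^2$ over the $p-1$ such values. Summing the two sub-cases yields the two sub-formulae of Case~2.

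There is essentially no serious obstacle: the lemma is a careful unpacking of the Burnside transition kernel via the sampling algorithms already set up in Section~\ref{sec:running}. The only step that needs more than a sentence is the identification $S_p^{h,g}=\sigma H$ when $g\neq\id$, which rests on the orbit--stabilizer computation of the centralizer of a $p$-cycle in $S_p$.
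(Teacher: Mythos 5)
Your proposal is correct and takes essentially the same approach as the paper: both unpack the two-step Burnside transition kernel via the stabilizer $(H\times H)_\sigma$ and the fixed-point sets $S_p^{h,g}$, splitting into the same three cases. The only cosmetic difference is that you re-derive $|S_p^{h,g}|=p$ for a $p$-cycle $g$ via the centralizer $C_{S_p}(g)=\langle g\rangle$, whereas the paper just reads this off from the already-established formula \eqref{eq:size-commutator}.
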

    \begin{proof}
        The matrix \(P\) is given by
        \[
            P(\sigma,\tau) = \frac{1}{(H \times H)_\sigma}\sum_{h,g \in (H \times H)_\sigma \cap (H \times H)_\tau} \frac{1}{|S_p^{h,g}|}
        \]
        If \(|H\sigma H|=p^2\), then \((H \times H)_\sigma\) only contains the identity and thus
        \[
            P(\sigma,\tau) = \frac{1}{1}\times \frac{1}{|S_p^{\id,\id}|}=\frac{1}{p!}.
        \]
        If \(|H\sigma H|=p\) and \(H\sigma H \neq H\tau H\), then \((H \times H)_\sigma \cap (H \times H)_\tau\) again only contains the identity and so
        \[
            P(\sigma,\tau) = \frac{1}{|(H \times H)_\sigma}\times \frac{1}{|S_p^{\id,\id}|}=\frac{1}{pp!}.
        \]
        Finally, if \(H \times H = H\tau H\) and \(|H\sigma H|=p\), then \(|(H \times H)_\sigma \cap (H \times H)_\tau|=p\) and
        \begin{align*}
            P(\sigma,\tau) & = \frac{1}{|(H \times H)_\sigma}\left(\frac{1}{|S_p^{\id,\id}|}+\sum_{(h,g) \in (H \times H)_\sigma \setminus \{(\id,\id)\}}\frac{1}{|S_p^{h,g}|}\right) \\
                           & =\frac{1}{p}\left(\frac{1}{p!}+(p-1)\frac{1}{p}\right)                                                                                                   \\
                           & =\frac{p-1}{p^2}+\frac{1}{pp!}.
        \end{align*}
    \end{proof}
    The following notation is useful for describing the eigenvectors of \(P\). Let \(A \subseteq S_p\) be the set of permutations in double cosets of size \(p\) and let \(B = S_p \setminus A\). We will fix \(\rho_1,\ldots,\rho_{n_1}\) such that 
    \[
        A =\bigsqcup_{j=1}^{n_1} H\rho_j H.
    \]
    Finally, for a subset \(C \subseteq S_p\), let \(\ones_C\) be the vector that is \(1\) on \(C\) and \(0\) otherwise.
    \begin{lemma}\label{lem:eigendecomp}
        Let \(P\) be the transition kernel for the Sylow--Burnside process when \(k=1\). Then \(P\) his diagonalizable and the eigenvalues, eigenvectors and their multiplicities are given by Table~\ref{table:eigens}.
    \end{lemma}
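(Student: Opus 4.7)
The plan is to exhibit a direct-sum decomposition $\reals^{S_p} = V_0 \oplus V_1 \oplus V_2 \oplus V_3$ into $P$-invariant subspaces and identify the eigenvalue on each piece directly. The starting point is to rewrite Lemma~\ref{lem:P_k_eq_1} in its natural averaging form: for any $v\colon S_p\to\reals$,
\[
    (Pv)(\sigma) = \begin{cases} \operatorname{avg}_{S_p}(v), & \sigma\in B,\\ \tfrac{p-1}{p}\operatorname{avg}_{H\rho_j H}(v) + \tfrac{1}{p}\operatorname{avg}_{S_p}(v), & \sigma\in H\rho_j H\subseteq A. \end{cases}
\]
Thus the image $Pv$ depends only on the overall mean of $v$ and on the $n_1$ orbit means over the small double cosets, which is what makes all the eigenspaces easy to read off.

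From this formula, three of the four eigenspaces drop out immediately. First, $V_0:=\operatorname{span}(\ones_{S_p})$ is a line of eigenvectors with eigenvalue $1$. Second, let $V_1$ denote the $(n_1-1)$-dimensional space of vectors of the form $\sum_j c_j\ones_{H\rho_j H}$ with $\sum_j c_j=0$; the formula above gives $Pv=\tfrac{p-1}{p}v$ on $V_1$, producing eigenvalue $1-1/p$ with multiplicity $p-2$. Third, let $V_3$ be the space of vectors whose sum over each $H\rho_j H$ vanishes and whose sum over $B$ vanishes; these are $n_1+1=p$ linearly independent constraints, giving $\dim V_3=p!-p$, and the formula then yields $Pv\equiv 0$, producing eigenvalue $0$ with multiplicity $p!-p$.

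The remaining one-dimensional eigenspace $V_2$ lives inside the $P$-invariant subspace $\operatorname{span}(\ones_A,\ones_B)$, which contains $V_0$. I would compute the $2\times 2$ block of $P$ in the ordered basis $(\ones_A,\ones_B)$ explicitly: setting $\alpha:=n_1/p!$ and $\beta:=n_2/(p-1)!$ (so that $\alpha+\beta=1/p$ using $n_2=((p-1)!-(p-1))/p$), the block becomes
\[
    M = \begin{pmatrix} \tfrac{p-1}{p}+\alpha & \beta \\ p\alpha & p\beta \end{pmatrix}.
\]
Since $(1,1)^\top$ is a fixed vector of $M$, the other eigenvalue equals $\operatorname{tr}(M)-1=(p-1)\beta=n_2/(p-2)!$. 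Substituting the closed form of $n_2$ gives $(p-1)\beta = 1-\tfrac{1}{p}-\tfrac{p-1}{p(p-2)!}$, which is the claimed value; the corresponding eigenvector is $\beta\ones_A-p\alpha\ones_B$ up to scale and spans $V_2$.

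Finally, since the four eigenvalues are distinct (for $p\ge 3$), eigenvectors belonging to distinct $V_i$ are automatically linearly independent, so $V_0+V_1+V_2+V_3$ is a direct sum. The dimensions add to $1+(p-2)+1+(p!-p)=p!$, so the four subspaces span $\reals^{S_p}$. This shows that $P$ is diagonalizable with spectrum exactly as recorded in Table~\ref{table:eigens}. The only nontrivial step is the simplification of $(p-1)\beta$ into the stated form; that is where I expect the bulk of the bookkeeping to live, but it is purely algebraic and amounts to plugging in the formula for $n_2$.
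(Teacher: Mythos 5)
Your proof is correct and reaches the same decomposition as Table~\ref{table:eigens}, but it is organized more systematically than the paper's. The paper \emph{guesses} each eigenvector and then confirms $P\psi=\lambda\psi$ by explicit row sums, doing the case $|H\sigma H|=p$ and $|H\sigma H|=p^2$ separately for the $V_2$ eigenvector. You instead recast $P$ in its averaging form, observe that the image of $P$ lies in the $p$-dimensional space of functions constant on $B$ and on each small double coset, and this single observation hands you $V_0$, $V_1$, $V_3$, and the $P$-invariant two-plane $\operatorname{span}(\ones_A,\ones_B)$ all at once. Reading the remaining eigenvalue off the $2\times 2$ block via the trace (using $\alpha+\beta=1/p$) then replaces the paper's unmotivated verification with a computation that actually explains where the odd-looking value $1-\tfrac1p-\tfrac{p-1}{p(p-2)!}=(p-1)\beta$ comes from, and produces the eigenvector $\beta\ones_A-p\alpha\ones_B \propto \tfrac1{n_1}\ones_A-\tfrac1{n_2}\ones_B$ rather than requiring it as a starting guess. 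That is a real improvement in clarity, even though the underlying content is the same.

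One gap: the lemma, via Table~\ref{table:eigens}, also asserts the \emph{left} eigenspaces, and these are what Lemma~\ref{lem:powers-of-P} actually uses to expand $\ones_\sigma P^t$; your argument only establishes the right eigenspaces. This is easy to close: either run the parallel computation for $vP$, or note that since $P$ is reversible with respect to $\pi$ (Theorem~2.1 of~\cite{jerrum1993uniform}), each left eigenvector is the entrywise product of $\pi$ with the corresponding right eigenvector. Because $\pi$ takes the constant value $1/(Zp)$ on $A$ and $1/(Zp^2)$ on $B$, this multiplication sends $\ones_{S_p}\mapsto\pi$, fixes $V_1$ and $V_3$ as sets, and sends $\tfrac1{n_1}\ones_A-\tfrac1{n_2}\ones_B$ to a multiple of $\tfrac1{pn_1}\ones_A-\tfrac1{p^2n_2}\ones_B$, recovering the last column of the table. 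You should say this explicitly, since without it the lemma as stated is not fully proved.
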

    \begin{table}[h]
        \begin{center}
            \begin{tabular}{c|c|p{5.5cm}|p{5cm}}
                Eigenvalues                             & Multiplicity   & Right Eigenspace                                                                             & Left Eigenspace                                                \\
                \hline 
                1                                       & 1              & Spanned by the constant function \(\ones_{S_p}\).                                            & Spanned by the stationary distribution \(\pi\)                 \\
                \(1-\frac{1}{p}\)                       & \(n_1-1\)      & Functions of the form \(\sum c_j \ones_{H \rho_j H}\) with \(\sum c_j = 0\).                 & Same as the right eigenspace.                                  \\
                \(1-\frac{1}{p} - \frac{p-1}{p(p-2)!}\) & 1              & Spanned by \(\frac{1}{n_1}\ones_A -\frac{1}{n_2}\ones_B\)                                    & Spanned by \(\frac{1}{pn_1}\ones_A - \frac{1}{p^2n_2}\ones_B\) \\
                0                                       & \(p! - n_1-1\) & Functions that are orthogonal \(\ones_B\) and \(\ones_{H \rho_j H}\) for \(j=1,\ldots,n_1\). & Same as the right eigenspace.
            \end{tabular}
        \end{center}
        \caption{The eigenvalues and eigenspaces of \(P\).}
        \label{table:eigens}
    \end{table}
    \begin{proof}
        All transition kernels have \(1\) as an eigenvalue, the constant function as a right eigenvector and the stationary distribution as a left eigenvector. Now suppose that \(\psi = \sum_{j=1}^{n_1}c_j\ones_{H\rho_j H}\) with \(\sum_{j=1}^{n_1}c_j=0\). From Lemma~\ref{lem:P_k_eq_1} we can see that both of the maps \(\sigma \mapsto P(\sigma,\tau)\) and \(\tau \mapsto P(\sigma,\tau)\) are constant on double cosets. Thus,
        \begin{align*}
            (P\psi)(\sigma) & =\sum_{j=1}^{n_1}\sum_{\tau \in H \rho_j H}P(\sigma,\tau)c_j              \\
                            & =\sum_{j=1}^{n_1}pP(\sigma,\rho_j)c_j                                     \\
                            & =\sum_{j=1}^{n_1}\frac{1}{p!}c_j + I[H\sigma H=H\rho_j H]\frac{p-1}{p}c_j \\
                            & =\left(1-\frac{1}{p}\right)\psi(\sigma).
        \end{align*}
        A similar argument also shows that \((\psi P)(\tau)=\left(1-\frac{1}{p}\right)\psi(\tau)\). Next let \(\psi = \frac{1}{n_1}\ones_A -\frac{1}{n_2}\ones_B\). Then if \(|H\sigma H|=p\)
        \begin{align*}
            (P\psi)(\sigma) & =\sum_{\tau \in A}\frac{1}{n_1}P(\sigma,\tau)-\sum_{\tau \in B}\frac{1}{n_2}P(\sigma,\tau)     \\
                            & =\frac{1}{n_1}\left(\frac{pn_1}{pp!}+\frac{p(p-1)}{p^2}\right)-\frac{1}{n_2}\frac{n_2p^2}{pp!} \\
                            & = \left(\frac{n_1}{p!}+\frac{p-1}{p}-\frac{n_1p}{p!}\right)\frac{1}{n_1}                       \\
                            & =\left(\frac{p-1}{p!}+\frac{p-1}{p}-\frac{(p-1)p}{p!}\right)\psi(\sigma)                       \\
                            & =\left(1-\frac{1}{p}-\frac{p-1}{p(p-2)!}\right)\psi(\sigma).
        \end{align*}
        Where the second last line holds because \(n_1=p-1\) and \(\psi(\sigma)=\frac{1}{n_1}\). If \(\sigma \in B\), then \(\psi(\sigma)=-\frac{1}{n_2}\). Recall also that \(n_2=\frac{(p-1)!-(p-1)}{p}\), then
        \begin{align*}
            (P\psi)(\sigma) & =\sum_{\tau \in A}\frac{1}{n_1}P(\sigma,\tau)-\sum_{\tau \in B}\frac{1}{n_2}P(\sigma,\tau) \\
                            & =\frac{pn_1}{n_1}\frac{1}{p!}-\frac{n_2p^2}{n_2}\frac{1}{p!}                               \\
                            & =\left(\frac{n_2}{(p-1)!}-\frac{n_2p}{(p-1)!}\right)\frac{1}{n_2}                          \\
                            & =\left(-\frac{(p-1)!-(p-1)}{p!}+\frac{(p-1)!-(p-1)}{(p-1)!}\right)\psi(\sigma)             \\
                            & =\left(\frac{-(p-1)!+(p-1)+p!-p(p-1)}{p!}\right)\psi(\sigma)                               \\
                            & =\left(1-\frac{1}{p}-\frac{p-1}{p(p-2)!}\right)\psi(\sigma).
        \end{align*}
        And so \(\psi\) is a right eigenvector with eigenvalue \(1-\frac{1}{p}-\frac{p-1}{p(p-2)!}\). A similar argument shows that \(\tilde{\psi}=\frac{1}{pn_1}\ones_A-\frac{1}{p^2n_2}\ones_B\) is a left eigenvector with the same eigenvalue. Finally, the functions \(\sigma \mapsto P(\sigma,\tau)\) and \(\tau \mapsto P(\sigma,\tau)\) are constant on \(B\) and each of the double cosets \(H\rho_jH\). It follows that if \(\psi\) is orthogonal to \(\ones_B\) and \(\ones_{H \rho_j H}\) for all \(j\), then \(P\psi = \psi P = 0\). There are \(p!-n_1-1\) such vectors, and so we have found an eigenvalue decomposition of \(P\).
    \end{proof}
    \begin{lemma}\label{lem:powers-of-P}
        Let \(P\) be the transition kernel for the Sylow--Burnside process when \(k=1\) and let \(\tilde\psi = \frac{1}{pn_1}\ones_A-\frac{1}{p^2n_2}\ones_B\). If \(|H\sigma H|=p\), then
        \begin{align}\label{eq:P-power-small}
            P^t_\sigma & =   \left(1-\frac{1}{p}\right)^t\left(\frac{1}{p}\ones_{H \sigma H}-\frac{1}{pn_1}\ones_A\right)            +\frac{n_2}{n_1+n_2}\left(1-\frac{1}{p}-\frac{p-1}{p(p-2)!}\right)^t\tilde\psi+\pi,
        \end{align}
        and if \(|H\sigma H|=p^2\), then
        \begin{equation}\label{eq:P-power-large}
            P^t_\sigma = -\frac{n_1}{n_1+n_2}\left(1-\frac{1}{p}-\frac{p-1}{p(p-2)!}\right)^t\tilde\psi+\pi.
        \end{equation}
    \end{lemma}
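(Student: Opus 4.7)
My approach is to view $P^t_\sigma$ as a row vector on $S_p$ and exploit the eigenvalue decomposition of $P$ from Lemma~\ref{lem:eigendecomp}. The RHS of \eqref{eq:P-power-small}--\eqref{eq:P-power-large} is, by construction, a linear combination of left eigenvectors of $P$ whose scalar coefficients take the form $c_i \lambda_i^t$, where $\lambda_i \in \{1, \lambda_1, \lambda_2\}$ with $\lambda_1 := 1-\tfrac{1}{p}$ and $\lambda_2 := 1-\tfrac{1}{p}-\tfrac{p-1}{p(p-2)!}$. If each summand really is a left eigenvector with the advertised eigenvalue, then right-multiplication by $P$ sends the RHS at time $t$ to the RHS at time $t+1$, so it will suffice to verify the identity at $t=1$ and finish by induction.

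First I will check step by step that every term on the RHS is an appropriate left eigenvector. The term $\pi$ has eigenvalue $1$ and $\tilde\psi$ has eigenvalue $\lambda_2$ directly from Lemma~\ref{lem:eigendecomp}. For $\sigma \in A$ with $\sigma \in H \rho_{j_0} H$, the vector $v_\sigma := \tfrac{1}{p}\ones_{H\sigma H} - \tfrac{1}{p n_1}\ones_A$ expands as $\sum_{j=1}^{n_1} c_j \ones_{H \rho_j H}$ with $c_{j_0} = \tfrac{n_1 - 1}{p n_1}$ and $c_j = -\tfrac{1}{p n_1}$ for $j \neq j_0$; since these coefficients sum to zero, Lemma~\ref{lem:eigendecomp} identifies $v_\sigma$ as a left $\lambda_1$-eigenvector. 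For $|H\sigma H|=p^2$ there is no $\lambda_1$-term in the formula, consistent with $v_\sigma$ being supported on $A$.

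For the $t=1$ verification I will use Lemma~\ref{lem:P_k_eq_1}. Both sides are constant on each of the blocks $\{H\sigma H,\, A\setminus H\sigma H,\, B\}$ when $|H\sigma H|=p$, and constant on all of $S_p$ when $|H\sigma H|=p^2$, so matching values reduces to three numerical equalities in the first case and one in the second. Each reduces to routine algebra after invoking the partition identity $n_1 p + n_2 p^2 = |S_p| = p!$ (equivalently, $n_1 + n_2 p = (p-1)!$). For example, in the $|H\sigma H|=p^2$ case $P_\sigma$ is uniform on $S_p$, and the equality $\tfrac{1}{p!} = \pi(\tau) - \tfrac{n_1}{n_1+n_2}\lambda_2 \,\tilde\psi(\tau)$ for $\tau\in A$ and $\tau\in B$ collapses to this identity after clearing denominators.

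The only nontrivial labor is the bookkeeping in the $t=1$ check; conceptually nothing is hard, but one must keep track of which block $\tau$ lies in and carefully combine the three scalar contributions. A useful simplification is to rewrite $1-\lambda_2 = \tfrac{(p-2)! + p - 1}{p(p-2)!}$ before combining terms, after which the matching on $\tau \in B$ collapses to $(p-1)\bigl((p-2)! + p - 1\bigr) = p(n_1+n_2)$, and the two $\tau \in A$ identities follow by the same reduction.
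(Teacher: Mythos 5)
Your argument is correct but takes a genuinely different route from the paper's proof. The paper proceeds top-down: it writes the starting row vector $\ones_\sigma$ explicitly as a linear combination of left eigenvectors of $P$ from Lemma~\ref{lem:eigendecomp} --- including a term in the $0$-eigenspace that is killed after a single application of $P$ --- so that $P^t_\sigma = \ones_\sigma P^t$ yields the formula immediately for $t \ge 1$. You proceed bottom-up: you observe that the proposed right-hand side is itself assembled from left eigenvectors of $P$ with the advertised eigenvalues, so right-multiplying the candidate expression by $P$ increments $t$, and you anchor the resulting induction by verifying the identity coordinatewise at $t=1$ via Lemma~\ref{lem:P_k_eq_1}. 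Both routes rely on the same spectral data, but they trade off differently: the paper avoids any numerical verification at the cost of spotting the right decomposition of $\ones_\sigma$, while you avoid that decomposition at the cost of the $t=1$ arithmetic, which you correctly reduce to the counting identity $n_1 p + n_2 p^2 = p!$ (equivalently $p(n_1+n_2) = (p-1)\bigl((p-2)! + p - 1\bigr)$). One detail you handle correctly that deserves emphasis: the induction must start at $t=1$, not $t=0$, because the stated formulas fail at $t=0$ --- $\ones_\sigma$ carries a nonzero component in the $0$-eigenspace that your expressions omit, and this is exactly the piece that the paper's decomposition names explicitly and then discards after one step.
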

    \begin{proof}
        Let \(\ones_\sigma = \ones_{\{\sigma\}}\) and note that \(P^t_\sigma =\ones_\sigma P^t\). Our goal is thus to write \(\ones_\sigma\) in terms of the left eigenvectors of \(P\) and then apply Lemma~\ref{lem:eigendecomp}. Suppose that \(|H\sigma H|=p\), then
        \begin{align*}
            \ones_\sigma & =\left(\ones_\sigma - \frac{1}{p}\ones_{H \sigma H}\right) + \frac{1}{p}\left(\ones_{H \sigma H}-\frac{1}{n_1}\ones_A\right)+\frac{1}{pn_1}\ones_A.
        \end{align*}
        And
        \begin{align*}
            \frac{1}{pn_1}\ones_A & =\left(\frac{n_2}{n_1+n_2}\frac{1}{pn_1}\ones_A+\frac{1}{p(n_1+n_2)}\ones_A\right)+\left(-\frac{n_2}{n_1+n_2}\frac{1}{p^2n_2}\ones_B +\frac{1}{p^2(n_1+n_2)}\ones_B\right) \\
                                  & =\frac{n_2}{n_1+n_2}\left(\frac{1}{pn_1}\ones_A-\frac{1}{p^2n_2}\ones_B \right)+\left(\frac{1}{pn_1}\ones_A+\frac{1}{p^2n_2}\ones_B\right).
        \end{align*}
        Recall that \(\tilde{\psi}=\frac{1}{pn_1}\ones_A-\frac{1}{p^2n_2}\ones_B\) and \(\frac{1}{pn_1}\ones_A+\frac{1}{p^2n_2}\ones_B=\pi\). We thus have
        \[
            \ones_\sigma = \left(\ones_\sigma - \frac{1}{p}\ones_{H \sigma H}\right) + \frac{1}{p}\left(\ones_{H \sigma H}-\frac{1}{n_1}\ones_A\right)+\frac{n_2}{n_1+n_2}\tilde{\psi}+\pi.
        \]
        The vector \(\ones_\sigma - \frac{1}{p}\ones_{H \sigma H}\) is a left eigenvector of \(P\) with eigenvalue 0 and \(\ones_{H \sigma H}-\frac{1}{n_1}\ones_A\) is a left eigenvector of \(P\) with eigenvalue \(1-\frac{1}{p}\). Finally, \(\tilde{\psi}\) and \(\pi\) are left eigenvectors with eigenvalues \(1-\frac{1}{p}-\frac{p-1}{p(p-2)!}\) and 1 respectively. It follows that
        \begin{align*}
            P^t_\sigma & =\ones_\sigma P^t                                                                                                                                                                   \\
                       & =\left(\left(\ones_\sigma - \frac{1}{p}\ones_{H \sigma H}\right) + \frac{1}{p}\left(\ones_{H \sigma H}-\frac{1}{n_1}\ones_A\right)+\frac{n_2}{n_1+n_2}\tilde{\psi}+\pi\right)P^t    \\
                       & =\left(1-\frac{1}{p}\right)^t\left(\frac{1}{p}\ones_{H \sigma H}-\frac{1}{pn_1}\ones_A\right)+\frac{n_2}{n_1+n_2}\left(1-\frac{1}{p}-\frac{p-1}{p(p-2)!}\right)^t\tilde{\psi}+\pi.
        \end{align*}
        Since \(\tilde{\psi}=\frac{1}{pn_1}\ones_A-\frac{1}{p^2n_2}\ones_B\). We have shown \eqref{eq:P-power-small}. Now suppose that \(|H\sigma H|=p^2\), then a similar calculation gives
        \[
            \ones_\sigma = \left(\ones_\sigma - \frac{1}{n_2p}\ones_B\right)-\frac{n_1}{(n_1+n_2)}\tilde{\psi}+\pi.
        \]
        The vector \(\ones_\sigma - \frac{1}{n_2p}\) is a left eigenvector of \(P\) with eigenvalue \(0\) and hence
        \begin{align*}
            P^t_\sigma & =\ones_\sigma P^t                                                                                        \\
                       & =\left(\left(\ones_\sigma - \frac{1}{n_2p}\ones_B\right)-\frac{n_1}{(n_1+n_2)}\tilde{\psi}+\pi\right)P^t \\
                       & =-\frac{n_1}{n_1+n_2}\left(1-\frac{1}{p}-\frac{p-1}{p(p-2)!}\right)^t\tilde{\psi}+\pi.
        \end{align*}
        Since \(-\tilde{\psi}=\frac{1}{p^2n_2}\ones_B-\frac{1}{pn_1}\ones_A\), we have also shown \eqref{eq:P-power-large}.
    \end{proof}
    \begin{lemma}
        Let \(P\) be the transition kernel for the Sylow--Burnside process when \(k=1\). Let \(\pi\) be the stationary distribution for \(P\). Then for all \(t \ge 1\)
        \[
            \Vert P^t_\sigma - \pi \Vert_{\mathrm{TV}} =\begin{cases}
                \left(1-\frac{1}{n_1}\right)\left(1-\frac{1}{p}\right)^t + \frac{n_2}{n_1(n_1+n_2)}\left(1-\frac{1}{p}-\frac{p-1}{p(p-2)!}\right)^t & \text{if } |H\sigma H| = p. \\
                \frac{n_1}{n_1+n_2}\left(1-\frac{1}{p}-\frac{p-1}{p(p-2)!}\right)^t                                                                 & \text{if }|H\sigma H|=p^2.\end{cases}
        \]
    \end{lemma}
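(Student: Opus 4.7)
The plan is to read off $P^t_\sigma-\pi$ directly from Lemma~\ref{lem:powers-of-P} and then compute its $L^1$-norm by splitting $S_p$ into the three disjoint pieces $H\sigma H$, $A\setminus H\sigma H$, and $B$ on which every vector appearing is constant. Recall the convention $\Vert \mu-\nu\Vert_{\mathrm{TV}}=\tfrac12\Vert\mu-\nu\Vert_1$ and note the cardinalities $|A|=pn_1$, $|B|=p^2n_2$, $|H\sigma H|=p$.

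For the case $|H\sigma H|=p^2$, Lemma~\ref{lem:powers-of-P} gives
\[
P^t_\sigma-\pi = -\frac{n_1}{n_1+n_2}\left(1-\frac{1}{p}-\frac{p-1}{p(p-2)!}\right)^t\tilde\psi,
\]
and since $\tilde\psi=\frac{1}{pn_1}\ones_A-\frac{1}{p^2n_2}\ones_B$ satisfies $\Vert\tilde\psi\Vert_1 = pn_1\cdot\frac{1}{pn_1}+p^2n_2\cdot\frac{1}{p^2n_2}=2$, halving the $L^1$-norm immediately gives the claimed formula (I would also note in passing that $1-\frac{1}{p}-\frac{p-1}{p(p-2)!}\ge 0$ for $p\ge 3$, so there is no issue raising this number to the power $t$).

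For the case $|H\sigma H|=p$, let $\alpha=\left(1-\frac{1}{p}\right)^t$ and $\beta=\frac{n_2}{n_1+n_2}\left(1-\frac{1}{p}-\frac{p-1}{p(p-2)!}\right)^t$, so that
\[
P^t_\sigma-\pi = \alpha\left(\frac{1}{p}\ones_{H\sigma H}-\frac{1}{pn_1}\ones_A\right)+\beta\,\tilde\psi.
\]
Evaluating on the three pieces yields the values $\frac{\alpha}{p}+\frac{\beta-\alpha}{pn_1}$ on $H\sigma H$, $\frac{\beta-\alpha}{pn_1}$ on $A\setminus H\sigma H$, and $-\frac{\beta}{p^2n_2}$ on $B$. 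The key sign observation is that $0\le\beta<\alpha$ (because both base eigenvalues are in $[0,1]$ and $\frac{n_2}{n_1+n_2}<1$), which forces the value on $H\sigma H$ to be positive and the values on $A\setminus H\sigma H$ and $B$ to be non-positive. Computing the total variation as the sum of positive parts then gives
\[
\Vert P^t_\sigma-\pi\Vert_{\mathrm{TV}} = p\left(\frac{\alpha}{p}+\frac{\beta-\alpha}{pn_1}\right)=\left(1-\frac{1}{n_1}\right)\alpha+\frac{\beta}{n_1},
\]
which rearranges to the stated formula after substituting back the values of $\alpha$ and $\beta$.

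The only place where anything subtle happens is the sign check for $\beta<\alpha$ and the verification that $\alpha/p + (\beta-\alpha)/(pn_1)>0$; both are elementary but worth writing out carefully so that the sum-of-positive-parts computation is justified. Everything else is bookkeeping with the explicit formulas from Lemma~\ref{lem:powers-of-P} and the cardinalities of $A$, $B$, and $H\sigma H$.
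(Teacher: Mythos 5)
Your proposal is correct and follows essentially the same route as the paper: read off $P^t_\sigma - \pi$ from Lemma~\ref{lem:powers-of-P}, observe that only $H\sigma H$ (resp.\ $B$) carries a positive sign in the small (resp.\ large) double-coset case, and sum the positive parts. Your explicit sign check ($\beta < \alpha$ and the positivity of the value on $H\sigma H$) is a welcome bit of extra rigor that the paper leaves implicit, but the underlying argument is the same.
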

    \begin{proof}
        Suppose \(|H\sigma H|=p\). Then, Lemma~\ref{lem:powers-of-P} shows that \(P^t_\sigma(\tau) > \pi(\tau)\) if and only if \(\tau \in H\sigma H\) we have
        \begin{align*}
            P^t_\sigma(\tau)-\pi(\tau) & =\left(1-\frac{1}{p}\right)^t\left(1-\frac{1}{n_1}\right)\frac{1}{p}+\frac{n_2}{pn_1(n_1+n_2)}\left(1-\frac{1}{p}-\frac{p-1}{p(p-2)!}\right)^t.
        \end{align*}
        Since \(|H\sigma H|=p\) it follows that
        \begin{align*}
            \Vert P^t_\sigma-\pi\Vert_{\mathrm{TV}} & =\sum_{\tau \in H\sigma H}\left(1-\frac{1}{p}\right)^t\left(1-\frac{1}{n_1}\right)\frac{1}{p}+\frac{n_2}{pn_1(n_1+n_2)}\left(1-\frac{1}{p}-\frac{p-1}{p(p-2)!}\right)^t \\
                                                    & =\left(1-\frac{1}{p}\right)^t\left(1-\frac{1}{n_1}\right)+\frac{n_2}{n_1(n_1+n_2)}\left(1-\frac{1}{p}-\frac{p-1}{p(p-2)!}\right)^t
        \end{align*}
        If \(|H\sigma H|=p^2\), then Lemma~\ref{lem:powers-of-P} gives \(P^t_\sigma(\tau) > \pi(\tau)\) if and only if \(\tau \in B\). Furthermore, for \(\tau \in B\)
        \begin{align*}
            P^t_\sigma(\tau)-\pi(\tau) & =\frac{n_1}{p^2n_2(n_1+n_2)}\left(1-\frac{1}{p}-\frac{p-1}{p(p-2)!}\right)^t.
        \end{align*}
        Since \(|B|=p^2n_2\), we have
        \[
            \Vert P^t_\sigma-\pi\Vert_{\mathrm{TV}} =\sum_{\tau \in B}\frac{n_1}{p^2n_2(n_1+n_2)}\left(1-\frac{1}{p}-\frac{p-1}{p(p-2)!}\right)^t=\frac{n_1}{n_1+n_2}\left(1-\frac{1}{p}-\frac{p-1}{p(p-2)!}\right)^t.
        \]
    \end{proof}
\end{appendix}
\end{document}